\theoremstyle{plain}
\newtheorem{theorem}{Theorem}[section]
\newtheorem{lemma}[theorem]{Lemma}
\newtheorem{corollary}[theorem]{Corollary}
\newtheorem{proposition}[theorem]{Proposition}
\theoremstyle{definition}
\theoremstyle{remark}
    \DeclareMathOperator\supp{supp}
    \DeclareMathOperator\h{h}
    \DeclareMathOperator\ih{ih}
    \DeclareMathOperator\Fuj{Fuj}
    \DeclareMathOperator\Str{Str}
    \DeclareMathOperator\Huy{Huy}
    \DeclareMathOperator\nHuy{nHuy}
\begin{document}

\title{Integral representation formulae for the solution of a wave equation with time-dependent damping and mass in the scale-invariant case}

\author{Alessandro Palmieri \footnote{email: alessandro.palmieri.math@gmail.com}}

\date{\small{ Department of Mathematics, University of Pisa, Largo B. Pontecorvo 5, 56127 Pisa, Italy} \\ [2ex] \normalsize{\today} }
\maketitle

\begin{abstract}
This paper is devoted to derive integral representation formulae for the solution of an inhomogeneous linear wave equation with time-dependent damping and mass terms, that are scale-invariant with respect the so-called hyperbolic scaling. Yagdjian's integral transform approach is employed for this purpose. The main step in our argument consists in determining the kernel functions for the different integral terms, which are related to the source term and to initial data. We will start with the one dimensional case (in space). We point out that we may not apply in a straightforward way Duhamel's principle to deal with the source term since the coefficients of lower order terms make our model not invariant by time translation. On the contrary, we shall begin with the representation formula for the inhomogeneous equation with vanishing data by using a revised Duhamel's principle. Then, we will derive the representation of the solution in the homogeneous case with nontrivial data. After deriving the formula in the one dimensional case, the classical approach by  spherical means is used in order to deal with the odd dimensional case. Finally, using the method of descent, the representation formula in the even dimensional case is proved. 
\end{abstract}

\begin{flushleft}
\textbf{Keywords} Integral transform, Hypergeometric function, Spherical means, Method of descent, Wave equation, Time-dependent and scale-invariant lower order terms
\end{flushleft}

\begin{flushleft}
\textbf{AMS Classification (2010)} Primary:  35A08, 35C15; Secondary: 33C05, 35L05, 35L15
\end{flushleft}

\section{Introduction}

In the last years, several papers have been devoted to the study of the semilinear wave equations (and weakly coupled systems) with time-dependent damping and mass and power nonlinearity in the scale-invariant case, namely,
\begin{align}\label{semilinear CP}
\begin{cases} u_{tt}-\Delta u +\frac{\mu}{1+t}u_t+\frac{\nu^2}{(1+t)^2}u=|u|^p, &  x\in \mathbb{R}^n, \ t>0,\\
u(0,x)=\varepsilon u_0(x), & x\in \mathbb{R}^n, \\ u_t(0,x)=\varepsilon u_1(x), & x\in \mathbb{R}^n,
\end{cases}
\end{align} where $\mu,\nu^2$ are non negative constants, $p>1$ and $\varepsilon$ is positive constant describing the smallness of Cauchy data (cf. \cite{Abb15,Wak14,DLR15,DabbLuc15,Wak16,LTW17,IS17,TL1709,KatoSak18} for the massless case and \cite{ChenPal18,Pal19} for the weakly coupled system). If we introduce the quantity
\begin{align}
\delta\doteq (\mu-1)^2-4 \nu^2, \label{def delta}
\end{align} then, it is known that the critical exponent for \eqref{semilinear CP} depends on $\delta$. 

On the one hand, for $\delta\geqslant (n+1)^2$ the critical exponent is $p_{\Fuj}\left(n+\frac{\mu-1}{2}-\frac{\sqrt{\delta}}{2}\right)$, where $p_{\Fuj}(n)\doteq 1+\frac{2}{n}$ is the Fujita exponent, see \cite{NPR16,Pal17,PalRei17}. On the other hand, for $\delta$ nonnegative and sufficiently small (depending on the spatial dimension $n$), it has been proved that for any $1<p\leqslant p_{\Str}(n+\mu)$ local in time solutions of \eqref{semilinear CP} blow up in finite times under suitable integral sign assumptions on initial data (see \cite{PalRei18,PT18}). Here $p_{\Str}(n)$ denotes the Strauss exponent, that is the positive root of the quadratic equation $(n-1)p^2-(n+1)p-2=0$. The global in time existence of small data solutions for $p> p_{\Str}(n+\mu)$ has been proved only in the special case $\delta=1$ for radial symmetric solutions and for dimensions $n\geqslant 3$ (cf. \cite{Pal18odd} for the case $n$ odd and \cite{Pal18even} for the case $n$ even). However, in the general case $\delta \neq 1$, the global in time existence of small data solutions for $p> p_{\Str}(n+\mu)$ is still open. Furthermore, in the case $\delta<0$ both the blow-up part and the global (in time) existence part are open, although a partial result is proved for the necessity part in \cite{DabbPal18}.

In the proof of the global existence results for $\delta\geqslant (n+1)^2$ (when the critical exponent is the shift of Fujita exponent), $L^2-L^2$ estimates for the solution of the corresponding homogeneous equation and for its derivatives play a fundamental role. In particular, these estimates are derived by using the explicit representation formula of the fundamental solutions of the corresponding homogeneous problem, which contains in their expression some cylindrical functions due to the scale-invariance of the model. In other terms, it is used an approach based on Fourier integral operators.

In some sense, the fact that the above mentioned shift of Fujita exponent is critical for large values of $\delta$ can be proved by using tools which are suitable for the semilinear classical damped wave equation, such as $L^2-L^2$ decay estimates with additional regularity for initial data for the global existence part or scaling arguments for the blow-up part (namely, the so-called test function method, cf. \cite{MPbook}). Unfortunately, this tools are not suitable when the behavior of the semilinear model in \eqref{semilinear CP} is closer to the semilinear wave equation ($\delta$ nonnegative and ``small'') and we expect to find as critical exponent a shift of Strauss  exponent. Therefore, it might be useful to derive results and tools which are widely employed to deal with the classical wave equation. 

After this preface, we understand why it could be useful to derive an explicit integral representation formula for the solution of the linear wave equation with time-dependent damping and mass term in the scale-invariant case.
More specifically, in this paper we will derive an explicit representation formula for the solution of the linear Cauchy problem 
\begin{align}\label{inhomog CP}
\begin{cases} u_{tt}-\Delta u +\frac{\mu}{1+t}u_t+\frac{\nu^2}{(1+t)^2}u=f(t,x), &  x\in \mathbb{R}^n, \ t>0,\\
u(0,x)=u_0(x), & x\in \mathbb{R}^n, \\ u_t(0,x)=u_1(x), & x\in \mathbb{R}^n,
\end{cases}
\end{align} where $\mu,\nu^2$ are non negative constants. 

In the series of papers \cite{Yag04,Yag07,YagGal08,Yag09,YagGal09,Yag10,Yag13,Yag15,Yag15MN}, many representations formulae for solutions of Cauchy problems for linear hyperbolic PDEs with variable coefficients have been derived. The general scheme is substantially the same: the representation formula is obtained considering the composition of two operators. The external operator is an integral transformation, whose kernel is determined by the time-dependent coefficients and/or by lower order terms, while the internal operator is a solution operator for a family of parameter dependent Cauchy problems (this is somehow a \emph{revised Duhamel's priciple}). In particular, if the considered PDE is a wave equation with time-dependent speed of propagation, then, this solution operator maps a given function into the solution of the Cauchy problem for the classical free wave equation with the given function as first initial data and with vanishing second initial data.

Using \emph{Yagdjian's Integral Transform approach}, we will provide an explicit representation formula for the solution of \eqref{inhomog CP} in all spatial dimensions. More specifically, we begin by studying the one-dimensional case; then, we get the representation formula for odd dimensions via spherical means' method and, finally, by method of descent we find the representation formula for even dimensions.

Let us state the main results of this paper. We start with the case $n=1$. 

\begin{theorem} \label{Thm representation formula 1d case}
Let $n=1$ and let $\mu,\nu^2$ be nonnegative constants. Let us assume $f\in \mathcal{C}^{0,1}_{t,x}([0,\infty)\times \mathbb{R})$ and $u_0\in \mathcal{C}^2(\mathbb{R})$, $u_1\in \mathcal{C}^1(\mathbb{R})$. Then, a representation formula for the solution of \eqref{inhomog CP} is given by
\begin{align}
u(t,x) &= \frac{1}{2}(1+t)^{-\frac{\mu}{2}}\big(u_0(x+t)+u_0(x-t)\big)+\frac{1}{2^{\sqrt{\delta}}}\int_{x-t}^{x+t} u_0(y) K_0(t,x;y;\mu,\nu^2)\, \mathrm{d}y \notag \\ & \quad +\frac{1}{2^{\sqrt{\delta}}}\int_{x-t}^{x+t}\big(u_1(y)+\mu \, u_0(y)\big) K_1(t,x;y;\mu,\nu^2)\, \mathrm{d}y  +\frac{1}{2^{\sqrt{\delta}}} \int_0^t \int_{x-t+b}^{x+t-b} f(b,y) E(t,x;b,y;\mu,\nu^2) \, \mathrm{d}y\, \mathrm{d}b, \label{representation formula 1d case}
\end{align} where the kernel functions are defined as follows
\begin{align}
E(t,x;b,y;\mu,\nu^2) & \doteq (1+t)^{-\frac{\mu}{2}+\frac{1-\sqrt{\delta}}{2}} (1+b)^{\frac{\mu}{2}+\frac{1-\sqrt{\delta}}{2}} \left((t+b+2)^2-(y\!-\!x)^2\right)^{\frac{\sqrt{\delta}-1}{2}}  \mathsf{F}\left(\tfrac{1-\sqrt{\delta}}{2},\tfrac{1-\sqrt{\delta}}{2};1; \tfrac{(t-b)^2-(y-x)^2}{(t+b+2)^2-(y-x)^2} \right), \label{def E(t,x;b,y)}\\
K_0(t,x;y;\mu,\nu^2) & \doteq -\frac{\partial}{\partial b}\, E(t,x;b,y;\mu,\nu^2) \Big|_{b=0}, \label{def K0(t,x;y)}\\
K_1(t,x;y;\mu,\nu^2) & \doteq  E(t,x;0,y;\mu,\nu^2) \label{def K1(t,x;y)}
\end{align} and $\mathsf{F}(\alpha,\beta;\gamma; z)$ denotes Gauss hypergeometric function.
\end{theorem}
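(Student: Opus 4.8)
The plan is to remove the damping through the Liouville-type substitution $u(t,x)=(1+t)^{-\mu/2}v(t,x)$, which turns \eqref{inhomog CP} into
\[
v_{tt}-v_{xx}+\frac{1-\delta}{4}\,\frac{1}{(1+t)^2}\,v=(1+t)^{\mu/2}f,\qquad v(0,\cdot)=u_0,\quad v_t(0,\cdot)=u_1+\tfrac{\mu}{2}u_0,
\]
where the coefficient $\tfrac{1-\delta}{4}$ is forced by the definition \eqref{def delta}. Thus the first-order term disappears and one is left with the free wave operator perturbed by a scale-invariant mass. Writing $\gamma\doteq\tfrac{1-\sqrt\delta}{2}$, the exponents $\gamma$ and $1-\gamma$ are the two roots of $\lambda^2-\lambda+\tfrac{1-\delta}{4}=0$; these are the indicial exponents of the singular potential and dictate the powers appearing in the kernel. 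After establishing the representation for $v$, one multiplies by $(1+t)^{-\mu/2}$ to recover $u$.

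For the source I would invoke the revised Duhamel principle: the solution of the reduced problem with vanishing data is $\int_0^t G(t,b)\big[(1+b)^{\mu/2}f(b,\cdot)\big]\,\mathrm{d}b$, where $G(t,b)[h]$ solves the reduced homogeneous equation for $\tau>b$ with data $(0,h)$ at $\tau=b$; this is legitimate despite the lost time-translation invariance and realizes Yagdjian's integral transform, whose kernel is the kernel of $G$. In characteristic coordinates $\xi=x+t$, $\eta=x-t$ the reduced operator becomes $\partial_\xi\partial_\eta$ up to a zeroth-order potential depending only on $\xi-\eta$, so the corresponding Riemann function is self-similar: after extracting the powers fixed by the indicial exponents $\gamma$ and $1-\gamma$, it depends on $(t,x;b,y)$ only through $z=\tfrac{(t-b)^2-(y-x)^2}{(t+b+2)^2-(y-x)^2}$, and the remaining factor $\Phi$ solves Gauss's hypergeometric equation with parameters $\gamma,\gamma,1$. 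Its branch regular at $z=0$---the value reached on the characteristics $|y-x|=t-b$---is $\Phi=\mathsf F(\gamma,\gamma;1;z)$. Reinstating the weight $(1+t)^{-\mu/2}(1+b)^{\mu/2}$ from the substitution yields exactly $E$ in \eqref{def E(t,x;b,y)}, while the normalization is pinned down by matching the free case on the characteristics: there $\mathsf F(\gamma,\gamma;1;0)=1$ and the identity $\tfrac{(1+t)(1+b)}{(t+b+2)^2-(y-x)^2}=\tfrac{1-z}{4}$ shows the prefactor equals $2^{\sqrt\delta-1}$, which the constant $\tfrac{1}{2^{\sqrt\delta}}$ compensates to restore the classical d'Alembert factor $\tfrac12$.

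The homogeneous part with data comes from the same operator. Since $-\partial_b G(t,b)$ is exactly the solution operator for data $(\,\cdot\,,0)$, the reduced homogeneous solution with data $(v_0,v_1)$ equals $-\partial_b G(t,b)[v_0]\big|_{b=0}+G(t,0)[v_1]$, which is the source of the relations $K_1=E|_{b=0}$ and $K_0=-\partial_b E|_{b=0}$ in \eqref{def K0(t,x;y)}--\eqref{def K1(t,x;y)}. Carrying the reduced velocity $v_1=u_1+\tfrac{\mu}{2}u_0$ through this formula and using that $E$ contains the factor $(1+b)^{\mu/2}$, the $b$-derivative generates an extra $\tfrac{\mu}{2}u_0$; regrouping the $u_0$ contributions turns the natural combination $u_1+\tfrac{\mu}{2}u_0$ into $u_1+\mu u_0$ multiplying $K_1$, the remainder being absorbed into $K_0$, exactly as in \eqref{representation formula 1d case}. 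The endpoint contribution at $z=0$ of $-\partial_b G(t,b)[u_0]\big|_{b=0}$ produces the leading term $\tfrac12(1+t)^{-\mu/2}\big(u_0(x+t)+u_0(x-t)\big)$.

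The main obstacle is the derivation of the source kernel: reducing the self-similar potential to Gauss's hypergeometric equation and isolating the correct branch and normalization. With $E$ in hand the remaining checks are routine but delicate: differentiating \eqref{representation formula 1d case} under the integral sign produces boundary terms on the moving endpoints $x\pm(t-b)$ and $x\pm t$, and one must verify---using $\mathsf F(\gamma,\gamma;1;0)=1$, the contiguous relations for $\mathsf F$, and the identities $K_1=E|_{b=0}$, $K_0=-\partial_b E|_{b=0}$---that these assemble the prescribed Cauchy data while the interior terms cancel the potential, so that \eqref{inhomog CP} holds classically. The hypotheses $f\in\mathcal C^{0,1}_{t,x}$, $u_0\in\mathcal C^2$ and $u_1\in\mathcal C^1$ are exactly what legitimize these differentiations.
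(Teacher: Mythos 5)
Your route --- a Liouville transform to remove the damping, Riemann's method for the reduced mass-only equation, non-autonomous Duhamel for the source, and the $-\partial_b G$ identity for the data --- is genuinely different from the paper's, which never transforms the equation: there, $E$ is taken as given, Proposition \ref{Prop E is a fund sol} verifies by direct computation that it solves the equation in $(t,x)$, Corollary \ref{Cor E is a fund sol adj eq} that it solves the adjoint equation in $(b,y)$, Subsection \ref{Subsection inhom 1d CP vanishing data} checks the source formula by differentiating under the integral sign, and Subsection \ref{Subsection hom 1d CP} derives the homogeneous case from the inhomogeneous one through the auxiliary function $w=u-u_0-tu_1$ rather than through your $-\partial_b G$ argument. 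Several of your structural claims are in fact consistent with the paper's formulas: the reduced mass coefficient $\tfrac{1-\delta}{4}$; the identity $\tfrac{(1+t)(1+b)}{(t+b+2)^2-(y-x)^2}=\tfrac{1-z}{4}$, which is \eqref{z(1-z) formula}; the value $2^{\sqrt{\delta}-1}(1+t)^{-\mu/2}(1+b)^{\mu/2}$ of $E$ on the characteristics, which is \eqref{value E for y=x pm(t-b)} and fixes the constant $2^{-\sqrt{\delta}}$; and the regrouping by which the reduced velocity $u_1+\tfrac{\mu}{2}u_0$, combined with the $b$-derivative of the weight $(1+b)^{\mu/2}$, produces the combination $u_1+\mu u_0$ against $K_1$ with exactly $K_0$ left against $u_0$.

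The genuine gap sits at the step you yourself flag as the main obstacle, and the justification you offer for it does not suffice. You infer that the Riemann function is self-similar from the fact that, in characteristic coordinates, the potential depends only on $\xi-\eta$. That is merely translation invariance in $x$; even adjoining the hyperbolic scaling $(1+t,1+b,y-x)\mapsto\lambda\,(1+t,1+b,y-x)$, under which the reduced equation and $z$ are invariant, a jointly invariant function of $(t,x;b,y)$ may still depend on two independent invariants (for instance $\tfrac{1+b}{1+t}$ and $\tfrac{y-x}{1+t}$), so no symmetry argument forces dependence on the single variable $z$ after extracting the indicial powers. Proving that the ansatz closes --- that the PDE collapses to an ODE in $z$ --- is precisely the computation carried out in Proposition \ref{Prop E is a fund sol}: one must show that the factors multiplying $\mathsf{F}_{zz}$, $\mathsf{F}_z$ and $\mathsf{F}$, namely \eqref{coefficient Fzz}, \eqref{coefficient Fz} and \eqref{coefficient F}, all carry the common factor $\tfrac{4(1+b)}{(1+t)[(t+b+2)^2-(y-x)^2]}$, whence the hypergeometric equation \eqref{ODE Hypergeo funct}. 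Your proposal asserts this reduction rather than proving it. The same applies to the closing step you defer as ``routine but delicate'': in the paper this is the entire content of Subsections \ref{Subsection inhom 1d CP vanishing data}--\ref{Subsection hom 1d CP}, and it requires not just $\mathsf{F}\big(\tfrac{1-\sqrt{\delta}}{2},\tfrac{1-\sqrt{\delta}}{2};1;0\big)=1$ and contiguous relations but the specific characteristic cancellation identities of Lemma \ref{Lemma dt E-dx E + ... =0} and of \eqref{relation 2 dE/db -2 dE/dy} at the moving endpoints $y=x\pm(t-b)$. As written, your argument convincingly explains where $E$ comes from and why the constants appear as they do, but it does not yet prove that \eqref{representation formula 1d case} solves \eqref{inhomog CP}.
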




Before stating the representation formula in the multidimensional case, let us introduce the following notations: if $f=f(t,x)$ is defined for $t\geqslant 0, x\in\mathbb{R}^n$, then, we denote by $w[f]=w[f](t,x;b)$ the solution to the parameter dependent Cauchy problem for the free wave equation
\begin{align}\label{free wave eq CP parameter b}
\begin{cases} w_{tt}-\Delta w=0, &  x\in \mathbb{R}^n, \ t>0,\\
w(0,x)=f(b,x), & x\in \mathbb{R}^n, \\ w_t(0,x)=0, & x\in \mathbb{R}^n,
\end{cases}
\end{align}
with parameter $b\geqslant 0$. When the function $f$ depends only on the spatial variable, the Cauchy problem depends no longer on the parameter $b$, namely, if $\varphi=\varphi (x)$, then, $w[\varphi]=w[\varphi](t,x)$ denotes the solution to the Cauchy problem for the free wave equation
\begin{align}\label{free wave eq CP}
\begin{cases} w_{tt}-\Delta w=0, &  x\in \mathbb{R}^n, \ t>0,\\
w(0,x)=\varphi(x), & x\in \mathbb{R}^n, \\ w_t(0,x)=0, & x\in \mathbb{R}^n.
\end{cases}
\end{align}

Assuming that the function $f$ (resp. $\varphi$) is sufficiently smooth with respect to the spatial variable, then, the representation formula for $w[f]$ (resp. $w[\varphi]$) is well-known and depends on the parity of $n$ (see for example \cite[Section 2.4]{Evans}). More precisely, when $n\geqslant 3$ is an odd integer it holds
\begin{equation}
\begin{split}
w[f](t,x;b) & =\frac{1}{(n-2)!!} \bigg(\frac{\partial}{\partial t}\bigg) \bigg(\frac{1}{t}\frac{\partial}{\partial t}\bigg)^{\frac{n-3}{2}} \left(t^{n-2} \fint_{\partial B_t(x)} f(b,z) \,\mathrm{d}\sigma_z\right), \\  w[\varphi](t,x) & =\frac{1}{(n-2)!!} \bigg(\frac{\partial}{\partial t}\bigg) \bigg(\frac{1}{t}\frac{\partial}{\partial t}\bigg)^{\frac{n-3}{2}} \left(t^{n-2} \fint_{\partial B_t(x)} \varphi(z) \, \mathrm{d}\sigma_z\right),
\end{split} \label{wave sol operator n odd}
\end{equation} while if $n\geqslant 2$ is an even integer, then,
\begin{equation}
\begin{split}
w[f](t,x;b) & =\frac{1}{n!!} \bigg(\frac{\partial}{\partial t}\bigg) \bigg(\frac{1}{t}\frac{\partial}{\partial t}\bigg)^{\frac{n-2}{2}} \left(t^{n} \fint_{B_t(x)} \frac{f(b,z)}{(t^2-|z-x|^2)^{1/2}} \, \mathrm{d}z \right), \\  w[\varphi](t,x) & =\frac{1}{n!!} \bigg(\frac{\partial}{\partial t}\bigg) \bigg(\frac{1}{t}\frac{\partial}{\partial t}\bigg)^{\frac{n-2}{2}} \left(t^{n} \fint_{B_t(x)} \frac{\varphi(z)}{(t^2-|z-x|^2)^{1/2}} \, \mathrm{d}z \right),
\end{split} \label{wave sol operator n even}
\end{equation} where $\fint_A$ denotes the integral average over $A$ and $j !!$ is the double factorial, which is defined for any $j\in \mathbb{N},j\geq 1$ by
\begin{align*}
j !! \doteq \begin{cases} j(j-2)\cdots 1 & \mbox{if} \ \ j \ \ \mbox{is odd}, \\ j(j-2)\cdots 2 & \mbox{if} \ \ j \ \ \mbox{is even}. \end{cases}
\end{align*}

We may now state the representation formulae in the multidimensional case. We consider separately the case when $n$ is an odd integer and the case  when $n$ is an even integer.

\begin{theorem} \label{Thm representation formula n dimensional odd case} Let $n\geqslant 3$ be an odd integer and let $\mu,\nu^2$ be nonnegative constants. Let us assume $f\in \mathcal{C}^{\frac{n+1}{2}}([0,\infty)\times \mathbb{R}^n)$ and $u_0\in \mathcal{C}^{\frac{n+1}{2}+1}(\mathbb{R}^n)$, $u_1\in \mathcal{C}^{\frac{n+1}{2}}(\mathbb{R}^n)$. Then, a representation formula for the solution of \eqref{inhomog CP} is given by
\begin{align}
u(t,x)& = (1+t)^{-\frac{\mu}{2}} w[u_0](t,x) +\frac{1}{2^{\sqrt{\delta}-1}}\int_0^t w[u_0](s,x) K_0(t,0;s; \mu,\nu^2) \, \mathrm{d}s +\frac{1}{2^{\sqrt{\delta}-1}}\int_0^t w[u_1+\mu\,  u_0](s,x) K_1(t,0;s; \mu,\nu^2) \, \mathrm{d}s \notag \\ & \quad + \frac{1}{2^{\sqrt{\delta}-1}} \int_0^t \int_0^{t-b} w[f](s,x;b) E(t,0;b,s;\mu,\nu^2) \, \mathrm{d}s \, \mathrm{d}b, \label{representation formula n dimensional odd case}
\end{align} where $w[u_0],w[u_1+\mu\, u_0]$ and $w[f]$ are defined by \eqref{wave sol operator n odd}.
\end{theorem}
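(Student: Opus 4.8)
The plan is to reduce \eqref{inhomog CP} for odd $n$ to the one-dimensional problem already solved in Theorem~\ref{Thm representation formula 1d case} by the classical method of spherical means, and then to recover $u(t,x)$ from the resulting $1$D representation by a differentiation procedure at the center of the spheres. Since the coefficients $\tfrac{\mu}{1+t}$ and $\tfrac{\nu^2}{(1+t)^2}$ depend only on $t$, spherical averaging in $x$ commutes with all lower order terms. Hence, writing $M_g(t,x,r)\doteq\fint_{\partial B_r(x)}g(t,z)\,\mathrm{d}\sigma_z$ and applying the Darboux transform $\widetilde M_g(t,x,r)\doteq\big(\tfrac1r\tfrac{\partial}{\partial r}\big)^{\frac{n-3}{2}}\!\big(r^{n-2}M_g(t,x,r)\big)$, the function $V(t,x,r)\doteq\widetilde M_u(t,x,r)$ solves, for each fixed $x$, the genuinely one-dimensional Cauchy problem associated with \eqref{inhomog CP} in the variables $(t,r)$, with source $\widetilde M_f$, first datum $\widetilde M_{u_0}$ and second datum $\widetilde M_{u_1}$. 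The transform together with the reflection $r\mapsto-r$ makes $V$, $\widetilde M_{u_0}$, $\widetilde M_{u_1}$ and $\widetilde M_f$ odd in $r$; extending them oddly to $r\in\mathbb R$ is consistent with the $1$D equation, whose coefficients are even in $r$.

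First I would apply Theorem~\ref{Thm representation formula 1d case} verbatim to $V$, with $r$ in the role of the spatial variable and $(\widetilde M_{u_0},\widetilde M_{u_1}+\mu\,\widetilde M_{u_0},\widetilde M_f)$ in the roles of $(u_0,u_1+\mu u_0,f)$. Since $V(t,x,r)=(n-2)!!\,u(t,x)\,r+O(r^3)$ as $r\to0$, the solution is recovered through $u(t,x)=\tfrac{1}{(n-2)!!}\,\partial_r V(t,x,r)\big|_{r=0}$. The free-wave part of the $1$D formula, namely $\tfrac12(1+t)^{-\mu/2}\big(\widetilde M_{u_0}(x,r+t)+\widetilde M_{u_0}(x,r-t)\big)$, yields after this differentiation exactly $(1+t)^{-\mu/2}w[u_0](t,x)$, since $\partial_r$ at $r=0$ doubles the even function $\partial_\rho\widetilde M_{u_0}$ and $w[u_0](s,x)=\tfrac{1}{(n-2)!!}\partial_s\widetilde M_{u_0}(x,s)$ by \eqref{wave sol operator n odd}; this reproduces the first term of \eqref{representation formula n dimensional odd case}.

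The substance of the proof lies in converting the three $\rho$-integral terms of the $1$D formula into the $w[\cdot]$-integrals of \eqref{representation formula n dimensional odd case}. For a kernel $K\in\{K_0,K_1,E\}$, differentiating $\int_{r-t}^{r+t}\widetilde M_\varphi(x,\rho)\,K(t,r;\rho)\,\mathrm{d}\rho$ in $r$ and setting $r=0$ produces, via the Leibniz rule, a boundary contribution $\widetilde M_\varphi(x,t)K(t,0;t)-\widetilde M_\varphi(x,-t)K(t,0;-t)$ together with $\int_{-t}^{t}\widetilde M_\varphi(x,\rho)\,\partial_rK(t,r;\rho)\big|_{r=0}\,\mathrm{d}\rho$. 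Here I would exploit that every kernel depends on the spatial variables only through $(y-x)^2$, whence $\partial_rK=-\partial_\rho K$ identically, so that $K(t,0;\cdot)$ is even and $\partial_\rho K(t,0;\cdot)$ is odd in $\rho$. Using the oddness of $\widetilde M_\varphi$, the integrand becomes even, the domain folds as $\int_{-t}^{t}=2\int_0^t$ (this is the origin of the change $2^{\sqrt\delta}\to2^{\sqrt\delta-1}$), and an integration by parts in $\rho$ replaces $-\partial_\rho K$ by $K$ at the cost of a boundary term. The key point is that this integration-by-parts boundary term is precisely $-2\,\widetilde M_\varphi(x,t)K(t,0;t)$, which cancels the Leibniz boundary contribution $2\,\widetilde M_\varphi(x,t)K(t,0;t)$; after the cancellation one is left, using $\partial_\rho\widetilde M_\varphi(x,\rho)=(n-2)!!\,w[\varphi](\rho,x)$, with $\tfrac{1}{2^{\sqrt\delta-1}}\int_0^t w[\varphi](s,x)\,K(t,0;s)\,\mathrm{d}s$. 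Taking $\varphi=u_0$ (for $K_0$) and $\varphi=u_1+\mu u_0$ (for $K_1$) gives the second and third terms of \eqref{representation formula n dimensional odd case}, and the same computation applied $b$-slicewise to the Duhamel double integral, with $\widetilde M_f$ in place of $\widetilde M_\varphi$ and the inner domain $[r-t+b,r+t-b]$, produces the last term.

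I expect the delicate cancellation of the two families of boundary terms to be the main obstacle: it forces one to track the exact constants and parities throughout, and it is where the specific algebraic structure of the kernels in \eqref{def E(t,x;b,y)}--\eqref{def K1(t,x;y)} (dependence on $(y-x)^2$ and the finiteness of $E$ on the light cone $|y-x|=t-b$, where $(t+b+2)^2-(y-x)^2=4(b+1)(t+1)>0$) is genuinely used. A secondary, routine task is to verify that the regularity assumptions $f\in\mathcal C^{\frac{n+1}{2}}$, $u_0\in\mathcal C^{\frac{n+1}{2}+1}$, $u_1\in\mathcal C^{\frac{n+1}{2}}$ guarantee that all spherical means, their Darboux transforms, and the derivatives appearing above exist and are continuous, so that Theorem~\ref{Thm representation formula 1d case} indeed applies to the transformed $1$D data and source.
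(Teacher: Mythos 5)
Your proposal is correct, and its overall architecture is exactly that of the paper: spherical means composed with the Darboux-type operator $\big(\tfrac1r\tfrac{\partial}{\partial r}\big)^{\frac{n-3}{2}}\big(r^{n-2}\,\cdot\,\big)$ (the paper's $\Omega_r$), reduction to the one-dimensional problem \eqref{Cauchy problem for the spherical mean} via odd extension in $r$, application of Theorem \ref{Thm representation formula 1d case}, and recovery of $u$ from the transformed solution at $r=0$. The one point where you genuinely diverge is the mechanics of that recovery. You differentiate under the integral sign at $r=0$ (Leibniz rule with moving endpoints), fold the domain using the parities ($\widetilde M_\varphi$ odd, $K(t,0;\cdot)$ even, $\partial_r K=-\partial_\rho K$), and then integrate by parts in $\rho$ to transfer the derivative from the kernel onto $\widetilde M_\varphi$; this generates the boundary term $-2\widetilde M_\varphi(x,t)K(t,0;t)$, which must cancel the Leibniz contribution $+2\widetilde M_\varphi(x,t)K(t,0;t)$ --- a cancellation you correctly identify, and which indeed holds because the term at $\rho=0$ vanishes by oddness and the kernels are finite on the light cone (cf. \eqref{value E for y=x pm(t-b)}). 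The paper instead computes $\lim_{r\to0}\widetilde v(r,t;x)/r$ after first shifting the integration variable $s\mapsto s+r$: the shifted kernel $K(t,r;s+r)$ is even in $s$, the fold then produces the symmetric difference quotient $\tfrac1r\big[\Omega_{s+r}[\varphi](x)-\Omega_{s-r}[\varphi](x)\big]$, and passing to the limit places the derivative directly on $\Omega_s[\varphi]$, so no integration by parts is performed and no boundary terms ever arise. Both routes are valid and yield \eqref{representation formula n dimensional odd case} with the same bookkeeping of the constant $2^{\sqrt\delta}\to 2^{\sqrt\delta-1}$; the paper's shift-then-fold device buys a cleaner computation with nothing to cancel, while your version makes explicit where the smoothness and finiteness of the kernels up to the light cone are actually used.
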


\begin{theorem} \label{Thm representation formula n dimensional even case} Let $n\geqslant 2$ be an even integer and let $\mu,\nu^2$ be nonnegative constants. Let us assume $f\in \mathcal{C}^{\frac{n}{2}+1}([0,\infty)\times \mathbb{R}^n)$ and $u_0\in \mathcal{C}^{\frac{n}{2}+2}(\mathbb{R}^n)$, $u_1\in \mathcal{C}^{\frac{n}{2}+1}(\mathbb{R}^n)$. Then, a representation formula for the solution of \eqref{inhomog CP} is given by \eqref{representation formula n dimensional odd case}, but with $w[u_0],w[u_1+\mu\, u_0]$ and $w[f]$ defined in this case by \eqref{wave sol operator n even}.
\end{theorem}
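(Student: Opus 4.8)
The plan is to deduce the even-dimensional formula from the already-established odd-dimensional one (Theorem~\ref{Thm representation formula n dimensional odd case}) by the classical \emph{method of descent}. The crucial structural observation is that the kernel functions $E$, $K_0$, $K_1$ introduced in \eqref{def E(t,x;b,y)}--\eqref{def K1(t,x;y)} depend only on the scalar variables $t,b,y$ (here appearing as $E(t,0;b,s)$ and $K_j(t,0;s)$) together with the parameters $\mu,\nu^2$, and carry \emph{no} dependence on the spatial dimension $n$. Consequently the only object in \eqref{representation formula n dimensional odd case} that is genuinely dimension-sensitive is the free-wave solution operator $w[\,\cdot\,]$, so passing from odd to even dimensions reduces to replacing the $(n+1)$-dimensional operator \eqref{wave sol operator n odd} by its $n$-dimensional even counterpart \eqref{wave sol operator n even} --- which is exactly what descent achieves for the free wave equation.

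First I would lift the problem to dimension $n+1$. Let $u$ be the solution of the $n$-dimensional problem \eqref{inhomog CP}, and, writing $\tilde x=(x,x_{n+1})\in\mathbb{R}^{n+1}$, define $\tilde u(t,\tilde x)\doteq u(t,x)$ together with $\tilde f(t,\tilde x)\doteq f(t,x)$, $\tilde u_0(\tilde x)\doteq u_0(x)$ and $\tilde u_1(\tilde x)\doteq u_1(x)$, all independent of $x_{n+1}$. Because $\tilde u$ is constant in $x_{n+1}$ we have $\Delta_{n+1}\tilde u=\Delta_n u$, while the damping and mass coefficients depend only on $t$; hence $\tilde u$ solves the $(n+1)$-dimensional Cauchy problem of the form \eqref{inhomog CP} with data $\tilde u_0,\tilde u_1$ and source $\tilde f$. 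The regularity bookkeeping matches perfectly: since $n+1$ is odd, Theorem~\ref{Thm representation formula n dimensional odd case} applied in dimension $n+1$ requires $\tilde f\in\mathcal{C}^{\frac{(n+1)+1}{2}}=\mathcal{C}^{\frac{n}{2}+1}$, $\tilde u_0\in\mathcal{C}^{\frac{n}{2}+2}$ and $\tilde u_1\in\mathcal{C}^{\frac{n}{2}+1}$, precisely the hypotheses assumed in the present statement.

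Next I would apply Theorem~\ref{Thm representation formula n dimensional odd case} to $\tilde u$ in $\mathbb{R}^{n+1}$, obtaining formula \eqref{representation formula n dimensional odd case} for $\tilde u(t,\tilde x)$ with the same kernels $E,K_0,K_1$ and with $w[\,\cdot\,]$ denoting the $(n+1)$-dimensional operator \eqref{wave sol operator n odd}. Restricting to $x_{n+1}=0$ (so that $\tilde u(t,(x,0))=u(t,x)$) and invoking the method of descent for the free wave equation --- the classical passage from \eqref{wave sol operator n odd} in dimension $n+1$ to \eqref{wave sol operator n even} in dimension $n$, whereby the spherical average over $\partial B_t(\tilde x)\subset\mathbb{R}^{n+1}$ of an $x_{n+1}$-independent integrand collapses to the weighted solid integral over $B_t(x)\subset\mathbb{R}^n$ against $(t^2-|z-x|^2)^{-1/2}$ (cf. \cite[Section 2.4]{Evans}) --- each $(n+1)$-dimensional operator $w[\,\cdot\,]$ is turned into the $n$-dimensional even operator \eqref{wave sol operator n even}. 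Applied to $w[\tilde u_0]=w[u_0]$, $w[\tilde u_1+\mu\tilde u_0]=w[u_1+\mu u_0]$ and $w[\tilde f]=w[f]$, this transforms \eqref{representation formula n dimensional odd case} into exactly the asserted representation.

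The representation-formula manipulation is essentially routine, since the kernels transfer verbatim; the only point requiring care is the descent step itself. One must verify that the surface average in $\mathbb{R}^{n+1}$ reduces to the correct weighted ball integral in $\mathbb{R}^n$, the Jacobian of the two-hemisphere projection producing the weight $(t^2-|z-x|^2)^{-1/2}$ and the change of the normalization constant from $(n-1)!!$ (the value of the odd-dimensional constant $(m-2)!!$ for $m=n+1$) to $n!!$. As this is precisely the known equivalence between \eqref{wave sol operator n odd} and \eqref{wave sol operator n even}, it can be quoted rather than reproved, so the only genuinely new ingredient beyond the odd-dimensional theorem is the dimension-independence of $E,K_0,K_1$, which confines the entire effect of descent to the free-wave operators.
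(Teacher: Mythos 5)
Your proposal is correct and follows essentially the same route as the paper: trivial extension of $u$, data and source to $\mathbb{R}^{n+1}$, application of Theorem~\ref{Thm representation formula n dimensional odd case} in the odd dimension $n+1$, restriction to $\{x_{n+1}=0\}$, and reduction of the $(n+1)$-dimensional spherical means to the weighted ball integrals of \eqref{wave sol operator n even}, exploiting that the kernels $E,K_0,K_1$ are dimension-independent. The only difference is presentational: the paper carries out the descent identity explicitly (hemisphere parametrization, Jacobian weight $(r^2-|z-x|^2)^{-1/2}$, and the Gamma-function computation giving $\tfrac{2\omega_{n-1}}{\omega_n n}=\tfrac{(n-1)!!}{n!!}$), whereas you quote it from \cite[Section 2.4]{Evans}, which is legitimate since it is exactly the classical free-wave descent.
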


The paper is organized as follows: in Section \ref{Section n=1} we prove Theorem \ref{Thm representation formula 1d case} considering first the inhomogeneous problem with vanishing data and, then, we use this case to study the corresponding homogeneous problem with nontrivial data; in Section \ref{Section n odd} we consider the odd dimensional case and we prove Theorem \ref{Thm representation formula n dimensional odd case}; in particular, we use the method of spherical means to associate this case to the one-dimensional one; in Section \ref{Section n even} we consider the even dimensional case and we use the method of descent so that we reduce the problem to the one considered in Section \ref{Section n odd}; finally, in Section \ref{Section final rem} we point out some final remarks to our results and the relations of \eqref{representation formula 1d case} and \eqref{representation formula n dimensional odd case} with the representation formulae for other models with variable coefficients.

\section{One dimensional case} \label{Section n=1}

In this section we will prove Theorem \ref{Thm representation formula 1d case}. Since the Cauchy problem \eqref{inhomog CP} is linear, we may consider separately the case with vanishing initial data and the homogeneous case. In particular, we will show that
\begin{align}
u^{\ih}=u^{\ih}(t,x)=\frac{1}{2^{\sqrt{\delta}}} \int_0^t \int_{x-t+b}^{x+t-b} f(b,y) E(t,x;b,y;\mu,\nu^2) \, \mathrm{d}y\, \mathrm{d}b,  \label{inhomogeneous sol 1d}
\end{align} solves
\begin{align}\label{inhomog CP vanishing data 1d}
\begin{cases} u_{tt}- u_{xx} +\frac{\mu}{1+t}u_t+\frac{\nu^2}{(1+t)^2}u=f(t,x), &  x\in \mathbb{R}, \ t>0,\\
u(0,x)=0, & x\in \mathbb{R}, \\ u_t(0,x)=0, & x\in \mathbb{R},
\end{cases}
\end{align} while
\begin{align}
u^{\h}=u^{\h}(t,x) & =\frac{1}{2}(1+t)^{-\frac{\mu}{2}}\big(u_0(x+t)+u_0(x-t)\big)+\frac{1}{2^{\sqrt{\delta}}}\int_{x-t}^{x+t} u_0(y) K_0(t,x;y;\mu,\nu^2)\, \mathrm{d}y \notag \\ & \quad +\frac{1}{2^{\sqrt{\delta}}}\int_{x-t}^{x+t}\big(u_1(y)+\mu \, u_0(y)\big) K_1(t,x;y;\mu,\nu^2)\, \mathrm{d}y  \label{homogeneous sol 1d}
\end{align} solves
\begin{align}\label{homog CP 1d}
\begin{cases} u_{tt}- u_{xx} +\frac{\mu}{1+t}u_t+\frac{\nu^2}{(1+t)^2}u=0, &  x\in \mathbb{R}, \ t>0,\\
u(0,x)=u_0(x), & x\in \mathbb{R}, \\ u_t(0,x)=u_1(x), & x\in \mathbb{R}.
\end{cases}
\end{align}

The remaining part of the section is organized as follows: in Subsection \ref{Subsection Kernel function E} we prove some fundamental properties of the kernel function $E=E(t,x;b,y)$; then, in Subsection \ref{Subsection inhom 1d CP vanishing data} we prove that $v^{\ih}$ solves \eqref{inhomog CP vanishing data 1d} in the classical sense (punctually); finally, in Subsection \ref{Subsection hom 1d CP} we use the representation formula for the inhomogeneous problem with vanishing data in the 1d case to derive a representation formula for the corresponding  homogeneous case.

\subsection{The kernel function and its properties} \label{Subsection Kernel function E}

In this subsection we investigate some properties of the kernel function $E=E(t,x;b,y)$. Let us begin by proving that $E$ is a solution of the corresponding homogeneous wave equation with scale-invariant damping and mass with respect to the variables $(t,x)$.

For the sake of readability,we introduce the function 
\begin{align} z=z(t,x;b,y) \doteq \frac{(t-b)^2-(y-x)^2}{(t+b+2)^2-(y-x)^2}. \label{def z=z(t,x;b,y)}
\end{align}

\begin{proposition}\label{Prop E is a fund sol} Let $b\in [0,t]$ and $y\in [x-t+b,x+t-b]$. Then,
\begin{align}
\left(\frac{\partial^2}{\partial t^2}-\frac{\partial^2}{\partial x^2}+\frac{\mu}{1+t}\frac{\partial}{\partial t}+\frac{\nu^2}{(1+t)^2}\right)E(t,x;b,y;\mu,\nu^2)=0.\label{formula E is a fund sol}
\end{align}
\end{proposition}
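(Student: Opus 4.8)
The plan is to peel off the explicit prefactors of $E$ one at a time by elementary substitutions, until the equation collapses onto the hypergeometric differential equation satisfied by $\mathsf{F}(\tfrac{1-\sqrt{\delta}}{2},\tfrac{1-\sqrt{\delta}}{2};1;z)$. Throughout, $b$ and $y$ are fixed, so the factor $(1+b)^{\frac{\mu}{2}+\frac{1-\sqrt{\delta}}{2}}$ is a constant that may be discarded. I set $\beta\doteq\tfrac{1-\sqrt{\delta}}{2}$, so that the exponent of $\big((t+b+2)^2-(y-x)^2\big)$ is $-\beta$ and the hypergeometric parameters become $(\beta,\beta;1)$. The first reduction removes the damping: writing $E=(1+t)^{-\mu/2}v$ and inserting this into the operator in \eqref{formula E is a fund sol}, a direct computation shows that the first-order term cancels and the zero-order coefficient becomes $-\tfrac{\mu^2}{4}+\tfrac{\mu}{2}+\nu^2=\tfrac{1-\delta}{4}$, by the definition \eqref{def delta} of $\delta$. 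Hence \eqref{formula E is a fund sol} is equivalent to the scale-invariant Klein--Gordon equation
\[
v_{tt}-v_{xx}+\frac{1-\delta}{4}\,\frac{1}{(1+t)^2}\,v=0,
\qquad
v=(1+t)^{\beta}\big((t+b+2)^2-(y-x)^2\big)^{-\beta}\,\mathsf{F}(\beta,\beta;1;z),
\]
and one notes the crucial algebraic coincidence $\tfrac{1-\delta}{4}=\beta(1-\beta)$.

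The second reduction introduces $T\doteq 1+t$ and $r\doteq y-x$ (so that $\partial_t=\partial_T$ and $\partial_x^2=\partial_r^2$) and strips off the mass power by setting $v=T^{\beta}w$. Since $\beta(\beta-1)+\beta(1-\beta)=0$, the zero-order term disappears entirely and one is reduced to the one-dimensional Euler--Poisson--Darboux equation
\[
w_{TT}-w_{rr}+\frac{2\beta}{T}\,w_T=0,
\qquad
w=D^{-\beta}\,\mathsf{F}(\beta,\beta;1;z),
\]
where $D\doteq(T+B)^2-r^2$ with $B\doteq 1+b$, and, by \eqref{def z=z(t,x;b,y)}, $z=\big((T-B)^2-r^2\big)/D=N/D$ with $N\doteq(T-B)^2-r^2$.

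The heart of the matter is verifying that $w=D^{-\beta}\mathsf{F}(\beta,\beta;1;z)$ solves this Euler--Poisson--Darboux equation. I would compute $w_T,w_{TT},w_r,w_{rr}$ by the chain rule, expressing everything through $\mathsf{F},\mathsf{F}',\mathsf{F}''$ evaluated at $z$ together with the elementary derivatives $D_T=2(T+B)$, $D_r=-2r$, $z_T$, $z_r$, and so on. The computation is streamlined by the identities $D-N=4TB$, whence $1-z=4TB/D$ and $z(1-z)=4TB\,N/D^2$, which repeatedly collapse the cross terms. After collecting, I expect the Euler--Poisson--Darboux operator applied to $w$ to reduce to
\[
(\text{nonzero factor})\times\Big[\,z(1-z)\,\mathsf{F}''+\big(1-(2\beta+1)z\big)\mathsf{F}'-\beta^2\mathsf{F}\,\Big],
\]
which is exactly the hypergeometric operator for the parameters $(\alpha,\beta;\gamma)=(\beta,\beta;1)$ and therefore vanishes.

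The main obstacle is precisely this last matching: one must check that the three coefficients produced by the operator are proportional, with a single common factor, to $z(1-z)$, $1-(2\beta+1)z$, and $-\beta^2$. The coefficient of $\mathsf{F}''$ is $D^{-\beta}(z_T^2-z_r^2)$ and must be shown proportional to $z(1-z)$; the coefficient of $\mathsf{F}'$ blends $z_{TT}-z_{rr}+\tfrac{2\beta}{T}z_T$ with the cross terms coming from differentiating $D^{-\beta}$, and must reproduce $1-(2\beta+1)z$ with the same factor. Keeping these organized is where the identities $D-N=4TB$ and $1-z=4TB/D$ are indispensable. Finally, I would record that for $b\in[0,t]$ and $y\in[x-t+b,x+t-b]$ one has $0\leqslant z<1$, so $\mathsf{F}(\beta,\beta;1;z)$ lies in the region where it is analytic and satisfies its defining hypergeometric equation, which legitimizes all the differentiations performed above.
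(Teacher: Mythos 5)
Your proposal is correct, and it reaches the same final ingredient as the paper --- Gauss's equation \eqref{ODE Hypergeo funct} --- by a genuinely different and cleaner decomposition. The paper differentiates $E$ exactly as written, prefactors included, collects the coefficients of $\mathsf{F}_{zz}$, $\mathsf{F}_z$ and $\mathsf{F}$ in \eqref{intermediate formula E is a fond sol}, and identifies them (formulas \eqref{coefficient Fzz}, \eqref{coefficient Fz}, \eqref{coefficient F}) with $z(1-z)$, $1-(2-\sqrt{\delta})z$ and $-\big(\tfrac{\sqrt{\delta}-1}{2}\big)^2$ times the common factor $\tfrac{4(1+b)}{(1+t)[(t+b+2)^2-(y-x)^2]}$, the definition \eqref{def delta} of $\delta$ entering through an inline cancellation of zero-order coefficients. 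You instead conjugate twice before differentiating: by $(1+t)^{-\mu/2}$, which trades damping plus mass for the single scale-invariant mass $\tfrac{1-\delta}{4}(1+t)^{-2}$ (your identity $-\tfrac{\mu^2}{4}+\tfrac{\mu}{2}+\nu^2=\tfrac{1-\delta}{4}=\beta(1-\beta)$ is right), and then by $T^{\beta}$, which removes the mass and leaves the Euler--Poisson--Darboux equation $w_{TT}-w_{rr}+\tfrac{2\beta}{T}w_T=0$ for $w=D^{-\beta}\mathsf{F}(\beta,\beta;1;z)$; this isolates the only use of \eqref{def delta} in one identity and explains structurally why a hypergeometric kernel appears. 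The one step you only describe rather than execute --- the Euler--Poisson--Darboux verification --- does go through exactly as you predict (in your notation $T=1+t$, $B=1+b$, $r=y-x$, $D=(T+B)^2-r^2$, $N=(T-B)^2-r^2$), with common nonzero factor $\tfrac{4B}{TD}$: from $z_T=4B(T^2-B^2+r^2)/D^2$, $z_r=-8rTB/D^2$ and the factorization $ND=(T^2-B^2+r^2)^2-4r^2T^2$ one gets $z_T^2-z_r^2=16B^2N/D^3=\tfrac{4B}{TD}\,z(1-z)$; from $D_T^2-D_r^2=4D$ and $D_{TT}-D_{rr}=4$ the $\mathsf{F}$-coefficient collapses to $-\tfrac{4B}{TD}\,\beta^2$; and $D-N=4TB$ reduces the $\mathsf{F}'$-coefficient to $\tfrac{4B}{TD}\big(1-(2\beta+1)z\big)$, which agrees with the paper's $1-(2-\sqrt{\delta})z$ since $2\beta+1=2-\sqrt{\delta}$. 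So your route is shorter and more conceptual; what the paper's monolithic computation buys in exchange is the explicit intermediate formula \eqref{dE/dt} for $\partial_t E$, which is reused in Lemma \ref{Lemma dt E-dx E + ... =0} and again in Subsection \ref{Subsection hom 1d CP}, whereas with your argument those expansions would have to be carried out separately when they are needed later.
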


\begin{proof} Let us remark that for $b\in [0,t]$ and $y\in [x-t+b,x+t-b]$ it holds $z=z(t,x;b,y)\in [0,1)$. In particular, we may compute the hypergeometric function in \eqref{def E(t,x;b,y)} without considering the analytic continuation. 
Let us begin by computing the derivatives of $E$ involved in \eqref{formula E is a fund sol}.

\begin{flushleft}
\emph{Representation of $\partial_t^2 E(t,x;b,y;\mu,\nu^2)$ and $\partial_t E(t,x;b,y;\mu,\nu^2)$}
\end{flushleft}

Using the identities
\begin{align*}
 \partial_t \big((t+b+2)^2-(y-x)^2 \big)^{\frac{\sqrt{\delta}-1}{2}} & =  (\sqrt{\delta}-1)(t+b+2) \big((t+b+2)^2-(y-x)^2 \big)^{\frac{\sqrt{\delta}-1}{2}-1}, \\
 \partial_t^2 \big((t+b+2)^2-(y-x)^2 \big)^{\frac{\sqrt{\delta}-1}{2}} & =  (\sqrt{\delta}-1) \big((t+b+2)^2-(y-x)^2 \big)^{\frac{\sqrt{\delta}-1}{2}-1}\\ & \quad +(\sqrt{\delta}-1) (\sqrt{\delta}-1-2) (t+b+2)^2\big((t+b+2)^2-(y-x)^2 \big)^{\frac{\sqrt{\delta}-1}{2}-2}, \\
 \partial_ t \mathsf{F}\left(\tfrac{1-\sqrt{\delta}}{2},\tfrac{1-\sqrt{\delta}}{2};1 ;z\right) &= \mathsf{F}_z\left(\tfrac{1-\sqrt{\delta}}{2},\tfrac{1-\sqrt{\delta}}{2};1 ;z\right) \frac{\partial z}{\partial t}, \\
 \\
 \partial_ t^2 \mathsf{F}\left(\tfrac{1-\sqrt{\delta}}{2},\tfrac{1-\sqrt{\delta}}{2};1 ;z\right) &=  \mathsf{F}_{zz}\left(\tfrac{1-\sqrt{\delta}}{2},\tfrac{1-\sqrt{\delta}}{2};1 ;z\right)  \bigg(\frac{\partial z}{\partial t}\bigg)^2 +  \mathsf{F}_{z}\left(\tfrac{1-\sqrt{\delta}}{2},\tfrac{1-\sqrt{\delta}}{2};1 ;z\right)  \frac{\partial^2 z}{\partial t^2},
\end{align*} we may calculate $\partial_t^2 E(t,x;b,y;\mu,\nu^2)$. Straightforward computations lead to 
\begin{align*}
\frac{\partial^2 E}{\partial t^2}(t,x;b,y;\mu,\nu^2) & = (1+b)^{\frac{\mu}{2}+\frac{1-\sqrt{\delta}}{2}} \partial_t^2 \Big( (1+t)^{-\frac{\mu}{2}+\frac{1-\sqrt{\delta}}{2}}  \left((t+b+2)^2-(y-x)^2\right)^{\frac{\sqrt{\delta}-1}{2}}  \mathsf{F}\Big(\tfrac{1-\sqrt{\delta}}{2},\tfrac{1-\sqrt{\delta}}{2};1; z\Big)\Big) \\
& = (1+b)^{\frac{\mu}{2}+\frac{1-\sqrt{\delta}}{2}} (1+t)^{-\frac{\mu}{2}+\frac{1-\sqrt{\delta}}{2}}  \left((t+b+2)^2-(y-x)^2\right)^{\frac{\sqrt{\delta}-1}{2}} \\
 & \quad \times  \bigg[   \mathsf{F}_{zz}\Big(\tfrac{1-\sqrt{\delta}}{2},\tfrac{1-\sqrt{\delta}}{2};1; z\Big) \, \bigg(\frac{\partial z}{\partial t}\bigg)^2 +   \mathsf{F}_{z}\Big(\tfrac{1-\sqrt{\delta}}{2},\tfrac{1-\sqrt{\delta}}{2};1; z\Big) \, \frac{\partial^2 z}{\partial t^2} \\ & \qquad \quad +  \big(-\tfrac{\mu}{2}+\tfrac{1-\sqrt{\delta}}{2}\big)\big(-\tfrac{\mu}{2}-1+\tfrac{1-\sqrt{\delta}}{2}\big) (1+t)^{-2}  \mathsf{F}\Big(\tfrac{1-\sqrt{\delta}}{2},\tfrac{1-\sqrt{\delta}}{2};1; z\Big) \\   
 & \qquad \quad + (\sqrt{\delta}-1)   \left((t+b+2)^2-(y-x)^2\right)^{-1}  \mathsf{F}\Big(\tfrac{1-\sqrt{\delta}}{2},\tfrac{1-\sqrt{\delta}}{2};1; z\Big)\\   
 & \qquad \quad + (\sqrt{\delta}-1) (\sqrt{\delta}-1-2)   \left((t+b+2)^2-(y-x)^2\right)^{-2} (t+b+2)^2\,   \mathsf{F}\Big(\tfrac{\sqrt{\delta}-1}{2},\tfrac{\sqrt{\delta}-1}{2};1; z\Big) \\   
  & \qquad \quad + 2 (-\tfrac{\mu}{2}+\tfrac{1-\sqrt{\delta}}{2})(\sqrt{\delta}-1) (1+t)^{-1}  \left((t+b+2)^2-(y-x)^2\right)^{-1} (t+b+2)\,   \mathsf{F}\Big(\tfrac{1-\sqrt{\delta}}{2},\tfrac{1-\sqrt{\delta}}{2};1; z\Big) \\   
  & \qquad \quad + 2 (-\tfrac{\mu}{2}+\tfrac{1-\sqrt{\delta}}{2})(1+t)^{-1}  \mathsf{F}_z\Big(\tfrac{1-\sqrt{\delta}}{2},\tfrac{1-\sqrt{\delta}}{2};1; z\Big) \, \frac{\partial z}{\partial t}  \\   
   & \qquad \quad +2 (\sqrt{\delta}-1) \left((t+b+2)^2-(y-x)^2\right)^{-1} (t+b+2) \, \mathsf{F}_z\Big(\tfrac{1-\sqrt{\delta}}{2},\tfrac{1-\sqrt{\delta}}{2};1; z\Big) \, \frac{\partial z}{\partial t} \bigg]
\end{align*} and, similarly, to 
\begin{align}
\frac{\partial E}{\partial t} (t,x;b,y;\mu,\nu^2) & = (1+b)^{\frac{\mu}{2}+\frac{1-\sqrt{\delta}}{2}} \partial_t \Big( (1+t)^{-\frac{\mu}{2}+\frac{1-\sqrt{\delta}}{2}}  \left((t+b+2)^2-(y-x)^2\right)^{\frac{\sqrt{\delta}-1}{2}}  \mathsf{F}\Big(\tfrac{1-\sqrt{\delta}}{2},\tfrac{1-\sqrt{\delta}}{2};1; z\Big)\Big) \notag\\
& = (1+b)^{\frac{\mu}{2}+\frac{1-\sqrt{\delta}}{2}} (1+t)^{-\frac{\mu}{2}+\frac{1-\sqrt{\delta}}{2}}  \left((t+b+2)^2-(y-x)^2\right)^{\frac{\sqrt{\delta}-1}{2}} \notag\\
 & \quad \times  \bigg[ \mathsf{F}_{z}\Big(\tfrac{1-\sqrt{\delta}}{2},\tfrac{1-\sqrt{\delta}}{2};1; z\Big) \, \frac{\partial z}{\partial t} + \big(-\tfrac{\mu}{2}+\tfrac{1-\sqrt{\delta}}{2}\big)(1+t)^{-1}  \mathsf{F}\Big(\tfrac{1-\sqrt{\delta}}{2},\tfrac{1-\sqrt{\delta}}{2};1; z\Big)\notag \\   
 & \qquad \quad + (\sqrt{\delta}-1)   \left((t+b+2)^2-(y-x)^2\right)^{-1} (t+b+2) \,  \mathsf{F}\Big(\tfrac{1-\sqrt{\delta}}{2},\tfrac{1-\sqrt{\delta}}{2};1; z\Big)  \bigg]. \label{dE/dt}
\end{align}

\begin{flushleft}
\emph{Representation of $\partial_x^2 E(t,x;b,y;\mu,\nu^2)$ }
\end{flushleft} 

In order to calculate the partial derivative $\partial_x^2 E(t,x;b,y;\mu,\nu^2)$, we will employ the following relations
\begin{align*}
 \partial_x \big((t+b+2)^2-(y-x)^2 \big)^{\frac{\sqrt{\delta}-1}{2}} & =  (\sqrt{\delta}-1)(y-x) \big((t+b+2)^2-(y-x)^2 \big)^{\frac{\sqrt{\delta}-1}{2}-1}, \\
 \partial_x^2 \big((t+b+2)^2-(y-x)^2 \big)^{\frac{\sqrt{\delta}-1}{2}} & = (\sqrt{\delta}-1) (\sqrt{\delta}-1-2) (y-x)^2\big((t+b+2)^2-(y-x)^2 \big)^{\frac{\sqrt{\delta}-1}{2}-2} \\ & \quad -(\sqrt{\delta}-1) \big((t+b+2)^2-(y-x)^2 \big)^{\frac{\sqrt{\delta}-1}{2}-1}, \\
 \partial_ x \mathsf{F}\left(\tfrac{1-\sqrt{\delta}}{2},\tfrac{1-\sqrt{\delta}}{2};1 ;z\right) &= \mathsf{F}_z\left(\tfrac{1-\sqrt{\delta}}{2},\tfrac{1-\sqrt{\delta}}{2};1 ;z\right) \frac{\partial z}{\partial x}, \\
 \\
 \partial_ x^2 \mathsf{F}\left(\tfrac{1-\sqrt{\delta}}{2},\tfrac{1-\sqrt{\delta}}{2};1 ;z\right) &=  \mathsf{F}_{zz}\left(\tfrac{1-\sqrt{\delta}}{2},\tfrac{1-\sqrt{\delta}}{2};1 ;z\right)  \bigg(\frac{\partial z}{\partial x}\bigg)^2 +  \mathsf{F}_{z}\left(\tfrac{1-\sqrt{\delta}}{2},\tfrac{1-\sqrt{\delta}}{2};1 ;z\right) \frac{\partial^2 z}{\partial x^2}.
\end{align*} Then,
\begin{align*}
\frac{\partial^2 E}{\partial x^2}(t,x;b,y;\mu,\nu^2) & = (1+b)^{\frac{\mu}{2}+\frac{1-\sqrt{\delta}}{2}} (1+t)^{-\frac{\mu}{2}+\frac{1-\sqrt{\delta}}{2}} \partial_x ^2 \Big( \left((t+b+2)^2-(y-x)^2\right)^{\frac{\sqrt{\delta}-1}{2}}  \mathsf{F}\Big(\tfrac{1-\sqrt{\delta}}{2},\tfrac{1-\sqrt{\delta}}{2};1; z\Big)\Big) \\
& = (1+b)^{\frac{\mu}{2}+\frac{1-\sqrt{\delta}}{2}} (1+t)^{-\frac{\mu}{2}+\frac{1-\sqrt{\delta}}{2}}  \left((t+b+2)^2-(y-x)^2\right)^{\frac{\sqrt{\delta}-1}{2}} \\
 & \quad \times  \bigg[   \mathsf{F}_{zz}\Big(\tfrac{1-\sqrt{\delta}}{2},\tfrac{1-\sqrt{\delta}}{2};1; z\Big) \, \bigg(\frac{\partial z}{\partial x}\bigg)^2 +   \mathsf{F}_{z}\Big(\tfrac{1-\sqrt{\delta}}{2},\tfrac{1-\sqrt{\delta}}{2};1; z\Big) \, \frac{\partial^2 z}{\partial x^2} \\   
 & \qquad \quad + (\sqrt{\delta}-1) (\sqrt{\delta}-1-2)   \left((t+b+2)^2-(y-x)^2\right)^{-2} (y-x)^2\,   \mathsf{F}\Big(\tfrac{1-\sqrt{\delta}}{2},\tfrac{1-\sqrt{\delta}}{2};1; z\Big) \\   
 & \qquad \quad - (\sqrt{\delta}-1)   \left((t+b+2)^2-(y-x)^2\right)^{-1}  \mathsf{F}\Big(\tfrac{1-\sqrt{\delta}}{2},\tfrac{1-\sqrt{\delta}}{2};1; z\Big) \\
   & \qquad \quad +2 (\sqrt{\delta}-1) \left((t+b+2)^2-(y-x)^2\right)^{-1} (y-x) \, \mathsf{F}_z\Big(\tfrac{1-\sqrt{\delta}}{2},\tfrac{1-\sqrt{\delta}}{2};1; z\Big) \, \dfrac{\partial z}{\partial x} \bigg].
\end{align*}

Combining now the expressions for the derivatives of $E$, we can now prove \eqref{formula E is a fund sol}. Collecting the similar terms, we get 
\begin{align}
& \frac{\partial^2 E}{\partial t^2}  (t,x;b,y;\mu,\nu^2)- \frac{\partial^2 E}{\partial x^2} (t,x;b,y;\mu,\nu^2) +\frac{\mu}{1+t} \frac{\partial E}{\partial t}(t,x;b,y;\mu,\nu^2) +\frac{\nu^2}{(1+t)^2}  E(t,x;b,y;\mu,\nu^2)\notag \\
& \quad = (1+b)^{\frac{\mu}{2}+\frac{1-\sqrt{\delta}}{2}} (1+t)^{-\frac{\mu}{2}+\frac{1-\sqrt{\delta}}{2}}  \left((t+b+2)^2-(y-x)^2\right)^{\frac{\sqrt{\delta}-1}{2}} \notag \\
 & \quad \quad \times  \bigg\{ \bigg[\bigg(\frac{\partial z}{\partial t}\bigg)^2-\bigg(\frac{\partial z}{\partial x}\bigg)^2\, \bigg] \mathsf{F}_{zz}\left(\tfrac{1-\sqrt{\delta}}{2},\tfrac{1-\sqrt{\delta}}{2};1 ;z\right) \notag \\ 
 & \qquad \qquad +\bigg[\frac{\partial^2 z}{\partial t^2}-\frac{\partial^2 z}{\partial x^2}+ \big(2\big(-\tfrac{\mu}{2}+\tfrac{1-\sqrt{\delta}}{2}\big)+\mu\big)(1+t) ^{-1}\frac{\partial z}{\partial t} \notag\\ 
 & \quad \qquad \qquad \ \ +2(\sqrt{\delta}-1) \left((t+b+2)^2-(y-x)^2\right)^{-1}\bigg((t+b+2)\,\frac{\partial z}{\partial t}-(y-x) \, \frac{\partial z}{\partial x}\bigg)\bigg] \mathsf{F}_{z}\left(\tfrac{1-\sqrt{\delta}}{2},\tfrac{1-\sqrt{\delta}}{2};1 ;z\right) \notag
 \\
 & \qquad \qquad +\bigg[\underbrace{\Big(\big(-\tfrac{\mu}{2}+\tfrac{1-\sqrt{\delta}}{2}\big)\big(-\tfrac{\mu}{2}-1+\tfrac{1-\sqrt{\delta}}{2}\big)+\mu \big(-\tfrac{\mu}{2}+\tfrac{1-\sqrt{\delta}}{2}\big)+\nu^2\Big)}_{=0} (1+t) ^{-2} \notag \\ 
 & \quad \qquad \qquad \ \ +\underbrace{\Big(2\big(-\tfrac{\mu}{2}+\tfrac{1-\sqrt{\delta}}{2}\big)(\sqrt{\delta}-1)+\mu (\sqrt{\delta}-1)\Big)}_{-(\sqrt{\delta}-1)^2}(1+t)^{-1}\left((t+b+2)^2-(y-x)^2\right)^{-1} (t+b+2) \notag
 \\ 
 & \quad \qquad \qquad \ \ +(\sqrt{\delta}-1)^2 \left((t+b+2)^2-(y-x)^2\right)^{-1} \bigg] \mathsf{F}\left(\tfrac{1-\sqrt{\delta}}{2},\tfrac{1-\sqrt{\delta}}{2};1 ;z\right)  \bigg\} \notag \\
 & \quad = (1+b)^{\frac{\mu}{2}+\frac{1-\sqrt{\delta}}{2}} (1+t)^{-\frac{\mu}{2}+\frac{1-\sqrt{\delta}}{2}}  \left((t+b+2)^2-(y-x)^2\right)^{\frac{\sqrt{\delta}-1}{2}} \notag \\
 & \quad \quad \times  \bigg\{ \bigg[\bigg(\frac{\partial z}{\partial t}\bigg)^2-\bigg(\frac{\partial z}{\partial x}\bigg)^2\, \bigg] \mathsf{F}_{zz}\left(\tfrac{1-\sqrt{\delta}}{2},\tfrac{1-\sqrt{\delta}}{2};1 ;z\right) \notag  \\ 
 & \qquad \qquad +\bigg[\frac{\partial^2 z}{\partial t^2}-\frac{\partial^2 z}{\partial x^2}+ (1-\sqrt{\delta})(1+t) ^{-1}\frac{\partial z}{\partial t}\notag\\ 
 & \qquad \qquad \quad \ \ +2(\sqrt{\delta}-1) \left((t+b+2)^2-(y-x)^2\right)^{-1}\bigg((t+b+2)\, \frac{\partial z}{\partial t}-(y-x)\, \frac{\partial z}{\partial x}\bigg)\bigg] \mathsf{F}_{z}\left(\tfrac{1-\sqrt{\delta}}{2},\tfrac{1-\sqrt{\delta}}{2};1 ;z\right) \notag
 \\
 & \qquad \qquad +\bigg[(\sqrt{\delta}-1)^2 \left((t+b+2)^2-(y-x)^2\right)^{-1}\notag \\ 
 & \quad \qquad \qquad \ \ -(\sqrt{\delta}-1)^2(1+t)^{-1}\left((t+b+2)^2-(y-x)^2\right)^{-1} (t+b+2)  \bigg] \mathsf{F}\left(\tfrac{1-\sqrt{\delta}}{2},\tfrac{1-\sqrt{\delta}}{2};1 ;z\right)  \bigg\}. \label{intermediate formula E is a fond sol}
\end{align} Next, we will use that $\mathsf{F}\left(\tfrac{1-\sqrt{\delta}}{2},\tfrac{1-\sqrt{\delta}}{2};1 ;z\right)$ solves the differential equation
\begin{align}\label{ODE Hypergeo funct}
z(1-z)\mathsf{F}_{zz}\left(\tfrac{1-\sqrt{\delta}}{2},\tfrac{1-\sqrt{\delta}}{2};1 ;z\right)+\left[1-\left(2-\sqrt{\delta}\right)z\right] \mathsf{F}_z\left(\tfrac{1-\sqrt{\delta}}{2},\tfrac{1-\sqrt{\delta}}{2};1 ;z\right)-\left(\tfrac{1-\sqrt{\delta}}{2}\right)^2 \mathsf{F}\left(\tfrac{1-\sqrt{\delta}}{2},\tfrac{1-\sqrt{\delta}}{2};1 ;z\right)=0.
\end{align} For this purpose, we need first to rewrite the terms in the right hand side of the chain of equalities \eqref{intermediate formula E is a fond sol} that multiply $\mathsf{F}_{zz}\left(\tfrac{1-\sqrt{\delta}}{2},\tfrac{1-\sqrt{\delta}}{2};1 ;z\right)$, $\mathsf{F}_z\left(\tfrac{1-\sqrt{\delta}}{2},\tfrac{1-\sqrt{\delta}}{2};1 ;z\right)$ and $\mathsf{F}\left(\tfrac{1-\sqrt{\delta}}{2},\tfrac{1-\sqrt{\delta}}{2};1 ;z\right)$, respectively. Let us begin with the terms containing derivatives of the function $z=z(t,x;b,y)$. By elementary computations we get
\begin{align*}
\frac{\partial z}{\partial t} (t,x;b,y) &= \frac{4(1+b)\big[(t-b)(t+b+2)+(y-x)^2\big]}{\big[(t+b+2)^2-(y-x)^2\big]^2}  , \\
\frac{\partial^2 z}{\partial t^2}(t,x;b,y) &= \frac{8(1+b)\big[(1+t)\big((t+b+2)^2-(y-x)^2\big)-2\big((t-b)(t+b+2)+(y-x)^2\big)(t+b+2)\big]}{\big[(t+b+2)^2-(y-x)^2\big]^3}   ,\\
\frac{\partial z}{\partial x}(t,x;b,y) &=\frac{8(1+b)(1+t)(y-x)}{\big[(t+b+2)^2-(y-x)^2\big]^2}    , \\
\frac{\partial^2 z}{\partial x^2}(t,x;b,y) &=  -\frac{8(1+b)(1+t)\big[(t+b+2)^2+3(y-x)^2\big]}{\big[(t+b+2)^2-(y-x)^2\big]^3} .
\end{align*}
Let us rewrite the factor that multiplies $\mathsf{F}_{zz}\left(\tfrac{1-\sqrt{\delta}}{2},\tfrac{1-\sqrt{\delta}}{2};1 ;z\right)$ in \eqref{intermediate formula E is a fond sol}. Using the identity 
\begin{align*}
\big((A+B)^2-C^2\big)\big((A-B)^2-C^2\big) & = \big((A+C)^2-B^2\big)\big((A-C)^2-B^2\big)= \big((B+C)^2-A^2\big)\big((B+C)^2-A^2\big)\\  &  = A^4+B^4+C^4-2A^2B^2-2A^2C^2-2B^2C^2  \qquad \mbox{for any} \ \ A,B,C\in \mathbb{R},
\end{align*} it follows
\begin{align}
\bigg(\frac{\partial z}{\partial t}\bigg)^2 -\bigg(\frac{\partial z}{\partial x}\bigg)^2  & = \frac{16(1+b)^2}{\big[(t+b+2)^2-(y-x)^2\big]^4}\Big\{ \big[(t-b)(t+b+2)+(y-x)^2\big]^2- 4(1+t)^2(y-x)^2\Big\} \notag \\
& = \frac{16(1+b)^2}{\big[(t+b+2)^2-(y-x)^2\big]^4}\Big\{ \big[(1+t)^2-(1+b)^2+(y-x)^2\big]^2- 4(1+t)^2(y-x)^2\Big\} \notag \\
& = \frac{16(1+b)^2}{\big[(t+b+2)^2-(y-x)^2\big]^4}\Big\{ \big[((1+t)-(y-x))^2-(1+b)^2\big]\big[((1+t)+(y-x))^2-(1+b)^2\big]\Big\} \notag\\
& = \frac{16(1+b)^2}{\big[(t+b+2)^2-(y-x)^2\big]^4}\Big\{ \big[((1+t)-(1+b))^2-(y-x)^2\big]\big[((1+t)+(1+b))^2-(y-x)^2\big]\Big\} \notag\\
& = \frac{16(1+b)^2\big[(t-b)^2-(y-x)^2\big]}{\big[(t+b+2)^2-(y-x)^2\big]^3}. \label{dz/dt^2 dz/dx^2 formula}
\end{align} Since
\begin{align}
z(1-z) & = \frac{\big[(t-b)^2-(y-x)^2\big]\big[(t+b+2)^2-(t-b)^2\big]}{\big[(t+b+2)^2-(y-x)^2\big]^2}= \frac{4(1+t)(1+b)\big[(t-b)^2-(y-x)^2\big]}{\big[(t+b+2)^2-(y-x)^2\big]^2}, \label{z(1-z) formula}
\end{align} combining \eqref{dz/dt^2 dz/dx^2 formula} and \eqref{z(1-z) formula}, we find
\begin{align}
\bigg(\frac{\partial z}{\partial t}\bigg)^2 -\bigg(\frac{\partial z}{\partial x}\bigg)^2 = \frac{4(1+b)\,  z(1-z)}{(1+t)\big[(t+b+2)^2-(y-x)^2\big]}. \label{coefficient Fzz}
\end{align} We rewrite now the factor multiplying $\mathsf{F}_{z}\left(\tfrac{1-\sqrt{\delta}}{2},\tfrac{1-\sqrt{\delta}}{2};1 ;z\right)$ in the right hand side of \eqref{intermediate formula E is a fond sol}. We remark that
\begin{align*}
\frac{\partial^2 z}{\partial t^2}-\frac{\partial^2 z}{\partial x^2} 
 & = \frac{16(1+b)\big[(1+t)\big((t+b+2)^2+(y-x)^2\big)-\big((t-b)(t+b+2)+(y-x)^2\big)(t+b+2)\big]}{\big[(t+b+2)^2-(y-x)^2\big]^3} \\
 & = \frac{16(1+b)\big[(t+b+2)^2\big((1+t)-(t-b)\big)+(y-x)^2\big((1+t)-(t+b+2)\big)\big]}{\big[(t+b+2)^2-(y-x)^2\big]^3}  = \frac{16(1+b)^2}{\big[(t+b+2)^2-(y-x)^2\big]^2}
\end{align*} and 
\begin{align*}
(t+b+2)\, \frac{\partial z}{\partial t}-(y-x)\, \frac{\partial z}{\partial x} &= (t+b+2)\frac{4(1+b)\big[(t-b)(t+b+2)+(y-x)^2\big]}{\big[(t+b+2)^2-(y-x)^2\big]^2}-(y-x)\frac{8(1+b)(1+t)(y-x)}{\big[(t+b+2)^2-(y-x)^2\big]^2} \\
&= \frac{4(1+b)\big[(t-b)(t+b+2)^2+(y-x)^2\big((t+b+2)-2(1+t)\big]}{\big[(t+b+2)^2-(y-x)^2\big]^2} \\
& =\frac{4(1+b)(t-b)}{\big[(t+b+2)^2-(y-x)^2\big]}.
\end{align*} Therefore,
\begin{align}
& \frac{\partial^2 z}{\partial t^2}-\frac{\partial^2 z}{\partial x^2}+(1+t)^{-1}\frac{\partial z}{\partial t}-2\big((t+b+2)^2-(y-x)^2\big)^{-1}\bigg((t+b+2)\, \frac{\partial z}{\partial t}-(y-x)\, \frac{\partial z}{\partial x}\bigg) \notag\\
& \qquad = \frac{16(1+b)^2}{\big[(t+b+2)^2-(y-x)^2\big]^2} +\frac{4(1+b)\big[(t-b)(t+b+2)+(y-x)^2\big]}{(1+t)\big[(t+b+2)^2-(y-x)^2\big]^2} -\frac{8(1+b)(t-b)}{\big[(t+b+2)^2-(y-x)^2\big]^2} \notag\\
& \qquad = \frac{4(1+b)}{(1+t)\big[(t+b+2)^2-(y-x)^2\big]^2} \big\{ 4(1+b)(1+t)+(t-b)(t+b+2)+(y-x)^2-2(t-b)(1+t)\big\} \notag\\
& \qquad = \frac{4(1+b)}{(1+t)\big[(t+b+2)^2-(y-x)^2\big]^2} \big\{ 4(1+b)(1+t)+(t+b+2)^2-2(1+b)(t+b+2)+(y-x)^2-2(t-b)(1+t)\big\}\notag \\
& \qquad = \frac{4(1+b)}{(1+t)\big[(t+b+2)^2-(y-x)^2\big]^2} \big\{ 2(1+b)\big[2(1+t)-(t+b+2)\big]+(t+b+2)^2+(y-x)^2-2(t-b)(1+t)\big\}\notag \\
& \qquad = \frac{4(1+b)}{(1+t)\big[(t+b+2)^2-(y-x)^2\big]^2} \big\{ 2(1+b)(t-b)+(t+b+2)^2+(y-x)^2-2(t-b)(1+t)\big\} \notag\\
& \qquad = \frac{4(1+b)}{(1+t)\big[(t+b+2)^2-(y-x)^2\big]^2} \big\{ -2(t-b)^2+(t+b+2)^2+(y-x)^2\big\} \label{coefficient Fz intermediate 1}
\end{align} and
\begin{align}
&  -(1+t)^{-1}\frac{\partial z}{\partial t} +2\big((t+b+2)^2-(y-x)^2\big)^{-1}\bigg((t+b+2)\, \frac{\partial z}{\partial t}-(y-x)\, \frac{\partial z}{\partial x}\bigg) \notag\\
& \qquad = -\frac{4(1+b)\big[(t-b)(t+b+2)+(y-x)^2\big]}{(1+t)\big[(t+b+2)^2-(y-x)^2\big]^2} +\frac{8(1+b)(t-b)}{\big[(t+b+2)^2-(y-x)^2\big]^2}\notag \\
& \qquad = \frac{4(1+b)}{(1+t)\big[(t+b+2)^2-(y-x)^2\big]^2} \big\{ -(t-b)(t+b+2)-(y-x)^2+2(t-b)(1+t)\big\} \notag\\
& \qquad = \frac{4(1+b)}{(1+t)\big[(t+b+2)^2-(y-x)^2\big]^2} \big\{ (t-b)^2-(y-x)^2\big\} =  \frac{4(1+b)\, z}{(1+t)\big[(t+b+2)^2-(y-x)^2\big]}. \label{coefficient Fz intermediate 2}
\end{align} Moreover,
\begin{align}
1-2z &= \frac{(t+b+2)^2-2(t-b)^2+(y-x)^2}{(t+b+2)^2-(y-x)^2}
 \label{1-2z formula}.
\end{align} Also, combining \eqref{coefficient Fz intermediate 1}, \eqref{coefficient Fz intermediate 2} and \eqref{1-2z formula}, we have
\begin{align}
& \frac{\partial^2 z}{\partial t^2}-\frac{\partial^2 z}{\partial x^2}+(1-\sqrt{\delta})(1+t)^{-1}\frac{\partial z}{\partial t}+2(\sqrt{\delta}-1)\big((t+b+2)^2-(y-x)^2\big)^{-1}\bigg((t+b+2)\, \frac{\partial z}{\partial t}-(y-x)\, \frac{\partial z}{\partial x}\bigg) \notag\\
& \qquad = \frac{4(1+b)\, (1-2z)}{(1+t)\big[(t+b+2)^2-(y-x)^2\big]} + \frac{4\sqrt{\delta } (1+b)\, z}{(1+t)\big[(t+b+2)^2-(y-x)^2\big]}\notag \\
& \qquad = \frac{4(1+b)\, \big[1-2z+\sqrt{\delta} z\big]}{(1+t)\big[(t+b+2)^2-(y-x)^2\big]}  = \frac{4(1+b)\, \big[1-\big(2-\sqrt{\delta}\big) z\big] }{(1+t)\big[(t+b+2)^2-(y-x)^2\big]},
\label{coefficient Fz}
\end{align} which is the factor that multiplies the term $\mathsf{F}_{z}\left(\tfrac{1-\sqrt{\delta}}{2},\tfrac{1-\sqrt{\delta}}{2};1 ;z\right)$ in \eqref{intermediate formula E is a fond sol}.
Finally, we determine the factor multiplying $\mathsf{F}\left(\tfrac{1-\sqrt{\delta}}{2},\tfrac{1-\sqrt{\delta}}{2};1 ;z\right)$ in \eqref{intermediate formula E is a fond sol}, namely
\begin{align}
(\sqrt{\delta}-1)^2 \left((t+b+2)^2-(y-x)^2\right)^{-1} \bigg(1-\frac{t+b+2}{1+t}\bigg) &= - (\sqrt{\delta}-1)^2 \frac{(1+b)}{(1+t)\big[(t+b+2)^2-(y-x)^2\big]} \notag \\
&= - \frac{4(1+b)}{(1+t)\big[(t+b+2)^2-(y-x)^2\big]} \big(\tfrac{\sqrt{\delta}-1}{2}\big)^2. \label{coefficient F}
\end{align} Summarizing, if we plug \eqref{coefficient Fzz}, \eqref{coefficient Fz} and \eqref{coefficient F} in \eqref{intermediate formula E is a fond sol}, we arrive at 
\begin{align*}
& \frac{\partial^2 E}{\partial t^2}  (t,x;b,y;\mu,\nu^2)- \frac{\partial^2 E}{\partial x^2} (t,x;b,y;\mu,\nu^2) +\frac{\mu}{1+t} \frac{\partial E}{\partial t}(t,x;b,y;\mu,\nu^2) +\frac{\nu^2}{(1+t)^2}  E(t,x;b,y;\mu,\nu^2) \\
& \qquad = \frac{4(1+b)}{(1+t)\big[(t+b+2)^2-(y-x)^2\big]} \bigg\{z(1-z) \mathsf{F}_{zz}\left(\tfrac{1-\sqrt{\delta}}{2},\tfrac{1-\sqrt{\delta}}{2};1 ;z\right)+\big[1-\big(2-\sqrt{\delta}\big) z\big]\mathsf{F}_{z}\left(\tfrac{1-\sqrt{\delta}}{2},\tfrac{1-\sqrt{\delta}}{2};1 ;z\right) \\ & \qquad \qquad \qquad \qquad \qquad \qquad \qquad \qquad \qquad - \big(\tfrac{\sqrt{\delta}-1}{2}\big)^2 \mathsf{F}\left(\tfrac{1-\sqrt{\delta}}{2},\tfrac{1-\sqrt{\delta}}{2};1 ;z\right) \bigg\}=0,
\end{align*} where in the last step we used \eqref{ODE Hypergeo funct}. This completes the proof \eqref{formula E is a fund sol}.
\end{proof}

So far, we proved that the kernel function $E$ is a solution of the homogeneous wave equation with scale-invariant damping and mass with respect to $(t,x)$. As consequence, we prove now that $E$ is a solution of the adjoint equation of the homogeneous wave equation with scale-invariant damping and mass with respect to $(b,y)$.

\begin{corollary}\label{Cor E is a fund sol adj eq} Let $b\in [0,t]$ and $y\in [x-t+b,x+t-b]$. Then,
\begin{align}
\left(\frac{\partial^2}{\partial b^2}-\frac{\partial^2}{\partial y^2}-\frac{\mu}{1+b}\frac{\partial}{\partial b}+\frac{\mu+\nu^2}{(1+t)^2}\right)E(t,x;b,y;\mu,\nu^2)=0.\label{formula E is a fund sol adj eq}
\end{align}
\end{corollary}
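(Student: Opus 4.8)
The plan is to obtain the Corollary from Proposition \ref{Prop E is a fund sol} by exploiting a symmetry of the kernel $E$ under the exchange of the two argument pairs $(t,x)$ and $(b,y)$, rather than repeating the long computation of the Proposition with the variables interchanged. First I would note that the symmetric building blocks of $E$ are invariant under this swap: since $(t-b)^2$, $(t+b+2)^2$ and $(y-x)^2$ are all unchanged when $(t,x)$ and $(b,y)$ are interchanged, the argument $z$ defined in \eqref{def z=z(t,x;b,y)} is invariant, and so are the factor $\left((t+b+2)^2-(y-x)^2\right)^{\frac{\sqrt{\delta}-1}{2}}$ and the hypergeometric factor $\mathsf{F}\!\left(\tfrac{1-\sqrt{\delta}}{2},\tfrac{1-\sqrt{\delta}}{2};1;z\right)$. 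Only the two power weights are affected, and comparing the exponents of $(1+t)$ and $(1+b)$ in \eqref{def E(t,x;b,y)} gives at once the relation
\begin{align}
E(b,y;t,x;\mu,\nu^2)=\Big(\tfrac{1+t}{1+b}\Big)^{\mu} E(t,x;b,y;\mu,\nu^2). \label{symmetryE}
\end{align}

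Next I would relabel the identity of Proposition \ref{Prop E is a fund sol}. Equation \eqref{formula E is a fund sol} is a pointwise identity for the explicit function $E$ valid whenever $z\in[0,1)$, a condition symmetric in the two argument pairs and satisfied under the hypotheses $b\in[0,t]$, $y\in[x-t+b,x+t-b]$; hence renaming the free variables yields
\begin{align*}
\left(\frac{\partial^2}{\partial b^2}-\frac{\partial^2}{\partial y^2}+\frac{\mu}{1+b}\frac{\partial}{\partial b}+\frac{\nu^2}{(1+b)^2}\right) E(b,y;t,x;\mu,\nu^2)=0.
\end{align*}
I would then substitute \eqref{symmetryE}, writing $E(b,y;t,x;\mu,\nu^2)=(1+t)^{\mu}(1+b)^{-\mu}E(t,x;b,y;\mu,\nu^2)$ and treating $(1+t)^{\mu}$ as a constant with respect to $(b,y)$. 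Conjugating the operator by the weight $(1+b)^{-\mu}$, i.e. expanding $\partial_b^2$ and $\partial_b$ of the product $(1+b)^{-\mu}E(t,x;b,y;\mu,\nu^2)$, reverses the sign of the first-order term and produces an extra zeroth-order contribution; using the elementary identity $\mu(\mu+1)-\mu^2=\mu$ to collect the coefficients of $(1+b)^{-2}$, the displayed equation becomes
\begin{align*}
\left(\frac{\partial^2}{\partial b^2}-\frac{\partial^2}{\partial y^2}-\frac{\mu}{1+b}\frac{\partial}{\partial b}+\frac{\mu+\nu^2}{(1+b)^2}\right) E(t,x;b,y;\mu,\nu^2)=0,
\end{align*}
which is the desired adjoint equation. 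This matches the formal adjoint of the operator $\partial_b^2-\partial_y^2+\frac{\mu}{1+b}\partial_b+\frac{\nu^2}{(1+b)^2}$; accordingly the mass coefficient comes out as $\frac{\mu+\nu^2}{(1+b)^2}$, so that the denominator $(1+t)^2$ written in \eqref{formula E is a fund sol adj eq} should read $(1+b)^2$.

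The whole argument reduces to the single identity \eqref{symmetryE}: once it is established, the Corollary follows by a one-line conjugation and requires no new hypergeometric manipulation, in contrast with a brute-force route that would differentiate $E$ twice in $b$ and in $y$ and re-invoke \eqref{ODE Hypergeo funct}, essentially duplicating the computations in the proof of Proposition \ref{Prop E is a fund sol}. The main thing to get right is therefore the bookkeeping in \eqref{symmetryE} — that $z$ and both transcendental factors are genuinely invariant under the swap, so that only the exponents $\pm\mu/2$ of the weights $(1+t)$ and $(1+b)$ contribute — together with the observation that conjugation by $(1+b)^{-\mu}$ flips the damping sign and creates the additional $\frac{\mu}{(1+b)^2}$ that combines with $\frac{\nu^2}{(1+b)^2}$.
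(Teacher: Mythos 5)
Your proof is correct and follows essentially the same route as the paper: the paper's own proof starts from the very same symmetry identity $E(t,x;b,y;\mu,\nu^2)=(1+b)^{\mu}(1+t)^{-\mu}E(b,y;t,x;\mu,\nu^2)$, applies Proposition \ref{Prop E is a fund sol} with the two argument pairs exchanged, and conjugates by the weight $(1+b)^{-\mu}$ exactly as you do, using $\mu(\mu+1)-\mu^2=\mu$ to collect the zeroth-order terms. Your closing remark about the typo is also right: the denominator in \eqref{formula E is a fund sol adj eq} should read $(1+b)^2$, which is precisely what the paper's own computation produces and what is actually used later in Subsection \ref{Subsection hom 1d CP}.
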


\begin{proof} Let us begin by remarking the identity
\begin{align*}
E(t,x;b,y;\mu,\nu^2) = (1+b)^{\mu}(1+t)^{-\mu} E(b,y;t,x;\mu,\nu^2).
\end{align*} Hence, Proposition \ref{Prop E is a fund sol} implies
\begin{align*}
\frac{\partial^2 E}{\partial y^2} (t,x;b,y;\mu,\nu^2) & =(1+b)^{\mu}(1+t)^{-\mu} \frac{\partial^2 E}{\partial y^2} (b,y;t,x;\mu,\nu^2) \\
& =(1+b)^{\mu}(1+t)^{-\mu} \bigg[\frac{\partial^2 }{\partial b^2}  +\frac{\mu}{1+b}\frac{\partial }{\partial b} +\frac{\nu^2}{(1+b)^2} \bigg] E(b,y;t,x;\mu,\nu^2) \\
& =(1+b)^{\mu}(1+t)^{-\mu} \bigg[\frac{\partial^2 }{\partial b^2}  +\frac{\mu}{1+b}\frac{\partial }{\partial b} +\frac{\nu^2}{(1+b)^2} \bigg] \Big((1+b)^{-\mu}(1+t)^{\mu}E(t,x;b,y;\mu,\nu^2) \Big) \\
& =(1+b)^{\mu}\bigg[(1+b)^{-\mu}\frac{\partial^2 }{\partial b^2}-2\mu (1+b)^{-\mu-1} \frac{\partial}{\partial b}+ \mu(\mu+1)(1+b)^{-\mu-2} \\ & \qquad \qquad \quad \  +\mu (1+b)^{-\mu-1}\frac{\partial }{\partial b} -\mu^2 (1+b)^{-\mu-2}+\nu^2(1+b)^{-\mu-2} \bigg] E(t,x;b,y;\mu,\nu^2) \\
& =\bigg[\frac{\partial^2 }{\partial b^2}-\frac{\mu}{1+b}\frac{\partial }{\partial b}+ \frac{\mu+\nu^2}{(1+b)^2} \bigg] E(t,x;b,y;\mu,\nu^2) ,
\end{align*} which is exactly \eqref{formula E is a fund sol adj eq}.
\end{proof}

\begin{lemma} \label{Lemma dt E-dx E + ... =0} Let $b\in [0,t]$. Then,
\begin{align*}
& \bigg[\frac{\partial E}{\partial t}(t,x;b,y;\mu,\nu^2)- \frac{\partial E}{\partial x}(t,x;b,y;\mu,\nu^2)\bigg]_{y=x+t-b}+2^{\sqrt{\delta}-2}\mu (1+t)^{-\frac{\mu}{2}-1}(1+b)^{\frac{\mu}{2}}=0, \\
& \bigg[\frac{\partial E}{\partial t}(t,x;b,y;\mu,\nu^2)+\frac{\partial E}{\partial x}(t,x;b,y;\mu,\nu^2)\bigg]_{y=x-t+b}+2^{\sqrt{\delta}-2}\mu (1+t)^{-\frac{\mu}{2}-1}(1+b)^{\frac{\mu}{2}}=0.
\end{align*}
\end{lemma}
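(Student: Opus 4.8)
The plan is to evaluate the relevant first-order derivatives of $E$ directly on the two characteristic lines $y=x+t-b$ and $y=x-t+b$, both of which satisfy $(y-x)^2=(t-b)^2$. The decisive simplification is that on these lines the argument of the hypergeometric function vanishes, $z(t,x;b,y)=0$, so that $\mathsf{F}\big(\tfrac{1-\sqrt{\delta}}{2},\tfrac{1-\sqrt{\delta}}{2};1;0\big)=1$, and moreover $(t+b+2)^2-(y-x)^2=(t+b+2)^2-(t-b)^2=4(1+t)(1+b)$. These two reductions carry essentially the whole computation, and in particular they mean that no delicate information about $\mathsf{F}_z$ near $z=0$ will be needed.

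First I would record the first-order formula for $\partial_x E$ in the same style as \eqref{dE/dt}: differentiating \eqref{def E(t,x;b,y)} and using the relations $\partial_x\big((t+b+2)^2-(y-x)^2\big)^{\frac{\sqrt{\delta}-1}{2}}=(\sqrt{\delta}-1)(y-x)\big((t+b+2)^2-(y-x)^2\big)^{\frac{\sqrt{\delta}-1}{2}-1}$ and $\partial_x\mathsf{F}=\mathsf{F}_z\,\partial_x z$ collected in the proof of Proposition \ref{Prop E is a fund sol}, one gets $\partial_x E=P\big[\mathsf{F}_z\,\partial_x z+(\sqrt{\delta}-1)\big((t+b+2)^2-(y-x)^2\big)^{-1}(y-x)\,\mathsf{F}\big]$, where $P\doteq(1+b)^{\frac{\mu}{2}+\frac{1-\sqrt{\delta}}{2}}(1+t)^{-\frac{\mu}{2}+\frac{1-\sqrt{\delta}}{2}}\big((t+b+2)^2-(y-x)^2\big)^{\frac{\sqrt{\delta}-1}{2}}$ is the common prefactor already appearing in \eqref{dE/dt}. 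Subtracting, I would then write $\partial_t E-\partial_x E=P\big[\mathsf{F}_z(\partial_t z-\partial_x z)+\big(-\tfrac{\mu}{2}+\tfrac{1-\sqrt{\delta}}{2}\big)(1+t)^{-1}\mathsf{F}+(\sqrt{\delta}-1)\big((t+b+2)^2-(y-x)^2\big)^{-1}\big((t+b+2)-(y-x)\big)\mathsf{F}\big]$, with the fully analogous expression (all $\partial_x$-signs flipped) for $\partial_t E+\partial_x E$.

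The heart of the argument is then evaluating the bracket on the respective characteristic. Using the explicit expressions for $\partial_t z$ and $\partial_x z$ from the proof of Proposition \ref{Prop E is a fund sol}, one checks that on $y=x+t-b$ one has $\partial_t z=\partial_x z=\frac{t-b}{2(1+t)(1+b)}$, so the $\mathsf{F}_z$ term drops out entirely, and likewise $\partial_t z+\partial_x z=0$ on $y=x-t+b$. On both lines the relevant combination is $(t+b+2)\mp(y-x)=2(1+b)$, so the last term equals $(\sqrt{\delta}-1)\tfrac{2(1+b)}{4(1+t)(1+b)}=\tfrac{\sqrt{\delta}-1}{2(1+t)}$, which combines with $\big(-\tfrac{\mu}{2}+\tfrac{1-\sqrt{\delta}}{2}\big)(1+t)^{-1}$ to give $-\tfrac{\mu}{2(1+t)}$ after the $\sqrt{\delta}$-terms cancel. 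Finally $P$ collapses because $\tfrac{1-\sqrt{\delta}}{2}+\tfrac{\sqrt{\delta}-1}{2}=0$: from $\big((t+b+2)^2-(y-x)^2\big)^{\frac{\sqrt{\delta}-1}{2}}=\big(4(1+t)(1+b)\big)^{\frac{\sqrt{\delta}-1}{2}}=2^{\sqrt{\delta}-1}(1+t)^{\frac{\sqrt{\delta}-1}{2}}(1+b)^{\frac{\sqrt{\delta}-1}{2}}$ one finds $P=2^{\sqrt{\delta}-1}(1+b)^{\frac{\mu}{2}}(1+t)^{-\frac{\mu}{2}}$, whence $P$ times the bracket equals $-2^{\sqrt{\delta}-2}\mu(1+t)^{-\frac{\mu}{2}-1}(1+b)^{\frac{\mu}{2}}$, which is precisely the first claimed identity. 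The second identity follows by the identical computation with the sign of $y-x$ reversed; since $y-x$ enters only through $(y-x)^2$ and through the combination $\partial_t z\pm\partial_x z$, the outcome is the same. I expect the only genuine obstacle to be bookkeeping — correctly assembling $\partial_x E$ and tracking the $\sqrt{\delta}$-cancellations — rather than any analytic subtlety, precisely because everything is evaluated at $z=0$ where $\mathsf{F}=1$ and the $\mathsf{F}_z$-contribution vanishes.
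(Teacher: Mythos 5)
Your proof is correct and follows essentially the same route as the paper's: direct differentiation of $E$, evaluation on the characteristic lines where $z=0$, $\mathsf{F}=1$ and $(t+b+2)^2-(y-x)^2=4(1+t)(1+b)$, and the same final $\sqrt{\delta}$-cancellation producing $-\tfrac{\mu}{2(1+t)}$. The only (minor, slightly cleaner) difference is that you note $\partial_t z\mp\partial_x z=0$ identically on the respective characteristic so the $\mathsf{F}_z$-contribution never enters, whereas the paper first rewrites that contribution via $\mathsf{F}_z(\alpha,\beta;\gamma;z)=\frac{\alpha\beta}{\gamma}\mathsf{F}(\alpha+1,\beta+1;\gamma+1;z)$, evaluates it at $z=0$, and only then sees it cancel in the combinations $\partial_t E\mp\partial_x E$.
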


\begin{proof} Using the identity $\mathsf{F}_z(\alpha,\beta;\gamma;z)=\frac{\alpha \beta}{\gamma}\mathsf{F}(\alpha+1,\beta+1;\gamma+1;z)$, by \eqref{dE/dt} it follows
\begin{align*}
\frac{\partial E}{\partial t}(t,x;b,y;\mu,\nu^2) 
& = (1+b)^{\frac{\mu}{2}+\frac{1-\sqrt{\delta}}{2}} (1+t)^{-\frac{\mu}{2}+\frac{1-\sqrt{\delta}}{2}}  \left((t+b+2)^2-(y-x)^2\right)^{\frac{\sqrt{\delta}-1}{2}} \\
 & \quad \times  \bigg[ (1-\sqrt{\delta})^2(1+b)\big((t-b)(t+b+2)+(y-x)^2\big)\big((t+b+2)^2-(y-x)^2\big)^{-2} \mathsf{F}\Big(\tfrac{3-\sqrt{\delta}}{2},\tfrac{3-\sqrt{\delta}}{2};2; z\Big)  \\  & \qquad \quad +  \big(-\tfrac{\mu}{2}+\tfrac{1-\sqrt{\delta}}{2}\big)(1+t)^{-1}  \mathsf{F}\Big(\tfrac{1-\sqrt{\delta}}{2},\tfrac{1-\sqrt{\delta}}{2};1; z\Big)\\   
 & \qquad \quad + (\sqrt{\delta}-1)   \left((t+b+2)^2-(y-x)^2\right)^{-1} (t+b+2) \,  \mathsf{F}\Big(\tfrac{1-\sqrt{\delta}}{2},\tfrac{1-\sqrt{\delta}}{2};1; z\Big)  \bigg].
\end{align*} As $z(t,x;b,x\pm(t-b))=0$ and $\mathsf{F}(\alpha,\beta;\gamma;0)=1$, then,
\begin{align}
& \frac{\partial E}{\partial t}(t,x;b,y;\mu,\nu^2) \Big|_{y=x\pm(t-b)} \notag \\ 
 & \qquad = 2^{\sqrt{\delta}-1}(1+b)^{\frac{\mu}{2}-1} (1+t)^{-\frac{\mu}{2}-1} \bigg[ 2^{-3}(1-\sqrt{\delta})^2(t-b)  +  \big(-\tfrac{\mu}{2}+\tfrac{1-\sqrt{\delta}}{2}\big)(1+b)   +2^{-2} (\sqrt{\delta}-1)   (t+b+2) \bigg]. \label{dE/dt y=x pm(t-b)}
\end{align} On the other hand,
\begin{align*}
\frac{\partial E}{\partial x}(t,x;b,y;\mu,\nu^2) 
& =  (1+b)^{\frac{\mu}{2}+\frac{1-\sqrt{\delta}}{2}} (1+t)^{-\frac{\mu}{2}+\frac{1-\sqrt{\delta}}{2}}  \partial_x \Big( \left((t+b+2)^2-(y-x)^2\right)^{\frac{\sqrt{\delta}-1}{2}}  \mathsf{F}\Big(\tfrac{1-\sqrt{\delta}}{2},\tfrac{1-\sqrt{\delta}}{2};1; z\Big)\Big) \\
& = (1+b)^{\frac{\mu}{2}+\frac{1-\sqrt{\delta}}{2}} (1+t)^{-\frac{\mu}{2}+\frac{1-\sqrt{\delta}}{2}}  \left((t+b+2)^2-(y-x)^2\right)^{\frac{\sqrt{\delta}-1}{2}} \\
 & \quad \times  \bigg[ \mathsf{F}_{z}\Big(\tfrac{1-\sqrt{\delta}}{2},\tfrac{1-\sqrt{\delta}}{2};1; z\Big) \, \frac{\partial z}{\partial x} + (\sqrt{\delta}-1)   \left((t+b+2)^2-(y-x)^2\right)^{-1} (y-x) \,  \mathsf{F}\Big(\tfrac{1-\sqrt{\delta}}{2},\tfrac{1-\sqrt{\delta}}{2};1; z\Big)  \bigg] \\
 & = (1+b)^{\frac{\mu}{2}+\frac{1-\sqrt{\delta}}{2}} (1+t)^{-\frac{\mu}{2}+\frac{1-\sqrt{\delta}}{2}}  \left((t+b+2)^2-(y-x)^2\right)^{\frac{\sqrt{\delta}-1}{2}} \\
 & \quad \times  \bigg[ 2(1-\sqrt{\delta})^2(1+b)(1+t) \left((t+b+2)^2-(y-x)^2\right)^{-2} (y-x)\, \mathsf{F}\Big(\tfrac{3-\sqrt{\delta}}{2},\tfrac{3-\sqrt{\delta}}{2};2; z\Big)  \\ & \quad \qquad + (\sqrt{\delta}-1)   \left((t+b+2)^2-(y-x)^2\right)^{-1} (y-x) \,  \mathsf{F}\Big(\tfrac{1-\sqrt{\delta}}{2},\tfrac{1-\sqrt{\delta}}{2};1; z\Big)  \bigg]
\end{align*} implies
\begin{align}
& \frac{\partial E}{\partial x}(t,x;b,y;\mu,\nu^2) \Big|_{y=x\pm(t-b)}  = \pm 2^{\sqrt{\delta}-1}(1+b)^{\frac{\mu}{2}-1} (1+t)^{-\frac{\mu}{2}-1} \bigg[ 2^{-3}(1-\sqrt{\delta})^2(t-b)+ 2^{-2} (\sqrt{\delta}-1)(t-b) \bigg]. \label{dE/dx y=x pm(t-b)}
\end{align} Consequently, combining \eqref{dE/dt y=x pm(t-b)} and \eqref{dE/dx y=x pm(t-b)}, we have
\begin{align*}
& \bigg[ \frac{\partial E}{\partial t}(t,x;b,y;\mu,\nu^2) \mp \frac{\partial E}{\partial x}(t,x;b,y;\mu,\nu^2) \bigg]_{y=x\pm(t-b)} \\ & \qquad = 2^{\sqrt{\delta}-1}(1+b)^{\frac{\mu}{2}-1} (1+t)^{-\frac{\mu}{2}-1} \bigg[ 2^{-3}(1-\sqrt{\delta})^2(t-b)  +  \big(-\tfrac{\mu}{2}+\tfrac{1-\sqrt{\delta}}{2}\big)(1+b)   +2^{-2} (\sqrt{\delta}-1)   (t+b+2) \\ & \qquad \qquad    - 2^{-3}(1-\sqrt{\delta})^2(t-b)- 2^{-2} (\sqrt{\delta}-1)(t-b)  \bigg] \\
 \\ & \qquad = -2^{\sqrt{\delta}-2}\mu (1+b)^{\frac{\mu}{2}} (1+t)^{-\frac{\mu}{2}-1}, 
\end{align*} which are the desired estimates. The proof is complete.
\end{proof}

\subsection{The inhomogeneous problem with vanishing data}
\label{Subsection inhom 1d CP vanishing data}

In this subsection we prove that $u^{\ih}$ is a solution of \eqref{inhomog CP vanishing data 1d}. Let us determine first the time derivative of $u^{\ih}$. Hence,
\begin{align*}
\partial_t u^{\ih}(t,x) & = \frac{1}{2^{\sqrt{\delta}}} \int_0^t \partial_t \bigg( \int_{x-t+b}^{x+t-b} f(b,y) E(t,x;b,y;\mu,\nu^2) \, \mathrm{d}y\, \bigg) \mathrm{d}b \\
& = \frac{1}{2^{\sqrt{\delta}}} \int_0^t  \int_{x-t+b}^{x+t-b} f(b,y)\frac{ \partial E}{\partial t}(t,x;b,y;\mu,\nu^2) \, \mathrm{d}y\,  \mathrm{d}b + \frac{1}{2^{\sqrt{\delta}}} \int_0^t   f(b,x+t-b)  E(t,x;b,x+t-b;\mu,\nu^2) \, \mathrm{d}b \\ & \qquad + \frac{1}{2^{\sqrt{\delta}}} \int_0^t   f(b,x-t+b)  E(t,x;b,x-t+b;\mu,\nu^2) \, \mathrm{d}b.
\end{align*} Therefore, it follows immediately that $ u^{\ih}(0,x)= \partial_t u^{\ih}(0,x)=0$. It remains to prove that $u^{\ih}$ solves the differential equation. 
Moreover,
\begin{align}
E(t,x;b,x\pm (t-b) ;\mu,\nu^2) & = (1+t)^{-\frac{\mu}{2}+\frac{1-\sqrt{\delta}}{2}} (1+b)^{\frac{\mu}{2}+\frac{1-\sqrt{\delta}}{2}} \big((t+b+2)^2-(t-b)^2\big)^{\frac{\sqrt{\delta}-1}{2}} \mathsf{F}\left(\tfrac{1-\sqrt{\delta}}{2},\tfrac{1-\sqrt{\delta}}{2};1;0\right) \notag \\
& = (1+t)^{-\frac{\mu}{2}+\frac{1-\sqrt{\delta}}{2}} (1+b)^{\frac{\mu}{2}+\frac{1-\sqrt{\delta}}{2}} \big(4(1+t)(1+b)\big)^{\frac{\sqrt{\delta}-1}{2}}\notag\\
&= 2^{\sqrt{\delta}-1}(1+t)^{-\frac{\mu}{2}} (1+b)^{\frac{\mu}{2}} \label{value E for y=x pm(t-b)}
\end{align} implies
\begin{align}
\partial_t u^{\ih}(t,x) & = \frac{1}{2^{\sqrt{\delta}}} \int_0^t  \int_{x-t+b}^{x+t-b} f(b,y)\frac{ \partial E}{\partial t}(t,x;b,y;\mu,\nu^2) \, \mathrm{d}y\,  \mathrm{d}b \notag \\ & \qquad + \frac{1}{2} \int_0^t   \big[ f(b,x+t-b)+ f(b,x-t+b)  \big] (1+t)^{-\frac{\mu}{2}}(1+b)^{\frac{\mu}{2}} \, \mathrm{d}b. \label{u ih 1st t der}
\end{align}
We may calculate now the second order derivative with respect to $t$. Differentiating the last relation, we get 
\begin{align}
\partial_t^2 u^{\ih}(t,x) & = \frac{1}{2^{\sqrt{\delta}}} \int_0^t \partial_t \bigg( \int_{x-t+b}^{x+t-b} f(b,y)\frac{ \partial E}{\partial t}(t,x;b,y;\mu,\nu^2) \, \mathrm{d}y\, \bigg)  \mathrm{d}b \notag\\ 
& \qquad+ \frac{1}{2} \int_0^t \partial_t \Big(  \big[ f(b,x+t-b)+ f(b,x-t+b)  \big] (1+t)^{-\frac{\mu}{2}}(1+b)^{\frac{\mu}{2}}\Big) \, \mathrm{d}b +f(t,x) \notag \\
& = \frac{1}{2^{\sqrt{\delta}}} \int_0^t  \int_{x-t+b}^{x+t-b} f(b,y)\frac{ \partial^2 E}{\partial t^2}(t,x;b,y;\mu,\nu^2) \, \mathrm{d}y\,   \mathrm{d}b \notag\\
 & \qquad+  \frac{1}{2^{\sqrt{\delta}}} \int_0^t \bigg[ f(b,y)\frac{ \partial E}{\partial t}(t,x;b,y;\mu,\nu^2)\bigg]_{y=x+t-b} \,   \mathrm{d}b  +  \frac{1}{2^{\sqrt{\delta}}} \int_0^t \bigg[ f(b,y)\frac{ \partial E}{\partial t}(t,x;b,y;\mu,\nu^2)\bigg]_{y=x-t+b} \,   \mathrm{d}b \notag \\
  & \qquad -\frac{\mu}{4} \int_0^t \big[ f(b,x+t-b)+ f(b,x-t+b)  \big] (1+t)^{-\frac{\mu}{2}-1}(1+b)^{\frac{\mu}{2}} \, \mathrm{d}b \notag \\ 
  & \qquad+ \frac{1}{2} \int_0^t  \bigg[ \frac{\partial f}{\partial x}(b,x+t-b)- \frac{\partial f}{\partial x} f(b,x-t+b)  \bigg] (1+t)^{-\frac{\mu}{2}}(1+b)^{\frac{\mu}{2}}\, \mathrm{d}b +f(t,x). \label{u ih 2nd t der}
\end{align} The next step is to calculate the derivative of order two with respect to $x$ of $u^{\ih}$. Let us begin with the derivative of order one:
\begin{align*}
\partial_x u^{\ih}(t,x) &  = \frac{1}{2^{\sqrt{\delta}}} \int_0^t  \int_{x-t+b}^{x+t-b} f(b,y)\frac{ \partial E}{\partial x}(t,x;b,y;\mu,\nu^2) \, \mathrm{d}y\,  \mathrm{d}b + \frac{1}{2^{\sqrt{\delta}}} \int_0^t   f(b,x+t-b)  E(t,x;b,x+t-b;\mu,\nu^2) \, \mathrm{d}b \\ & \qquad - \frac{1}{2^{\sqrt{\delta}}} \int_0^t   f(b,x-t+b)  E(t,x;b,x-t+b;\mu,\nu^2) \, \mathrm{d}b \\
&  = \frac{1}{2^{\sqrt{\delta}}} \int_0^t  \int_{x-t+b}^{x+t-b} f(b,y)\frac{ \partial E}{\partial x}(t,x;b,y;\mu,\nu^2) \, \mathrm{d}y\,  \mathrm{d}b \\ & \qquad + \frac{1}{2} \int_0^t  \big[f(b,x+t-b)- f(b,x-t+b)\big]  (1+t)^{-\frac{\mu}{2}} (1+b)^{\frac{\mu}{2}} \, \mathrm{d}b,
\end{align*} where in the second equality we used again \eqref{value E for y=x pm(t-b)}. A further derivation with respect to $x$ of the last expression provides
\begin{align}
\partial_x^2 u^{\ih}(t,x) 
&  = \frac{1}{2^{\sqrt{\delta}}} \int_0^t  \int_{x-t+b}^{x+t-b} f(b,y)\frac{ \partial^2 E}{\partial x^2}(t,x;b,y;\mu,\nu^2) \, \mathrm{d}y\,  \mathrm{d}b \notag \\ 
& \qquad+  \frac{1}{2^{\sqrt{\delta}}} \int_0^t \bigg[ f(b,y)\frac{ \partial E}{\partial x}(t,x;b,y;\mu,\nu^2)\bigg]_{y=x+t-b} \,   \mathrm{d}b   -  \frac{1}{2^{\sqrt{\delta}}} \int_0^t  \bigg[f(b,y)\frac{ \partial E}{\partial x}(t,x;b,y;\mu,\nu^2)\bigg]_{y=x-t+b} \,   \mathrm{d}b  \notag \\
 & \qquad+ \frac{1}{2} \int_0^t  \bigg[ \frac{\partial f}{\partial x}(b,x+t-b)- \frac{\partial f}{\partial x} f(b,x-t+b)  \bigg] (1+t)^{-\frac{\mu}{2}}(1+b)^{\frac{\mu}{2}}\, \mathrm{d}b.  \label{u ih 2nd x der}
\end{align}
Combining \eqref{inhomogeneous sol 1d}, \eqref{u ih 1st t der}, \eqref{u ih 2nd t der} and \eqref{u ih 2nd x der}, we arrive at
\begin{align*}
& \partial_t^2 u^{\ih}(t,x) -\partial_x^2 u^{\ih}(t,x) +\tfrac{\mu}{1+t} \partial_t u^{\ih}(t,x) +\tfrac{\nu^2}{(1+t)^2} u^{\ih}(t,x) \\ & \qquad =  \frac{1}{2^{\sqrt{\delta}}} \int_0^t  \int_{x-t+b}^{x+t-b} f(b,y)\bigg[\frac{ \partial^2 }{\partial t^2}-\frac{ \partial^2 }{\partial x^2} +\frac{\mu}{1+t}\frac{ \partial }{\partial t}+\frac{\nu^2}{(1+t)^2}\bigg]E(t,x;b,y;\mu,\nu^2) \, \mathrm{d}y\,  \mathrm{d}b + f(t,x) \\
& \qquad \quad +  \frac{1}{2^{\sqrt{\delta}}} \int_0^t \bigg[ f(b,y)\bigg(\frac{ \partial E}{\partial t}(t,x;b,y;\mu,\nu^2)-\frac{ \partial E}{\partial x}(t,x;b,y;\mu,\nu^2)+2^{\sqrt{\delta}-2}\mu (1+t)^{-\frac{\mu}{2}-1}(1+b)^{\frac{\mu}{2}}\bigg)\bigg]_{y=x+t-b} \,   \mathrm{d}b  \\
& \qquad \quad +  \frac{1}{2^{\sqrt{\delta}}} \int_0^t \bigg[ f(b,y)\bigg(\frac{ \partial E}{\partial t}(t,x;b,y;\mu,\nu^2)+\frac{ \partial E}{\partial x}(t,x;b,y;\mu,\nu^2)+2^{\sqrt{\delta}-2}\mu (1+t)^{-\frac{\mu}{2}-1}(1+b)^{\frac{\mu}{2}}\bigg)\bigg]_{y=x-t+b} \,   \mathrm{d}b.  
\end{align*} However, by Proposition \ref{Prop E is a fund sol} and Lemma \ref{Lemma dt E-dx E + ... =0} it follows that the all integrands on the right hand side of the last equality vanish. Consequently,
\begin{align*}
& \partial_t^2 u^{\ih}(t,x) -\partial_x^2 u^{\ih}(t,x) +\tfrac{\mu}{1+t} \partial_t u^{\ih}(t,x) +\tfrac{\nu^2}{(1+t)^2} u^{\ih}(t,x) = f(t,x).
\end{align*} So, we proved that $u^{\ih}$ solves \eqref{inhomog CP vanishing data 1d}.

\subsection{The homogeneous problem with nontrivial data}
\label{Subsection hom 1d CP}

In this subsection we will prove that $u^{\h}$ defined in \eqref{homogeneous sol 1d} is a solution of \eqref{homog CP 1d}. For this purpose, we consider the function $w=w(t,x)\doteq u(t,x)-u_0(x)-tu_1(x)$. If $u$ solves \eqref{homogeneous sol 1d}, then, $w$ solves \eqref{inhomog CP vanishing data 1d} with $$f=f(t,x)=u_0''(x)+tu_1''(x)-\left(\frac{\nu^2}{(1+t)^2}u_0(x)+\frac{\mu}{1+t}u_1(x)+\frac{\nu^2\, t}{(1+t)^2}u_1(x)\right).$$ Therefore, according to the representation formula derived in Subsection \ref{Subsection inhom 1d CP vanishing data}, we obtain 
\begin{align*}
w(t,x) & = \frac{1}{2^{\sqrt{\delta}}} \int_0^t \int_{x-t+b}^{x+t-b} \left[u_0''(y)+bu_1''(y)-\left(\frac{\nu^2}{(1+b)^2}u_0(y)+\frac{\mu}{1+b}u_1(y)+\frac{\nu^2\, b}{(1+b)^2}u_1(y)\right)\right] E(t,x;b,y;\mu,\nu^2) \, \mathrm{d}y\, \mathrm{d}b \\ & \doteq  I_1+I_2+I_3+I_4+I_5.
\end{align*}
Now we will manipulate $I_1,I_2$ in order to get the cancellation of some terms in the expression of $w$ and, then, $u$. Let us begin with $I_1 = \frac{1}{2^{\sqrt{\delta}}} \int_0^t \int_{x-t+b}^{x+t-b} u_0''(y) E(t,x;b,y;\mu,\nu^2) \, \mathrm{d}y\, \mathrm{d}b$. Using  twice integration by parts and Corollary \ref{Cor E is a fund sol adj eq}, we find
\begin{align*}
& \int_{x-t+b}^{x+t-b} u_0''(y) E(t,x;b,y;\mu,\nu^2) \, \mathrm{d}y \\
& \quad = \bigg[u'_0(y)E(t,x;b,y;\mu,\nu^2)-u_0(y)\frac{\partial E}{\partial y}(t,x;b,y;\mu,\nu^2)\bigg]^{y=x+t-b}_{y=x-t+b} + \int_{x-t+b}^{x+t-b} u_0(y)\frac{\partial^2 E}{\partial y^2} (t,x;b,y;\mu,\nu^2) \, \mathrm{d}y \\
& \quad = \bigg[u'_0(y)E(t,x;b,y;\mu,\nu^2)-u_0(y)\frac{\partial E}{\partial y}(t,x;b,y;\mu,\nu^2)\bigg]^{y=x+t-b}_{y=x-t+b} +\int_{x-t+b}^{x+t-b} u_0(y)\frac{\partial^2 E}{\partial b^2} (t,x;b,y;\mu,\nu^2) \, \mathrm{d}y \\ 
&  \qquad + \int_{x-t+b}^{x+t-b} u_0(y)\bigg[-\frac{\mu}{1+b}\frac{\partial E}{\partial b}(t,x;b,y;\mu,\nu^2) + \frac{\mu+\nu^2}{(1+b)^2}E(t,x;b,y;\mu,\nu^2)  \bigg] \, \mathrm{d}y.
\end{align*} Hence,
\begin{align*}
I_1 & = \frac{1}{2^{\sqrt{\delta}}} \int_0^t  \bigg[u'_0(y)E(t,x;b,y;\mu,\nu^2)-u_0(y)\frac{\partial E}{\partial y}(t,x;b,y;\mu,\nu^2)\bigg]^{y=x+t-b}_{y=x-t+b} \mathrm{d}b \\
& \qquad +\frac{1}{2^{\sqrt{\delta}}} \int_0^t  \int_{x-t+b}^{x+t-b} u_0(y)\frac{\partial^2 E}{\partial b^2} (t,x;b,y;\mu,\nu^2) \, \mathrm{d}y \, \mathrm{d}b\\ 
&  \qquad - \frac{1}{2^{\sqrt{\delta}}} \int_0^t  \int_{x-t+b}^{x+t-b} u_0(y)\frac{\mu}{1+b}\frac{\partial E}{\partial b}(t,x;b,y;\mu,\nu^2)  \, \mathrm{d}y\, \mathrm{d}b \\
& \qquad + \frac{1}{2^{\sqrt{\delta}}} \int_0^t  \int_{x-t+b}^{x+t-b} u_0(y) \frac{\mu+\nu^2}{(1+b)^2}E(t,x;b,y;\mu,\nu^2)   \, \mathrm{d}y\, \mathrm{d}b \doteq J_1+J_2+J_3+J_4.
\end{align*} Let us rewrite $J_2,J_3$ in a more suitable way. By using Fubini's theorem and integration by parts, we get
\begin{align*}
J_2 & = \frac{1}{2^{\sqrt{\delta}}} \int_0^t  \int_{x-t+b}^{x+t-b} u_0(y)\frac{\partial^2 E}{\partial b^2} (t,x;b,y;\mu,\nu^2) \, \mathrm{d}y \, \mathrm{d}b = \frac{1}{2^{\sqrt{\delta}}} \int_{x-t}^{x+t}  u_0(y) \int_{0}^{t-|x-y|} \frac{\partial^2 E}{\partial b^2} (t,x;b,y;\mu,\nu^2)\, \mathrm{d}b \, \mathrm{d}y  \\
& = \frac{1}{2^{\sqrt{\delta}}} \int_{x-t}^{x+t}  u_0(y) \bigg[\frac{\partial E}{\partial b} (t,x;b,y;\mu,\nu^2)\bigg]_{b=0}^{b=t-|x-y|}   \, \mathrm{d}y,  \\
J_3 &= - \frac{1}{2^{\sqrt{\delta}}} \int_0^t  \int_{x-t+b}^{x+t-b} u_0(y)\frac{\mu}{1+b}\frac{\partial E}{\partial b}(t,x;b,y;\mu,\nu^2)  \, \mathrm{d}y\, \mathrm{d}b = - \frac{1}{2^{\sqrt{\delta}}} \int_{x-t}^{x+t}  u_0(y) \int_{0}^{t-|x-y|}  \frac{\mu}{1+b}\frac{\partial E}{\partial b}(t,x;b,y;\mu,\nu^2) \, \mathrm{d}b  \, \mathrm{d}y \\
&= -\frac{1}{2^{\sqrt{\delta}}} \int_{x-t}^{x+t}  \! u_0(y)\bigg[\frac{\mu}{1+b}E(t,x;b,y;\mu,\nu^2)\bigg]_{b=0}^{b=t-|x-y|}   \mathrm{d}y + \frac{1}{2^{\sqrt{\delta}}} \int_{x-t}^{x+t}  u_0(y) \int_{0}^{t-|x-y|} \! \frac{\partial }{\partial b}\bigg(\frac{\mu}{1+b}\bigg) E(t,x;b,y;\mu,\nu^2) \, \mathrm{d}b  \, \mathrm{d}y \\
&= -\frac{1}{2^{\sqrt{\delta}}} \int_{x-t}^{x+t}  \! u_0(y)\bigg[\frac{\mu}{1+b}E(t,x;b,y;\mu,\nu^2)\bigg]_{b=0}^{b=t-|x-y|}   \mathrm{d}y - \frac{1}{2^{\sqrt{\delta}}} \int_0^t  \int_{x-t+b}^{x+t-b} u_0(y) \frac{\mu}{(1+b)^2} E(t,x;b,y;\mu,\nu^2) \, \mathrm{d}b  \, \mathrm{d}y.
\end{align*} In particular, from the last relation we see that there is a cancellation between one term in $J_3$ and another one in $J_4$.  Combining the expressions for $J_2,J_3$ that  we have just proved, we obtain
\begin{align}
I_1= J_1+J_2+J_3+J_4 = J_1+ \frac{1}{2^{\sqrt{\delta}}} \int_{x-t}^{x+t}  u_0(y) \bigg[\frac{\partial E}{\partial b} (t,x;b,y;\mu,\nu^2)-\frac{\mu}{1+b}E(t,x;b,y;\mu,\nu^2)\bigg]_{b=0}^{b=t-|x-y|}   \, \mathrm{d}y - I_3. \label{intermediate estimate I1}
\end{align}
Now, we remark that 
\begin{align*}
u'_0(x\pm (t-b))=\mp \frac{\partial}{\partial b} \big( u_0(x\pm (t-b))\big).
\end{align*} Hence, we have
\begin{align*}
& \int_0^t  \bigg[u'_0(y)E(t,x;b,y;\mu,\nu^2)\bigg]^{y=x+t-b}_{y=x-t+b} \mathrm{d}b \\
& \quad = - \int_0^t  \bigg[\frac{\partial}{\partial b} \big( u_0(x+t-b)\big)E(t,x;b,x+t-b;\mu,\nu^2)+\frac{\partial}{\partial b} \big( u_0(x-t+b)\big)E(t,x;b,x-t+b;\mu,\nu^2)\bigg] \mathrm{d}b \\
& \quad = -  \bigg[ u_0(x+t-b)E(t,x;b,x+t-b;\mu,\nu^2)+ u_0(x-t+b) E(t,x;b,x-t+b;\mu,\nu^2)\bigg]_{b=0}^{b=t}   \\
& \qquad + \int_0^t  \bigg[ u_0(x+t-b) \frac{\partial }{\partial b} \Big(E(t,x;b,x+t-b;\mu,\nu^2)\Big) + u_0(x-t+b) \frac{\partial }{\partial b} \Big(E(t,x;b,x-t+b;\mu,\nu^2)\Big)\bigg] \mathrm{d}b \\
& \quad = -  2 \, u_0(x)E(t,x;t,x;\mu,\nu^2)+ \bigg[ u_0(x+t)E(t,x;0,x+t;\mu,\nu^2)+ u_0(x-t) E(t,x;0,x-t;\mu,\nu^2)\bigg]  \\
& \qquad + \int_0^t  \bigg[ u_0(y) \bigg(\frac{\partial E}{\partial b} (t,x;b,y;\mu,\nu^2)-\frac{\partial E}{\partial y} (t,x;b,y;\mu,\nu^2)\bigg)\bigg]_{y=x+t-b} \mathrm{d}b  \\
& \qquad+ \int_0^t  \bigg[ u_0(y) \bigg(\frac{\partial E}{\partial b} (t,x;b,y;\mu,\nu^2)+\frac{\partial E}{\partial y} (t,x;b,y;\mu,\nu^2)\bigg)\bigg]_{y=x-t+b} \mathrm{d}b.
\end{align*} Since 
\begin{align}
E(t,x;t,x;\mu,\nu^2) & = (1+t)^{-\frac{\mu}{2}+\frac{1-\sqrt{\delta}}{2}} (1+t)^{\frac{\mu}{2}+\frac{1-\sqrt{\delta}}{2}} \big(2^2(1+t)^2\big)^{\frac{\sqrt{\delta}-1}{2}} \mathsf{F}\left(\tfrac{1-\sqrt{\delta}}{2},\tfrac{1-\sqrt{\delta}}{2};1;0\right)=2^{\sqrt{\delta}-1}, \label{value E(t,x:t,x)}\\
E(t,x;0,x\pm t;\mu,\nu^2) & = (1+t)^{-\frac{\mu}{2}+\frac{1-\sqrt{\delta}}{2}} \big((t+2)^2-t^2\big)^{\frac{\sqrt{\delta}-1}{2}} \mathsf{F}\left(\tfrac{1-\sqrt{\delta}}{2},\tfrac{1-\sqrt{\delta}}{2};1;0\right)=2^{\sqrt{\delta}-1}(1+t)^{-\frac{\mu}{2}}, \notag
\end{align} plugging the right hand side of the last chain of equalities in $J_1$, we get 
\begin{align}
J_1 &= \frac{1}{2^{\sqrt{\delta}}} \int_0^t  \bigg[u'_0(y)E(t,x;b,y;\mu,\nu^2)-u_0(y)\frac{\partial E}{\partial y}(t,x;b,y;\mu,\nu^2)\bigg]^{y=x+t-b}_{y=x-t+b} \mathrm{d}b \notag\\
& = -  u_0(x)+ \frac{1}{2}(1+t)^{-\frac{\mu}{2}}\big[ u_0(x+t)+ u_0(x-t) \big] \notag \\
& \qquad + \frac{1}{2^{\sqrt{\delta}}} \int_0^t  \bigg[ u_0(y) \bigg(\frac{\partial E}{\partial b} (t,x;b,y;\mu,\nu^2)-2\frac{\partial E}{\partial y} (t,x;b,y;\mu,\nu^2)\bigg)\bigg]_{y=x+t-b} \mathrm{d}b \notag \\
& \qquad+ \frac{1}{2^{\sqrt{\delta}}} \int_0^t  \bigg[ u_0(y) \bigg(\frac{\partial E}{\partial b} (t,x;b,y;\mu,\nu^2)+2\frac{\partial E}{\partial y} (t,x;b,y;\mu,\nu^2)\bigg)\bigg]_{y=x-t+b} \mathrm{d}b. \label{formula J1}
\end{align} Before plugging this expression in \eqref{intermediate estimate I1}, let us rewrite the integral term in the extreme $b=t-|x-y|$ on the right hand side of \eqref{intermediate estimate I1} in a more convenient way, namely,
\begin{align}
& \int_{x-t}^{x+t}  u_0(y) \bigg[\frac{\partial E}{\partial b} (t,x;b,y;\mu,\nu^2)-\frac{\mu}{1+b}E(t,x;b,y;\mu,\nu^2)\bigg]_{b=t-|x-y|}   \, \mathrm{d}y \notag \\
&  \qquad = \int_{x}^{x+t}  u_0(y) \bigg[\frac{\partial E}{\partial b} (t,x;b,y;\mu,\nu^2)-\frac{\mu}{1+b}E(t,x;b,y;\mu,\nu^2)\bigg]_{b=t+x-y}   \, \mathrm{d}y \notag \\
&  \qquad \quad  + \int_{x-t}^{x}  u_0(y) \bigg[\frac{\partial E}{\partial b} (t,x;b,y;\mu,\nu^2)-\frac{\mu}{1+b}E(t,x;b,y;\mu,\nu^2)\bigg]_{b=t-x+y}   \, \mathrm{d}y \notag \\
&  \qquad = \int_{0}^{t}  u_0(x+t-b) \bigg[\frac{\partial E}{\partial b} (t,x;b,y;\mu,\nu^2)-\frac{\mu}{1+b}E(t,x;b,y;\mu,\nu^2)\bigg]_{y=x+t-b}   \, \mathrm{d}y \notag \\
&  \qquad \quad  + \int_{0}^{t}  u_0(x-t+b) \bigg[\frac{\partial E}{\partial b} (t,x;b,y;\mu,\nu^2)-\frac{\mu}{1+b}E(t,x;b,y;\mu,\nu^2)\bigg]_{y=x-t+b}   \, \mathrm{d}y. \label{estimate J2+J3}
\end{align}  Combining \eqref{intermediate estimate I1}, \eqref{formula J1} and \eqref{estimate J2+J3}, it follows
\begin{align*}
I_1+I_3 &=  -  u_0(x)+ \frac{1}{2}(1+t)^{-\frac{\mu}{2}}\big[ u_0(x+t)+ u_0(x-t) \big] \\
& \qquad + \frac{1}{2^{\sqrt{\delta}}} \int_{x-t}^{x+t}  u_0(y) \bigg[-\frac{\partial E}{\partial b} (t,x;b,y;\mu,\nu^2)+\frac{\mu}{1+b}E(t,x;b,y;\mu,\nu^2)\bigg]_{b=0} \mathrm{d}y \\
& \qquad + \frac{1}{2^{\sqrt{\delta}}} \int_0^t  \bigg[ u_0(y) \bigg(2\frac{\partial E}{\partial b} (t,x;b,y;\mu,\nu^2)-2\frac{\partial E}{\partial y} (t,x;b,y;\mu,\nu^2)-\frac{\mu}{1+b}E(t,x;b,y;\mu,\nu^2)\bigg)\bigg]_{y=x+t-b} \mathrm{d}b  \\
& \qquad+ \frac{1}{2^{\sqrt{\delta}}}\int_0^t  \bigg[ u_0(y) \bigg(2\frac{\partial E}{\partial b} (t,x;b,y;\mu,\nu^2)+2\frac{\partial E}{\partial y} (t,x;b,y;\mu,\nu^2)-\frac{\mu}{1+b}E(t,x;b,y;\mu,\nu^2)\bigg)\bigg]_{y=x-t+b} \mathrm{d}b.
\end{align*} Next, we shall prove that the  functions that multiply $u_0(y)$ in the last two integrals in the previous formula for $I_1+I_3$ are identically zero on the domain of integration.
Using the identities
\begin{align*}
\frac{\partial z}{\partial b}(t,x;b,y) & =\frac{4(1+t)\big[(y-x)^2-(t-b)(t+b+2)\big]}{\big[(t+b+2)^2-(y-x)^2\big]^2}, \\
\frac{\partial z}{\partial y}(t,x;b,y) & =-\frac{8(y-x)(1+t)(1+b)}{\big[(t+b+2)^2-(y-x)^2\big]^2},
\end{align*} and the recursive relation for the derivative of a hypergeometric function, we get 
\begin{align*}
\frac{\partial E}{\partial b}(t,x;b,y;\mu,\nu^2) &= (1+t)^{-\frac{\mu}{2}+\frac{1-\sqrt{\delta}}{2}} (1+b)^{\frac{\mu}{2}+\frac{1-\sqrt{\delta}}{2}} \big((t+b+2)^2-(y-x)^2 \big)^{\frac{\sqrt{\delta}-1}{2}} \\
& \quad \times \bigg[\mathsf{F}_z\left(\tfrac{1-\sqrt{\delta}}{2}, \tfrac{1-\sqrt{\delta}}{2}; 1;z \right) \frac{\partial z}{\partial b}+\big(\tfrac{\mu}{2}+\tfrac{1-\sqrt{\delta}}{2}\big) (1+b)^{-1} \mathsf{F}\left(\tfrac{1-\sqrt{\delta}}{2}, \tfrac{1-\sqrt{\delta}}{2}; 1;z \right)
\\
& \quad \qquad + (\sqrt{\delta}-1)\big((t+b+2)^2-(y-x)^2 \big)^{-1} (t+b+2) \, \mathsf{F}\left(\tfrac{1-\sqrt{\delta}}{2}, \tfrac{1-\sqrt{\delta}}{2}; 1;z \right)\bigg] \\
&= (1+t)^{-\frac{\mu}{2}+\frac{1-\sqrt{\delta}}{2}} (1+b)^{\frac{\mu}{2}+\frac{1-\sqrt{\delta}}{2}} \big((t+b+2)^2-(y-x)^2 \big)^{\frac{\sqrt{\delta}-1}{2}} \\
& \quad \times \bigg[(1-\sqrt{\delta})^2(1+t)\big((y-x)^2-(t-b)(t+b+2)\big) \big((t+b+2)^2-(y-x)^2 \big)^{-2}\mathsf{F}\left(\tfrac{3-\sqrt{\delta}}{2}, \tfrac{3-\sqrt{\delta}}{2}; 2;z \right) \\ 
& \quad \qquad +\big(\tfrac{\mu}{2}+\tfrac{1-\sqrt{\delta}}{2}\big) (1+b)^{-1} \mathsf{F}\left(\tfrac{1-\sqrt{\delta}}{2}, \tfrac{1-\sqrt{\delta}}{2}; 1;z \right)
\\
& \quad \qquad + (\sqrt{\delta}-1)\big((t+b+2)^2-(y-x)^2 \big)^{-1} (t+b+2) \, \mathsf{F}\left(\tfrac{1-\sqrt{\delta}}{2}, \tfrac{1-\sqrt{\delta}}{2}; 1;z \right)\bigg], \\
\frac{\partial E}{\partial y}(t,x;b,y;\mu,\nu^2) &= (1+t)^{-\frac{\mu}{2}+\frac{1-\sqrt{\delta}}{2}} (1+b)^{\frac{\mu}{2}+\frac{1-\sqrt{\delta}}{2}} \big((t+b+2)^2-(y-x)^2 \big)^{\frac{\sqrt{\delta}-1}{2}} \\
& \quad \times \bigg[\mathsf{F}_z\left(\tfrac{1-\sqrt{\delta}}{2}, \tfrac{1-\sqrt{\delta}}{2}; 1;z \right) \frac{\partial z}{\partial y}
 - (\sqrt{\delta}-1)\big((t+b+2)^2-(y-x)^2 \big)^{-1} (y-x) \, \mathsf{F}\left(\tfrac{1-\sqrt{\delta}}{2}, \tfrac{1-\sqrt{\delta}}{2}; 1;z \right)\bigg] \\
&= (1+t)^{-\frac{\mu}{2}+\frac{1-\sqrt{\delta}}{2}} (1+b)^{\frac{\mu}{2}+\frac{1-\sqrt{\delta}}{2}} \big((t+b+2)^2-(y-x)^2 \big)^{\frac{\sqrt{\delta}-1}{2}} \\
& \quad \times \bigg[-2(1-\sqrt{\delta})^2(y-x)(1+t)(1+b)\big((t+b+2)^2-(y-x)^2 \big)^{-2}\mathsf{F}\left(\tfrac{3-\sqrt{\delta}}{2}, \tfrac{3-\sqrt{\delta}}{2}; 2;z \right) 
\\
& \quad \qquad - (\sqrt{\delta}-1)\big((t+b+2)^2-(y-x)^2 \big)^{-1} (y-x) \, \mathsf{F}\left(\tfrac{1-\sqrt{\delta}}{2}, \tfrac{1-\sqrt{\delta}}{2}; 1;z \right)\bigg], \\
\frac{ \mu}{1+ b}E(t,x;b,y;\mu,\nu^2) & = \mu (1+t)^{-\frac{\mu}{2}+\frac{1-\sqrt{\delta}}{2}} (1+b)^{\frac{\mu}{2}-1+\frac{1-\sqrt{\delta}}{2}} \big((t+b+2)^2-(y-x)^2 \big)^{\frac{\sqrt{\delta}-1}{2}}\mathsf{F}\left(\tfrac{1-\sqrt{\delta}}{2}, \tfrac{1-\sqrt{\delta}}{2}; 1;z \right).
\end{align*} Evaluating these functions in $y=x\pm (t-b)$, we get 
\begin{align*}
\frac{\partial E}{\partial b}(t,x;b,y;\mu,\nu^2)\Big|_{y=x\pm (t-b)} & = 2^{\sqrt{\delta}-1}(1+t)^{-\frac{\mu}{2}-1} (1+b)^{\frac{\mu}{2}-1} \\ & \quad \times\Big[-2^{-3}(1-\sqrt{\delta})^{2}(t-b)+\big(\tfrac{\mu}{2}+\tfrac{1-\sqrt{\delta}}{2}\big)(1+t)+2^{-2}(\sqrt{\delta}-1)(t+b+2)\Big], \\
\frac{\partial E}{\partial y}(t,x;b,y;\mu,\nu^2)\Big|_{y=x\pm (t-b)} & = \mp \, 2^{\sqrt{\delta}-1}(1+t)^{-\frac{\mu}{2}-1} (1+b)^{\frac{\mu}{2}-1} \Big[2^{-3}(1-\sqrt{\delta})^{2}(t-b)+2^{-2}(\sqrt{\delta}-1)(t-b)\Big], \\
\frac{ \mu}{1+ b}E(t,x;b,y;\mu,\nu^2)\Big|_{y=x\pm (t-b)} & =\, 2^{\sqrt{\delta}-1}\mu (1+t)^{-\frac{\mu}{2}} (1+b)^{\frac{\mu}{2}-1} .
\end{align*} Therefore,
\begin{align}
& \bigg(2\frac{\partial E}{\partial b} (t,x;b,y;\mu,\nu^2)\mp 2\frac{\partial E}{\partial y} (t,x;b,y;\mu,\nu^2)-\frac{\mu}{1+b}E(t,x;b,y;\mu,\nu^2)\bigg)_{y=x\pm (t-b)}\notag \\
& \qquad = 2^{\sqrt{\delta}-1}(1+t)^{-\frac{\mu}{2}-1} (1+b)^{\frac{\mu}{2}-1}  \Big[ 2\big(\tfrac{\mu}{2}+\tfrac{1-\sqrt{\delta}}{2}\big)(1+t)+2^{-1}(\sqrt{\delta}-1)(2t+2)-\mu (1+t)\Big]=0. \label{relation 2 dE/db -2 dE/dy}
\end{align} Summarizing, we proved
\begin{align}
I_1+I_3 &=  -  u_0(x)+ \frac{1}{2}(1+t)^{-\frac{\mu}{2}}\big[ u_0(x+t)+ u_0(x-t) \big]+ \frac{1}{2^{\sqrt{\delta}}} \int_{x-t}^{x+t}  u_0(y) \bigg[-\frac{\partial E}{\partial b} (t,x;b,y;\mu,\nu^2)\bigg]_{b=0} \mathrm{d} y \notag  \\
& \qquad + \frac{1}{2^{\sqrt{\delta}}} \int_{x-t}^{x+t} \mu\, u_0(y) E(t,x;0,y;\mu,\nu^2) \, \mathrm{d}y . \label{I1+I3}
\end{align} We consider now the term $I_2=\frac{1}{2^{\sqrt{\delta}}} \int_0^t \int_{x-t+b}^{x+t-b} b u_1''(y) E(t,x;b,y;\mu,\nu^2) \, \mathrm{d}y\, \mathrm{d}b$. We will proceed similarly as for $I_1$. Integration by parts leads to
\begin{align*}
& \int_{x-t+b}^{x+t-b} b\, u_1''(y) E(t,x;b,y;\mu,\nu^2) \, \mathrm{d}y \\
& \quad = \bigg[ u_1'(y)\, b E(t,x;b,y;\mu,\nu^2)- u_1(y)\, b \frac{\partial E}{\partial y}(t,x;b,y;\mu,\nu^2)\bigg]^{y=x+t-b}_{y=x-t+b} + \int_{x-t+b}^{x+t-b} u_1(y)\, b\frac{\partial^2 E}{\partial y^2} (t,x;b,y;\mu,\nu^2) \, \mathrm{d}y \\
& \quad = \bigg[ u_1'(y)\, b E(t,x;b,y;\mu,\nu^2)- u_1(y) \,b \frac{\partial E}{\partial y}(t,x;b,y;\mu,\nu^2)\bigg]^{y=x+t-b}_{y=x-t+b} +\int_{x-t+b}^{x+t-b} u_1(y)\,  b \frac{\partial^2 E}{\partial b^2} (t,x;b,y;\mu,\nu^2) \, \mathrm{d}y \\ 
&  \qquad + \int_{x-t+b}^{x+t-b} u_1(y)\bigg[-\frac{\mu b}{1+b}\frac{\partial E}{\partial b}(t,x;b,y;\mu,\nu^2) + \frac{(\mu+\nu^2)\, b}{(1+b)^2}E(t,x;b,y;\mu,\nu^2)  \bigg] \, \mathrm{d}y.
\end{align*} Thus,
\begin{align*}
I_2 & = \frac{1}{2^{\sqrt{\delta}}} \int_0^t  \bigg[ u_1'(y)\, b E(t,x;b,y;\mu,\nu^2)- u_1(y) \,b \frac{\partial E}{\partial y}(t,x;b,y;\mu,\nu^2)\bigg]^{y=x+t-b}_{y=x-t+b} \mathrm{d}b \\
& \qquad +\frac{1}{2^{\sqrt{\delta}}} \int_0^t  \int_{x-t+b}^{x+t-b} u_1(y)\,  b \frac{\partial^2 E}{\partial b^2} (t,x;b,y;\mu,\nu^2) \, \mathrm{d}y \, \mathrm{d}b\\ 
&  \qquad - \frac{1}{2^{\sqrt{\delta}}} \int_0^t  \int_{x-t+b}^{x+t-b} u_1(y)\frac{\mu b}{1+b}\frac{\partial E}{\partial b}(t,x;b,y;\mu,\nu^2)  \, \mathrm{d}y\, \mathrm{d}b \\
& \qquad + \frac{1}{2^{\sqrt{\delta}}} \int_0^t  \int_{x-t+b}^{x+t-b} u_1(y) \frac{(\mu+\nu^2)\, b}{(1+b)^2}E(t,x;b,y;\mu,\nu^2)   \, \mathrm{d}y\, \mathrm{d}b \doteq \widetilde{J}_1+\widetilde{J}_2+\widetilde{J}_3+\widetilde{J}_4.
\end{align*}

 Employing Fubini's theorem and integration by parts, we get
\begin{align*}
\widetilde{J}_2 & = \frac{1}{2^{\sqrt{\delta}}} \int_0^t  \int_{x-t+b}^{x+t-b} u_1(y)\,  b\frac{\partial^2 E}{\partial b^2} (t,x;b,y;\mu,\nu^2) \, \mathrm{d}y \, \mathrm{d}b = \frac{1}{2^{\sqrt{\delta}}} \int_{x-t}^{x+t}  u_1(y) \int_{0}^{t-|x-y|} b \frac{\partial^2 E}{\partial b^2} (t,x;b,y;\mu,\nu^2)\, \mathrm{d}b \, \mathrm{d}y  \\
& = \frac{1}{2^{\sqrt{\delta}}} \int_{x-t}^{x+t}  u_1(y) \bigg[b\frac{\partial E}{\partial b} (t,x;b,y;\mu,\nu^2)\bigg]_{b=0}^{b=t-|x-y|}   \, \mathrm{d}y- \frac{1}{2^{\sqrt{\delta}}} \int_{x-t}^{x+t}  u_1(y) \int_{0}^{t-|x-y|} \frac{\partial E}{\partial b} (t,x;b,y;\mu,\nu^2)\, \mathrm{d}b \, \mathrm{d}y  \\
& = \frac{1}{2^{\sqrt{\delta}}} \int_{x-t}^{x+t}  u_1(y) \bigg[b\frac{\partial E}{\partial b} (t,x;b,y;\mu,\nu^2)-E(t,x;b,y;\mu,\nu^2)\bigg]_{b=0}^{b=t-|x-y|}   \, \mathrm{d}y \\
& = \frac{1}{2^{\sqrt{\delta}}} \int_{x-t}^{x+t}  u_1(y) E(t,x;0,y;\mu,\nu^2)   \, \mathrm{d}y + \frac{1}{2^{\sqrt{\delta}}} \int_{x}^{x+t}  u_1(y) \bigg[b\frac{\partial E}{\partial b} (t,x;b,y;\mu,\nu^2)-E(t,x;b,y;\mu,\nu^2)\bigg]_{b=t-y+x}   \, \mathrm{d}y \\ 
& \quad + \frac{1}{2^{\sqrt{\delta}}} \int_{x-t}^{x}  u_1(y) \bigg[b\frac{\partial E}{\partial b} (t,x;b,y;\mu,\nu^2)-E(t,x;b,y;\mu,\nu^2)\bigg]_{b=t-x+y}   \, \mathrm{d}y \\
& = \frac{1}{2^{\sqrt{\delta}}} \int_{x-t}^{x+t}  u_1(y) E(t,x;0,y;\mu,\nu^2)   \, \mathrm{d}y +\frac{1}{2^{\sqrt{\delta}}} \int_{0}^{t}  u_1(x+t-b) \bigg[b\frac{\partial E}{\partial b} (t,x;b,y;\mu,\nu^2)-E(t,x;b,y;\mu,\nu^2)\bigg]_{y=x+t-b}   \, \mathrm{d}b \\ 
& \quad + \frac{1}{2^{\sqrt{\delta}}} \int_{0}^{t}  u_1(x-t+b) \bigg[b\frac{\partial E}{\partial b} (t,x;b,y;\mu,\nu^2)-E(t,x;b,y;\mu,\nu^2)\bigg]_{y=x-t+b}   \, \mathrm{d}b
\end{align*} and
\begin{align*}
\widetilde{J}_3 &= - \frac{1}{2^{\sqrt{\delta}}} \int_0^t  \int_{x-t+b}^{x+t-b} u_1(y) \frac{\mu b}{1+b}\frac{\partial E}{\partial b}(t,x;b,y;\mu,\nu^2)  \, \mathrm{d}y\, \mathrm{d}b = - \frac{1}{2^{\sqrt{\delta}}} \int_{x-t}^{x+t}  u_1(y) \int_{0}^{t-|x-y|}  \frac{\mu b}{1+b}\frac{\partial E}{\partial b}(t,x;b,y;\mu,\nu^2) \, \mathrm{d}b  \, \mathrm{d}y \\
&= -\frac{1}{2^{\sqrt{\delta}}} \int_{x-t}^{x+t}  u_1(y)\bigg[\frac{\mu b}{1+b}E(t,x;b,y;\mu,\nu^2)\bigg]_{b=0}^{b=t-|x-y|}   \mathrm{d}y + \frac{1}{2^{\sqrt{\delta}}} \int_{x-t}^{x+t}  u_1(y) \int_{0}^{t-|x-y|}  \frac{\partial  }{\partial b}\bigg(\frac{\mu b}{1+b}\bigg) E(t,x;b,y;\mu,\nu^2) \, \mathrm{d}b  \, \mathrm{d}y \\
&= -\frac{1}{2^{\sqrt{\delta}}} \int_{x-t}^{x+t}  u_1(y)\bigg[\frac{\mu b}{1+b}E(t,x;b,y;\mu,\nu^2)\bigg]_{b=t-|x-y|}   \mathrm{d}y \\
& \quad  + \frac{1}{2^{\sqrt{\delta}}} \int_{x-t}^{x+t}  u_1(y) \int_{0}^{t-|x-y|}  \bigg(-\frac{\mu b}{(1+b)^2}+\frac{\mu}{1+b}\bigg) E(t,x;b,y;\mu,\nu^2) \, \mathrm{d}b  \, \mathrm{d}y \\
&= -\frac{1}{2^{\sqrt{\delta}}} \int_{x}^{x+t}  u_1(y)\bigg[\frac{\mu b}{1+b}E(t,x;b,y;\mu,\nu^2)\bigg]_{b=t-y+x}   \mathrm{d}y-\frac{1}{2^{\sqrt{\delta}}} \int_{x-t}^{x}  u_1(y)\bigg[\frac{\mu b}{1+b}E(t,x;b,y;\mu,\nu^2)\bigg]_{b=t-x+y}   \mathrm{d}y \\
& \quad  - \frac{1}{2^{\sqrt{\delta}}} \int_0^t \int_{x-t+b}^{x+t-b}  u_1(y)  \frac{\mu b}{(1+b)^2} E(t,x;b,y;\mu,\nu^2)  \, \mathrm{d}y \, \mathrm{d}b -I_4 \\
&= -\frac{1}{2^{\sqrt{\delta}}} \int_{0}^{t}  u_1(x+t-b)\bigg[\frac{\mu b}{1+b}E(t,x;b,y;\mu,\nu^2)\bigg]_{y=x+t-b}   \mathrm{d}y-\frac{1}{2^{\sqrt{\delta}}} \int_{0}^{t}  u_1(x-t+b)\bigg[\frac{\mu b}{1+b}E(t,x;b,y;\mu,\nu^2)\bigg]_{y=x-t+b}   \mathrm{d}y \\
& \quad  - \frac{1}{2^{\sqrt{\delta}}} \int_0^t \int_{x-t+b}^{x+t-b}  u_1(y)  \frac{\mu b}{(1+b)^2} E(t,x;b,y;\mu,\nu^2)  \, \mathrm{d}y \, \mathrm{d}b -I_4.
\end{align*} Let us consider $\widetilde{J}_1$. Since
\begin{align*}
& \int_0^t  \bigg[u'_1(y) \, b E(t,x;b,y;\mu,\nu^2)\bigg]^{y=x+t-b}_{y=x-t+b} \mathrm{d}b \\
& \quad = - \int_0^t  \bigg[\frac{\partial}{\partial b} \big( u_1(x+t-b)\big)\, b E(t,x;b,x+t-b;\mu,\nu^2)+\frac{\partial}{\partial b} \big( u_1(x-t+b)\big)\, b E(t,x;b,x-t+b;\mu,\nu^2)\bigg] \mathrm{d}b \\
& \quad = -  \bigg[ u_1(x+t-b)\, b E(t,x;b,x+t-b;\mu,\nu^2)+ u_1(x-t+b) \, b E(t,x;b,x-t+b;\mu,\nu^2)\bigg]_{b=0}^{b=t}   \\
& \qquad + \int_0^t  \bigg[ u_1(x+t-b) \frac{\partial }{\partial b} \Big(b E(t,x;b,x+t-b;\mu,\nu^2)\Big) + u_1(x-t+b) \frac{\partial }{\partial b} \Big( b E(t,x;b,x-t+b;\mu,\nu^2)\Big)\bigg] \mathrm{d}b \\
& \quad = -  2^{\sqrt{\delta}} \, t  u_1(x) + \int_0^t  \bigg[ u_1(y) \bigg(E(t,x;b,y;\mu,\nu^2)+b\frac{\partial E}{\partial b} (t,x;b,y;\mu,\nu^2)-b\frac{\partial E}{\partial y} (t,x;b,y;\mu,\nu^2)\bigg)\bigg]_{y=x+t-b} \mathrm{d}b  \\
& \qquad+ \int_0^t  \bigg[ u_1(y) \bigg(E(t,x;b,y;\mu,\nu^2)+b\frac{\partial E}{\partial b} (t,x;b,y;\mu,\nu^2)+b\frac{\partial E}{\partial y} (t,x;b,y;\mu,\nu^2)\bigg)\bigg]_{y=x-t+b} \mathrm{d}b,
\end{align*} where in the last step we used \eqref{value E(t,x:t,x)}, then,
\begin{align*}
\widetilde{J}_1 &= \frac{1}{2^{\sqrt{\delta}}} \int_0^t  \bigg[ u_1'(y)\, b E(t,x;b,y;\mu,\nu^2)- u_1(y) \,b \frac{\partial E}{\partial y}(t,x;b,y;\mu,\nu^2)\bigg]^{y=x+t-b}_{y=x-t+b} \mathrm{d}b\\
 & \quad = -   t  u_1(x) +  \frac{1}{2^{\sqrt{\delta}}}  \int_0^t  \bigg[ u_1(y) \bigg(E(t,x;b,y;\mu,\nu^2)+b\frac{\partial E}{\partial b} (t,x;b,y;\mu,\nu^2)-2b\frac{\partial E}{\partial y} (t,x;b,y;\mu,\nu^2)\bigg)\bigg]_{y=x+t-b} \mathrm{d}b  \\
& \qquad+  \frac{1}{2^{\sqrt{\delta}}}  \int_0^t  \bigg[ u_1(y) \bigg(E(t,x;b,y;\mu,\nu^2)+b\frac{\partial E}{\partial b} (t,x;b,y;\mu,\nu^2)+2b\frac{\partial E}{\partial y} (t,x;b,y;\mu,\nu^2)\bigg)\bigg]_{y=x-t+b} \mathrm{d}b.
\end{align*} Summarizing,
\begin{align*}
I_2 & = \widetilde{J}_1+\widetilde{J}_2+\widetilde{J}_3+\widetilde{J}_4 \\
& =   \frac{1}{2^{\sqrt{\delta}}} \int_{x-t}^{x+t}  u_1(y) E(t,x;0,y;\mu,\nu^2)   \, \mathrm{d}y -   t  u_1(x) -I_4-I_5 \\
& \qquad+   \frac{1}{2^{\sqrt{\delta}}}  \int_0^t  \bigg[ u_1(y) \bigg(2b\frac{\partial E}{\partial b} (t,x;b,y;\mu,\nu^2)-2b\frac{\partial E}{\partial y} (t,x;b,y;\mu,\nu^2)-\frac{\mu b}{1+b}E(t,x;b,y;\mu,\nu^2)\bigg)\bigg]_{y=x+t-b} \mathrm{d}b  \\
& \qquad+  \frac{1}{2^{\sqrt{\delta}}}  \int_0^t  \bigg[ u_1(y) \bigg(2b\frac{\partial E}{\partial b} (t,x;b,y;\mu,\nu^2)+2b\frac{\partial E}{\partial y} (t,x;b,y;\mu,\nu^2)-\frac{\mu b}{1+b}E(t,x;b,y;\mu,\nu^2)\bigg)\bigg]_{y=x-t+b} \mathrm{d}b.
\end{align*} Using the identity \eqref{relation 2 dE/db -2 dE/dy}, from the last equality we get
\begin{align}
I_2+I_4+I_5=  \frac{1}{2^{\sqrt{\delta}}} \int_{x-t}^{x+t}  u_1(y) E(t,x;0,y;\mu,\nu^2)   \, \mathrm{d}y -   t  u_1(x) . \label{I2+I4+I5}
\end{align} Finally, if we combine \eqref{I1+I3} and \eqref{I2+I4+I5}, we arrive at
\begin{align*}
w(t,x)& = I_1+I_2+I_3+I_4+I_5 \\ 
& =  -  u_0(x) - t  u_1(x)+ \frac{1}{2}(1+t)^{-\frac{\mu}{2}}\big[ u_0(x+t)+ u_0(x-t) \big]+ \frac{1}{2^{\sqrt{\delta}}} \int_{x-t}^{x+t}  u_0(y) \bigg[-\frac{\partial E}{\partial b} (t,x;b,y;\mu,\nu^2)\bigg]_{b=0} \mathrm{d}y   \\
 & \quad +\frac{1}{2^{\sqrt{\delta}}} \int_{x-t}^{x+t}  \big( u_1(y)+\mu\, u_0(y)\big) E(t,x;0,y;\mu,\nu^2)   \, \mathrm{d}y .
\end{align*} So, we proved that $u^{\h}$ defined in \eqref{homogeneous sol 1d} solves \eqref{homog CP 1d}. 

\subsection{Final remarks on the 1d case}
Combining the results from Subsections \ref{Subsection inhom 1d CP vanishing data} and \ref{Subsection hom 1d CP}, we see that $u=u^{\h}+u^{\ih}$ is a solution of the Cauchy problem \eqref{inhomog CP} for $n=1$ as stated in Theorem \ref{Thm representation formula 1d case}. 

Let us remark that for $\mu=\nu^2=0$ we have $\delta=1$ and $\mathsf{F}(0,0;1;z)=1$, so that $E(t,x;b,y;0,0)=1$. This means that \eqref{representation formula 1d case} coincides with d'Alembert's representation formula for $\mu=\nu^2=0$. As in d'Alembert's representation formula for the classical wave equation, in the one dimensional case we have no loss of regularity for the solution in comparison with initial data. However, differently from d'Alembert's representation formula, we have that the first data appears, in general, also in an integral term.

\section{Odd dimensional case} \label{Section n odd}

In this section we will prove Theorem \ref{Thm representation formula n dimensional odd case} with the method of spherical means (see \cite{John55} for further details).

\subsection{Spherical means}

Let $u=u(t,x)$ solve \eqref{inhomog CP}. We define
\begin{align*}
I_r[u](t,x) \doteq \frac{1}{\omega_{n-1}}\int_{|\omega|=1}u(t,x+r \omega) \, \mathrm{d}\sigma_\omega = \frac{1}{\omega_{n-1}r^{n-1}}\int_{\partial B_r(x)}u(t,z) \, \mathrm{d}\sigma_z = \fint_{\partial B_r(x)}u(t,z) \, \mathrm{d}\sigma_z,
\end{align*} where $\omega_{n-1}$ is the $(n-1)$-dimensional measure of the unit sphere of $\mathbb{R}^n$ and, analogously, 
\begin{align*}
I_r[u_j](x) \doteq  \fint_{\partial B_r(x)}u_j(z) \, \mathrm{d}\sigma_z \qquad \mbox{for} \ \ j=0,1.
\end{align*} Moreover, we introduce the operator $\Omega_r$ as follows:
\begin{align}
\Omega_r[u](t,x) &\doteq \bigg(\frac{1}{r}\frac{\partial}{\partial r}\bigg)^{k-1}\Big(r^{2k-1} I_r[u](t,x)\Big), \label{def Omega u}\\
\Omega_r[u_j](x) &\doteq \bigg(\frac{1}{r}\frac{\partial}{\partial r}\bigg)^{k-1}\Big(r^{2k-1} I_r[u_j](x)\Big) \qquad \mbox{for} \ \ j=0,1, \notag
\end{align} where $k$ satisfies the relation $n=2k+1$. We remark that the equality $$I_r[u_{tt}](t,x)=\fint_{\partial B_r(x)}u_{tt}(t,z) \, \mathrm{d}\sigma_z= \left(\frac{\partial}{\partial t}\right)^2 \fint_{\partial B_r(x)}u(t,z) \, \mathrm{d}\sigma_z=\left(\frac{\partial}{\partial t}\right)^2 \Big(I_r[u](t,x)\Big)$$ implies
\begin{align*}
\Omega_r[u_{tt}](t,x) &= \bigg(\frac{1}{r}\frac{\partial}{\partial r}\bigg)^{k-1}\Big(r^{2k-1} I_r[u_{tt}](t,x)\Big) = \bigg(\frac{1}{r}\frac{\partial}{\partial r}\bigg)^{k-1}\left(r^{2k-1} \left(\frac{\partial}{\partial t}\right)^2 \Big(I_r[u](t,x)\Big)\right) \\  &= \left(\frac{\partial}{\partial t}\right)^2 \bigg(\frac{1}{r}\frac{\partial}{\partial r}\bigg)^{k-1}\left(r^{2k-1}  \Big(I_r[u](t,x)\Big)\right) = \left(\frac{\partial}{\partial t}\right)^2 \Omega_r[u](t,x).
\end{align*} Similarly, one can prove
\begin{align*}
\Omega_r\left[\frac{\mu}{1+t}u_{t}\right](t,x)& =\frac{\mu}{1+t} \bigg(\frac{\partial}{\partial t}\bigg)\, \Omega_r[u](t,x)  , \\
\Omega_r\left[\frac{\nu^2}{(1+t)^2}u \right](t,x)& =\frac{\nu^2}{(1+t)^2} \,  \Omega_r[u](t,x) . 
\end{align*} Due to the linearity of the operator $\Omega_r$, we get that $\Omega_r[u]$ solves
\begin{align*}
\left(\frac{\partial}{\partial t}\right)^2 \Omega_r[u](t,x)+ \frac{\mu}{1+t} \left(\frac{\partial}{\partial t}\right)\, \Omega_r[u](t,x)+ \frac{\nu^2}{(1+t)^2} \,  \Omega_r[u](t,x) = \Omega_r[\Delta u](t,x)+\Omega_r[f](t,x). 
\end{align*}	 Next, we shall express in a more convenient way the action of $\Omega_r$ on the Laplacian of $u$. This relation is well-known in the literature, but we will prove it in few steps for the ease of the reader. Let us calculate the derivative of order 2 of $I_r[u]$. By Green's formula we get
\begin{align*}
\frac{\partial}{\partial r} I_r[u](t,x) & = \frac{1}{\omega_{n-1}}\int_{|\omega|=1}\nabla u(t,x+r \omega) \cdot \frac{\partial}{\partial r}\big(x+r \omega \big)\, \mathrm{d}\sigma_\omega =  \frac{1}{\omega_{n-1}}\int_{|\omega|=1}\nabla u(t,x+r \omega) \cdot \omega \, \mathrm{d}\sigma_\omega \\
& =  \frac{r}{\omega_{n-1}}\int_{|\omega|\leqslant1}\Delta u(t,x+r \omega) \, \mathrm{d}\omega =  \frac{1}{\omega_{n-1} r^{n-1}}\int_{B_r(x)}\Delta u(t,z) \, \mathrm{d}z \\ & =  \frac{1}{\omega_{n-1} r^{n-1}}\int_0^r\int_{\partial B_\varrho(x)}\Delta u(t,\omega) \, \mathrm{d}\sigma_{\omega} \, \mathrm{d}\varrho.
\end{align*} A further differentiation with respect to $r$ provides
\begin{align*}
\bigg(\frac{\partial}{\partial r}\bigg)^2 I_r[u](t,x) &= \frac{1}{\omega_{n-1} r^{n-1}}\int_{\partial B_r(x)}\Delta u(t,\omega) \, \mathrm{d}\sigma_{\omega}   -\frac{n-1}{\omega_{n-1} r^{n}}\int_0^r\int_{\partial B_\varrho(x)}\Delta u(t,\omega) \, \mathrm{d}\sigma_{\omega} \, \mathrm{d}\varrho \\
&= \frac{1}{\omega_{n-1} r^{n-1}}\int_{\partial B_r(x)}\Delta u(t,\omega) \, \mathrm{d}\sigma_{\omega}   -\frac{n-1}{r} \frac{\partial}{\partial r} I_r[u](t,x) ,
\end{align*} that is,
\begin{align*}
\bigg(\frac{\partial}{\partial r}\bigg)^2 I_r[u](t,x)+\frac{n-1}{r} \frac{\partial}{\partial r} I_r[u](t,x)=  \fint_{B_r(x)}\Delta u(t,\omega) \, \mathrm{d}\sigma_{\omega} =  I_r[\Delta u](t,x).
\end{align*} The previous relation implies
\begin{align*}
\bigg(\frac{\partial}{\partial r}\bigg)^2 \Omega_r[u](t,x) & =\bigg(\frac{\partial}{\partial r}\bigg)^2 \bigg(\frac{1}{r}\frac{\partial }{\partial r}\bigg)^{k-1}\left(r^{2k-1} I_r[u](t,x)\right) = \bigg(\frac{1}{r}\frac{\partial}{\partial r}\bigg)^{k}\left(r^{2k} \frac{\partial}{\partial r}I_r[u](t,x)\right) \\
&= \bigg(\frac{1}{r}\frac{\partial}{\partial r}\bigg)^{k-1}\left[r^{2k-1} \bigg(\frac{\partial}{\partial r}\bigg)^2 I_r[u](t,x)+2k \, r^{2k-2} \frac{\partial}{\partial r} I_r[u](t,x)\right] \\
&= \bigg(\frac{1}{r}\frac{\partial}{\partial r}\bigg)^{k-1}\left[r^{2k-1}\left( \bigg(\frac{\partial}{\partial r}\bigg)^2 I_r[u](t,x)+\frac{n-1}{r}\frac{\partial}{\partial r} I_r[u](t,x)\right)\right] = \bigg(\frac{1}{r}\frac{\partial}{\partial r}\bigg)^{k-1}\left[r^{2k-1}\left(  I_r[\Delta u](t,x)\right)\right]\\ &=  \Omega_r[\Delta u](t,x),
\end{align*}
where in the second equality we used the identity 
\begin{align*}
\bigg(\frac{\mathrm{d}}{\mathrm{d}r}\bigg)^2 \bigg(\frac{1}{r}\frac{\mathrm{d}}{\mathrm{d} r}\bigg)^{k-1}\left(r^{2k-1} \phi(r)\right) = \bigg(\frac{1}{r}\frac{\mathrm{d}}{\mathrm{d} r}\bigg)^{k}\left(r^{2k} \frac{\mathrm{d} \phi}{\mathrm{d}r}(r)\right)
\end{align*} whose validity can be proved by using an inductive argument (cf. \cite[Lemma 2, Section 2.4.1]{Evans}). If we introduce the function $v=v(r,t;x)=\Omega_r[u](t,x)$, then, $v$ solves the following initial boundary value problem depending on the parameter $x\in \mathbb{R}^n$:
\begin{align}\label{IBV problem}
\begin{cases}
\partial_t^2 v(r,t;x)-\partial_r^2 v(r,t;x)+\frac{\mu}{1+t}\partial_t v(r,t;x)+\frac{\nu^2}{(1+t)^2} v(r,t;x) = \Omega_r[f](t,x), \qquad t>0, r>0, \\
v(r,0;x)= \Omega_r[u_0](x), \ \ \, \qquad r>0, \\
\partial_t v(r,0;x)= \Omega_r[u_1](x), \qquad r>0, \\
v(0,t;x)=0, \qquad \qquad \qquad \ \, t \geqslant 0.
\end{cases}
\end{align} In order to get the boundary condition in \eqref{IBV problem}, we employed the following formula
\begin{align}\label{action of the Omega operator}
 \bigg(\frac{1}{r}\frac{\mathrm{d}}{\mathrm{d} r}\bigg)^{k-1}\left(r^{2k-1} \phi(r)\right) &= \sum_{j=0}^{k-1} \beta^{(k)}_j r^{j+1}\frac{\mathrm{d}^j \phi}{\mathrm{d}r^j}(r),
\end{align} where the constants $\{\beta^{(k)}_j\}_{ j=0,\cdots, k-1}$ are independent of $\phi$ and, in particular, $\beta^{(k)}_0=(2k-1)!!$ (see also, for example, \cite[Lemma 2, Section 2.4.1]{Evans}).

Since $I_r[u](t,x)$ can be extended to an even function for $r<0$, $\Omega_r[u](t,x)$ has a natural extension as odd function with respect to $r$ for $r<0$, due to \eqref{def Omega u}. We denote the odd extensions of $v$, $\Omega_r[f]$ and $\Omega_r[u_j]$ for $j=0,1$ by
\begin{align*}
\widetilde{v}(r,t;x)& \doteq \begin{cases} v(r,t;x) & \mbox{if} \ \ r\geqslant 0, \\ -v(-r,t;x) & \mbox{if} \ \ r\leqslant 0 , \end{cases} \qquad  \widetilde{\Omega}_r[f](t,x) \doteq\begin{cases} \Omega_r[f](t,x) & \mbox{if} \ \ r\geqslant 0, \\ -\Omega_{-r}[f](t,x) & \mbox{if} \ \ r\leqslant 0 , \end{cases} \\   \widetilde{\Omega}_r[u_j](x) & \doteq\begin{cases} \Omega_r[u_j](x) & \mbox{if} \ \ r\geqslant 0, \\ -\Omega_{-r}[u_j](x) & \mbox{if} \ \ r\leqslant 0 , \end{cases} 
\end{align*} respectively. Therefore, $\widetilde{v}$ solves the Cauchy problem depending on the parameter $x\in \mathbb{R}^n$
\begin{align}\label{Cauchy problem for the spherical mean}
\begin{cases}
\partial_t^2 \widetilde{v}(r,t;x)-\partial_r^2 \widetilde{v}(r,t;x)+\frac{\mu}{1+t}\partial_t \widetilde{v}(r,t;x)+\frac{\nu^2}{(1+t)^2} \widetilde{v}(r,t;x) = \widetilde{\Omega}_r[f](t,x), \qquad t>0, r\in \mathbb{R}, \\
\widetilde{v}(r,0;x)=\widetilde{\Omega}_r[u_0](x), \ \ \, \qquad  r\in \mathbb{R}, \\
\partial_t \widetilde{v}(r,0;x)= \widetilde{\Omega}_r[u_1](x), \qquad  r\in \mathbb{R}.
\end{cases}
\end{align} Hence, \eqref{Cauchy problem for the spherical mean} is a Cauchy problem for an inhomogeneous linear wave equation with scale-invariant damping and mass in the one dimensional case. Thanks to Theorem \ref{Thm representation formula 1d case}, we have an explicit representation formula for $\widetilde{v}$, namely,
\begin{align}
\widetilde{v}(r,t;x)& =\frac{1}{2}(1+t)^{-\frac{\mu}{2}}\Big(\widetilde{\Omega}_{r+t}[u_0](x)+\widetilde{\Omega}_{r-t}[u_0](x)\Big)+\frac{1}{2^{\sqrt{\delta}}}\int_{r-t}^{r+t} \widetilde{\Omega}_s[u_0](x) K_0(t,r;s;\mu,\nu^2)\, \mathrm{d}s \notag \notag \\ & \quad +\frac{1}{2^{\sqrt{\delta}}}\int_{r-t}^{r+t}\Big(\widetilde{\Omega}_s[u_1](x)+\mu \, \widetilde{\Omega}_s[u_0](x)\Big) K_1(t,r;s;\mu,\nu^2)\, \mathrm{d}s  +\frac{1}{2^{\sqrt{\delta}}} \int_0^t \int_{r-t+b}^{r+t-b} \widetilde{\Omega}_s[f](b,x) E(t,r;b,s;\mu,\nu^2) \, \mathrm{d}s\, \mathrm{d}b. \label{representation formula v tilde}
\end{align}
In the next subsection, we will apply a limit argument to \eqref{representation formula v tilde} in order to derive a representation formula for \eqref{inhomog CP} in the odd dimensional case.

\subsection{Representation formula via a limit argument} From \eqref{action of the Omega operator} it follows that
\begin{align*}
u(t,x)= \lim_{r\to 0} I_r[u](t,x) =  \lim_{r\to 0} \frac{1}{\beta^{(k)}_0 r} \Omega_r[u](t,x) =  \frac{1}{(n-2)!!}\lim_{r\to 0} \frac{\widetilde{v}(r,t;x)}{ r} .
\end{align*} Our strategy consists in using \eqref{representation formula v tilde} in order to calculate the previous limit. We will consider separately the four addends that appear in \eqref{representation formula v tilde}. Fixed $t>0$, since we  will calculate the limit as $r\to 0$ we may assume without loss of generality that $r<t$, thus,
\begin{align*}
\frac{1}{r} \Big(\widetilde{\Omega}_{r+t}[u_0](x)+\widetilde{\Omega}_{r-t}[u_0](x)\Big) & = \frac{1}{r} \Big(\Omega_{r+t}[u_0](x)-\Omega_{t-r}[u_0](x)\Big) \xrightarrow [r\to 0] {} 2\frac{\partial}{\partial t} \, \Omega_{t}[u_0](x).
\end{align*}
 For the integral containing the kernel function $K_0$, we have
 \begin{align*}
\frac{1}{r} \int_{r-t}^{r+t} \widetilde{\Omega}_s[u_0](x) K_0(t,r;s;\mu,\nu^2)\, \mathrm{d}s & = \frac{1}{r} \int_{-t}^{t} \widetilde{\Omega}_{s+r}[u_0](x) K_0(t,r;s+r;\mu,\nu^2)\, \mathrm{d}s \\ &  = \frac{1}{r} \int_{0}^{t} \Big[ \widetilde{\Omega}_{s+r}[u_0](x) K_0(t,r;s+r;\mu,\nu^2)+\widetilde{\Omega}_{r-s}[u_0](x) K_0(t,r;-s+r;\mu,\nu^2)\Big]\, \mathrm{d}s  \\ &  =  \int_{0}^{t} \frac{1}{r} \Big[ \widetilde{\Omega}_{s+r}[u_0](x) +\widetilde{\Omega}_{r-s}[u_0](x)\Big] K_0(t,r;s+r;\mu,\nu^2)\, \mathrm{d}s,
 \end{align*} where in the last step we used that $K_0(t,r;s+r;\mu,\nu^2)$ is even with respect to $s$; this follows immediately from the fact that $E(t,r;b,s+r;\mu,\nu^2)$ is even with respect to $s$ and from the definition \eqref{def K0(t,x;y)}. Letting $r\to 0$ in the last expression we have
 \begin{align*}
& \frac{1}{r} \int_{r-t}^{r+t} \widetilde{\Omega}_s[u_0](x) K_0(t,r;s;\mu,\nu^2)\, \mathrm{d}s\\ \qquad  & =  \int_{0}^{t} \frac{1}{r} \Big[ \Omega_{s+r}[u_0](x) -\Omega_{s-r}[u_0](x)\Big] K_0(t,r;s+r;\mu,\nu^2)\, \mathrm{d}s \xrightarrow [r\to 0] {} 2 \int_{0}^{t}  \frac{\partial}{\partial s} \, \Omega_{s}[u_0](x) K_0(t,0;s;\mu,\nu^2)\, \mathrm{d}s.
 \end{align*} In an analogous way, $K_1(t,r;s+r;\mu,\nu^2)$ being an even function with respect to $s$, we get
 \begin{align*}
& \frac{1}{r} \int_{r-t}^{r+t} \widetilde{\Omega}_s[u_1+\mu u_0](x)K_1(t,r;s;\mu,\nu^2)\, \mathrm{d}s   
\xrightarrow [r\to 0] {} 2 \int_{0}^{t}  \frac{\partial}{\partial s} \, \Omega_{s}[u_1+\mu u_0](x) K_1(t,0;s;\mu,\nu^2)\, \mathrm{d}s.
 \end{align*} Finally, we consider the integral term involving the source term. It results
 \begin{align*}
 \frac{1}{r}  \int_0^t &\int_{r-t+b}^{r+t-b} \widetilde{\Omega}_s[f](b,x) E(t,r;b,s;\mu,\nu^2) \, \mathrm{d}s\, \mathrm{d}b = \frac{1}{r} \int_0^t \int_{-t+b}^{t-b} \widetilde{\Omega}_{s+r}[f](b,x) E(t,r;b,s+r;\mu,\nu^2) \, \mathrm{d}s\, \mathrm{d}b \\ 
 &= \frac{1}{r} \int_0^t \int_{0}^{t-b}\Big[ \widetilde{\Omega}_{s+r}[f](b,x) E(t,r;b,s+r;\mu,\nu^2)+\widetilde{\Omega}_{r-s}[f](b,x) E(t,r;b,-s+r;\mu,\nu^2)\Big] \, \mathrm{d}s\, \mathrm{d}b \\
 & = \frac{1}{r} \int_0^t \int_{0}^{t-b}\Big[ \widetilde{\Omega}_{s+r}[f](b,x) +\widetilde{\Omega}_{r-s}[f](b,x) \Big] E(t,r;b,s+r;\mu,\nu^2) \,\mathrm{d}s\, \mathrm{d}b,
 \end{align*} where in the last step we used the property $E(t,r;b,s+r;\mu,\nu^2)=E(t,r;b,-s+r;\mu,\nu^2)$. Consequently, letting $r\to 0$, we have
 \begin{align*}
  & \frac{1}{r}  \int_0^t \int_{r-t+b}^{r+t-b} \widetilde{\Omega}_s[f](b,x) E(t,r;b,s;\mu,\nu^2) \, \mathrm{d}s\, \mathrm{d}b  \\ 
  & =\int_0^t \int_{0}^{t-b}\!  \frac{1}{r} \Big[ \Omega_{s+r}[f](b,x) -\Omega_{s-r}[f](b,x) \Big] E(t,r;b,s+r;\mu,\nu^2) \, \mathrm{d}s\, \mathrm{d}b \xrightarrow [r\to 0] {} 2 \! \int_{0}^{t}  \! \int_{0}^{t-b}   \! \frac{\partial}{\partial s} \, \Omega_{s}[f](b,x) E(t,0;b,s;\mu,\nu^2) \, \mathrm{d}s\, \mathrm{d}b.
 \end{align*}
 Summarizing, we proved
 \begin{align}
 (n-2)!! \,  u(t,x)  = \lim_{r\to 0} \frac{\widetilde{v}(r,t;x)}{ r}  & = (1+t)^{-\frac{\mu}{2}} \frac{\partial}{\partial t} \, \Omega_{t}[u_0](x)+\frac{1}{2^{\sqrt{\delta}-1}}\int_{0}^{t}  \frac{\partial}{\partial s} \, \Omega_{s}[u_0](x) K_0(t,0;s;\mu,\nu^2)\, \mathrm{d}s \notag \\ 
& \quad +\frac{1}{2^{\sqrt{\delta}-1}}\int_{0}^{t}  \frac{\partial}{\partial s} \, \Omega_{s}[u_1+\mu u_0](x) K_1(t,0;s;\mu,\nu^2)\, \mathrm{d}s \notag \\ & \quad +\frac{1}{2^{\sqrt{\delta}-1}} \int_{0}^{t}  \int_{0}^{t-b}    \frac{\partial}{\partial s} \, \Omega_{s}[f](b,x) E(t,0;b,s;\mu,\nu^2) \, \mathrm{d}s\, \mathrm{d}b. \label{final eq case odd}
 \end{align} According to what we recall in the introduction, more precisely the representation given in \eqref{wave sol operator n odd}, we have
 \begin{align*}
 w[\varphi](t,x)= \frac{1}{(n-2)!!} \frac{\partial}{\partial t} \, \Omega_t[\varphi](x).
 \end{align*} So, \eqref{final eq case odd} implies easily \eqref{representation formula n dimensional odd case}.

\section{Even dimensional case: method of descent}  \label{Section n even}

In this section we prove Theorem \ref{Thm representation formula n dimensional even case}, by using the so-called \emph{method of descent}. Let us consider $u=u(t,x)$ solution of \eqref{inhomog CP} when $n\geqslant 2$ is an even integer. Then, we can consider formally $u$ as a function defined on $[0,\infty)\times \mathbb{R}^{n+1}$, by setting
\begin{align*}
\bar{u}(t,x,x_{n+1})\doteq  u(t,x) \qquad \mbox{for any} \ \ t\geqslant 0, (x,x_{n+1})\in \mathbb{R}^{n+1}.
\end{align*} Then, $\bar{u}$ solves
\begin{align}\label{Cauchy Problem descent}
\begin{cases} \bar{u}_{tt}-\sum_{j=1}^{n+1} \bar{u}_{x_jx_j} +\frac{\mu}{1+t}\bar{u}_t+\frac{\nu^2}{(1+t)^2}\bar{u}=\bar{f}(t,x,x_{n+1}), &  (x,x_{n+1})\in \mathbb{R}^{n+1}, \ t>0,\\
\bar{u}(0,x,x_{n+1})=\bar{u}_0(x,x_{n+1}), & (x,x_{n+1})\in \mathbb{R}^{n+1}, \\ \bar{u}_t(0,x,x_{n+1})=\bar{u}_1(x,x_{n+1}), & (x,x_{n+1})\in \mathbb{R}^{n+1},
\end{cases}
\end{align} where
\begin{align*}
\bar{u}_0(x,x_{n+1}) & \doteq u_0(x), \quad \bar{u}_1(x,x_{n+1}) \doteq  u_1(x) \qquad \mbox{for any} \ \  (x,x_{n+1})\in \mathbb{R}^{n+1}, \\
\bar{f}(t,x,x_{n+1})& \doteq  f(t,x)  \ \ \quad \qquad \qquad \qquad \qquad \qquad \mbox{for any} \ \ t\geqslant 0, (x,x_{n+1})\in \mathbb{R}^{n+1}.
\end{align*} Due to the fact that $n+1$ is an odd integer, we can use Theorem \ref{Thm representation formula n dimensional odd case} to get a representation formula for $\bar{u}$. Let us underline that $\bar{u}$ depends only formally on $x_{n+1}$, so we can consider without loss of generality the restriction of $\bar{u}$ on the hyperplane $\{x_{n+1}=0\}$. For the sake of readability we will denote by $\bar{B}_r(z)$ the ball around $z$ with radius $r$ in $\mathbb{R}^{n+1}$ and we will keep the usual notation for balls in $\mathbb{R}^n$. According to \eqref{representation formula n dimensional odd case} and \eqref{wave sol operator n odd}, we have
\begin{align}
u(t,x) & = \bar{u}(t,x,0)\notag  \\ &= \frac{1}{(n-1)!!} (1+t)^{-\frac{\mu}{2}}\bigg(\frac{\partial}{\partial t}\bigg) \bigg(\frac{1}{t}\frac{\partial}{\partial t}\bigg)^{\frac{n}{2}-1} \left( t^{n-1} \fint_{\partial \bar{B}_t(x,0)} \bar{u}_0(z,z_{n+1})\, \mathrm{d}\sigma_{(z,z_{n+1})} \right)\notag \\
& \quad +\frac{2^{1-\sqrt{\delta}}}{(n-1)!!}\int_0^t \bigg(\frac{\partial}{\partial s}\bigg) \bigg(\frac{1}{s}\frac{\partial}{\partial s}\bigg)^{\frac{n}{2}-1} \left( s^{n-1} \fint_{\partial \bar{B}_s(x,0)} \bar{u}_0(z,z_{n+1}) \, \mathrm{d}\sigma_{(z,z_{n+1})} \right) K_0(t,0;s; \mu,\nu^2) \, \mathrm{d}s \notag \\
& \quad +\frac{2^{1-\sqrt{\delta}}}{(n-1)!!}\int_0^t \bigg(\frac{\partial}{\partial s}\bigg) \bigg(\frac{1}{s}\frac{\partial}{\partial s}\bigg)^{\frac{n}{2}-1} \left( s^{n-1} \fint_{\partial \bar{B}_s(x,0)} \big(\bar{u}_1(z,z_{n+1})+\mu \bar{u}_0(z,z_{n+1})\big)\, \mathrm{d}\sigma_{(z,z_{n+1})} \right) K_1(t,0;s; \mu,\nu^2) \, \mathrm{d}s \notag \\
& \quad +\frac{2^{1-\sqrt{\delta}}}{(n-1)!!}\int_0^t \int_0^{t-b} \bigg(\frac{\partial}{\partial s}\bigg) \bigg(\frac{1}{s}\frac{\partial}{\partial s}\bigg)^{\frac{n}{2}-1} \left( s^{n-1} \fint_{\partial \bar{B}_s(x,0)} \bar{f}(b,z,z_{n+1})\, \mathrm{d}\sigma_{(z,z_{n+1})} \right) E(t,0;b,s; \mu,\nu^2) \, \mathrm{d}s \, \mathrm{d}b. \label{final eq case even}
\end{align} The next step is to rewrite the surface integrals in $\mathbb{R}^{n+1}$ as domain integrals in $\mathbb{R}^n$. We remark that $$\partial\bar{B}_r(x,0)=\left\{(y,y_{n+1})\in \mathbb{R}^{n+1}: y_{n+1}=\pm (r^2-|y-x|^2)^{1/2}\right\}.$$ Therefore, $\partial\bar{B}_r(x,0)\cap \{(y,y_{n+1}): y_{n+1}\geqslant 0\}$ is the graph of the function
\begin{align*}
\gamma: y\in B_r(x)\rightarrow \gamma(y) \doteq (r^2-|y-x|^2)^{1/2}
\end{align*} and, similarly, $\partial\bar{B}_r(x,0)\cap \{(y,y_{n+1}): y_{n+1}\leqslant 0\}$ is the graph of the function $-\gamma$. Since, $$\nabla \gamma (y)=- \frac{ (y-x)}{(r^2-|y-x|^2)^{1/2}},$$ if $\varphi$ is a function defined on $\mathbb{R}^n$ and $\bar{\varphi}$ denotes its trivial extension as a function of $n+1$ variables (we have in mind the cases in which  $\varphi$ is equal to $u_0,u_1$ or $f(t,\cdot)$), then,
\begin{align*}
r^{n-1}\fint_{\partial \bar{B}_r(x,0)} \bar{\varphi} (z,z_{n+1})\, \mathrm{d}\sigma_{(z,z_{n+1})} & = \frac{1}{\omega_n r}\int_{\partial \bar{B}_r(x,0)} \bar{\varphi} (z,z_{n+1}) \, \mathrm{d}\sigma_{(z,z_{n+1})} = \frac{2}{\omega_n r}\int_{B_r(x)} \bar{\varphi} (z,\gamma(z)) \sqrt{1+|\nabla \gamma(z)|^2}\, \mathrm{d}z \\ &= \frac{2}{\omega_n }\int_{B_r(x)} \frac{\varphi (z)}{(r^2-|y-x|^2)^{1/2}}\, \mathrm{d}z = \frac{2\omega_{n-1}}{\omega_n  n}r^n\fint_{B_r(x)} \frac{\varphi (z)}{(r^2-|y-x|^2)^{1/2}}\, \mathrm{d}z,
\end{align*} where  the factor $2$ in the second step is due to the fact that $\partial \bar{B}_r(x,0)$ consists of two hemispheres. It is well-known that the measure of the $(n-1)$-dimensional unit sphere of $\mathbb{R}^n$ is $$\omega_{n-1}=\frac{2 \pi^{\frac{n}{2}}}{\Gamma(\frac{n}{2})},$$ where $\Gamma$ is the Euler integral function of the second kind. Consequently, using the recursive relation $\Gamma(z+1)=z\Gamma(z)$ iteratively and the values $\Gamma(1)=1$, $\Gamma(\frac{1}{2})=\sqrt{\pi}$, we get
\begin{align*}
\frac{2\omega_{n-1}}{\omega_n  n} =\frac{2}{\sqrt{\pi} n} \frac{\Gamma(\frac{n+1}{2})}{\Gamma(\frac{n}{2})} =\frac{2}{\sqrt{\pi} n} \frac{\frac{n-1}{2}\cdot\frac{n-3}{2}\cdots \frac{1}{2}\cdot \Gamma(\frac{1}{2})}{\frac{n-2}{2}\cdot \frac{n-4}{2}\cdots \frac{2}{2}\cdot\Gamma(1)} =\frac{2}{\sqrt{\pi} n} \frac{2^{-\frac{n}{2}}(n-1)!!\sqrt{\pi}}{2^{-\frac{n}{2}+1}(n-2)!!} = \frac{(n-1)!!}{n!!}.
\end{align*} Therefore,
\begin{align*}
r^{n-1}\fint_{\partial \bar{B}_r(x,0)} \bar{\varphi} (z,z_{n+1})\, \mathrm{d}\sigma_{(z,z_{n+1})} & = \frac{(n-1)!!}{n!!} r^n\fint_{B_r(x)} \frac{\varphi (z)}{(r^2-|y-x|^2)^{1/2}}\, \mathrm{d}z.
\end{align*} Hence, applying the previous relation to \eqref{final eq case even}, we get finally
\begin{align*}
u(t,x) = \bar{u}(t,x,0) &= \frac{1}{n!!} (1+t)^{-\frac{\mu}{2}}\bigg(\frac{\partial}{\partial t}\bigg) \bigg(\frac{1}{t}\frac{\partial}{\partial t}\bigg)^{\frac{n}{2}-1} \left(t^n\fint_{B_t(x)} \frac{u_0(z)}{(t^2-|y-x|^2)^{1/2}}\, \mathrm{d}z  \right) \\
& \quad +\frac{2^{1-\sqrt{\delta}}}{n!!}\int_0^t \bigg(\frac{\partial}{\partial s}\bigg) \bigg(\frac{1}{s}\frac{\partial}{\partial s}\bigg)^{\frac{n}{2}-1} \left( s^n\fint_{B_s(x)} \frac{u_0(z)}{(s^2-|y-x|^2)^{1/2}}\, \mathrm{d}z\right) K_0(t,0;s; \mu,\nu^2) \, \mathrm{d}s  \\
& \quad +\frac{2^{1-\sqrt{\delta}}}{n!!}\int_0^t \bigg(\frac{\partial}{\partial s}\bigg) \bigg(\frac{1}{s}\frac{\partial}{\partial s}\bigg)^{\frac{n}{2}-1} \left( s^n\fint_{B_s(x)} \frac{u_1(z)+\mu \, u_0(z)}{(s^2-|y-x|^2)^{1/2}}\, \mathrm{d}z\right) K_1(t,0;s; \mu,\nu^2) \, \mathrm{d}s  \\
& \quad +\frac{2^{1-\sqrt{\delta}}}{n!!}\int_0^t \int_0^{t-b} \bigg(\frac{\partial}{\partial s}\bigg) \bigg(\frac{1}{s}\frac{\partial}{\partial s}\bigg)^{\frac{n}{2}-1} \left( s^n\fint_{B_s(x)} \frac{f (b,z)}{(s^2-|y-x|^2)^{1/2}}\, \mathrm{d}z\right) E(t,0;b,s; \mu,\nu^2) \, \mathrm{d}s \, \mathrm{d}b.
\end{align*} So, combining \eqref{representation formula n dimensional odd case} and \eqref{wave sol operator n even}, we proved Theorem \ref{Thm representation formula n dimensional even case}.

\section{Final remarks} \label{Section final rem}

In this section, we list some straightforward consequences of Theorems \ref{Thm representation formula n dimensional odd case} and \ref{Thm representation formula n dimensional even case} and some relations/connections of the representation formulae in \eqref{representation formula 1d case} and \eqref{representation formula n dimensional odd case} with representation formulae for other hyperbolic equations with time-dependent coefficients.

\paragraph{Loss of regularity}
First, we remark that in the multidimensional case $n\geq 2$ we have a loss of regularity for the solution of \eqref{inhomog CP} in comparison with the regularity of initial data, differently from the one-dimensional case. Indeed, according to Theorem \ref{Thm representation formula n dimensional odd case} in the odd dimensional case we have a loss of regularity of order $\frac{n-1}{2}$, while in the even dimensional case the loss of regularity has order $\frac{n}{2}$, according to Theorem \ref{Thm representation formula n dimensional even case}.

\paragraph{Domain of dependence} From \eqref{representation formula 1d case} and from \eqref{representation formula n dimensional odd case} (combined with \eqref{wave sol operator n odd} and \eqref{wave sol operator n even}) we see that  the domain of dependence  in the point $(t_0,x_0)\in [0,\infty)\times \mathbb{R}^n$ for the solution of \eqref{inhomog CP}  is 
\begin{align*}
\Omega(t_0,x_0)=\Big\{(t,x)\in [0,\infty)\times \mathbb{R}^n: t\in [0,t_0], \, |x-x_0|\leqslant (t_0-t) \Big\}.
\end{align*} In other words, $u(t_0,x_0)$ depends on the value of  $f$ in $\Omega(t_0,x_0)$ and the values of $u_0,u_1$ in $\Omega(t_0,x_0)\cap\{t=0\}$. So, in the case of scale-invariant models from the representation formulae that we proved in this work we found in a different way a property that is known to be true in a more general frame for hyperbolic models (see for example \cite[Theorem 2.2 in Chapter 1]{Sog08}).

\paragraph{Finite speed of propagation of perturbations} Of course, we may change our prospective and analyze how the initial data and the source term influence the behavior of the solution. Let us assume that $u_0,u_1$ are compactly supported  in $B_R(0)$ and that $\supp f \subset K_R\doteq \{(t,x)\in[0,\infty)\times \mathbb{R}^n: |x|\leq R+t\}$. Then, the solution itself has support contained in the forward conical domain $K_R$. This follows immediately by \eqref{representation formula 1d case} and \eqref{representation formula n dimensional odd case}. Indeed, in order to get not identically vanishing integrands in \eqref{representation formula 1d case} and \eqref{representation formula n dimensional odd case} or an actual influence of the traveling wave for $n=1$ or from the wave $(1+t)^{-\frac{\mu}{2}}w[u_0](t,x)$ for the multidimensional case, it must hold $(t,x)\in K_R$ necessarily.
So, we have shown the validity of the property of finite speed of propagation of perturbations with constant speed $1$ (also in this case the result is already known in the literature, e.g. \cite[Corollary 2.3 in Chapter 1]{Sog08}).

\paragraph{Huygens' principle} 
In general, we have seen the existence of a forward wave front in the case of compactly supported initial data and of source term supported in the conical domain correspondingly. However, in the case of a homogeneous problem ($f\equiv 0$) a backward wave front is not present generally, even in the odd dimensional case. If we denote
\begin{align*}
u^{\Huy}(t,x) & \doteq \begin{cases} \displaystyle{2^{-1}(1+t)^{-\frac{\mu}{2}}\big(u_0(x+t)+u_0(x-t)\big) } & \mbox{if} \ \ n=1, \\
 \displaystyle{(1+t)^{-\frac{\mu}{2}} w[u_0](t,x)} & \mbox{if} \ \ n\geq 2,
 \end{cases} \\
 u^{\nHuy}(t,x) & \doteq \begin{cases} \displaystyle{2^{-\sqrt{\delta}}\int_{x-t}^{x+t} u_0(y) K_0(t,x;y;\mu,\nu^2)\, \mathrm{d}y +2^{-\sqrt{\delta}}\int_{x-t}^{x+t}\big(u_1(y)+\mu \, u_0(y)\big) K_1(t,x;y;\mu,\nu^2)\, \mathrm{d}y } & \mbox{if} \ \ n=1, \\
\displaystyle{  2^{1-\sqrt{\delta}}\int_0^t w[u_0](s,x) K_0(t,0;s; \mu,\nu^2) \, \mathrm{d}s +2^{1-\sqrt{\delta}}\int_0^t w[u_1+\mu\,  u_0](s,x) K_1(t,0;s; \mu,\nu^2) \, \mathrm{d}s} & \mbox{if} \ \ n\geq 2,
 \end{cases} 
\end{align*} then, in the term $u^{\Huy}$ we have the existence of a backward wave front set in the odd dimensional case, that is, $$\supp u^{\Huy}\subset \{(t,x)\in[0,\infty)\times \mathbb{R}^n: t-R\leq |x|\leq t+R\},$$ while in general for $u^{\nHuy}$ this is not true. We said in general, as in some special cases the kernel functions $K_0$ and $K_1$ may have simplified expressions. For example, when $\mu,\nu^2$ satisfy the condition $\delta=1$, then, the expression of the kernel $E$ is simpler than the general case, namely,
$$E(t,x;b,y;\mu,\nu^2) = (1+t)^{-\frac{\mu}{2}}(1+b)^{\frac{\mu}{2}}.$$ Therefore, for $\delta=1$ and $n\geq 3$, $n$ odd we get
\begin{align*}
u^{\nHuy}(t,x) = \frac{1}{(n-2)!!} (1+t)^{-\frac{\mu}{2}} \bigg(\frac{1}{t}\frac{\partial}{\partial t}\bigg)^{\frac{n-3}{2}} \left(t^{n-2} \fint_{\partial B_t(x)} \left( u_1(z)+\tfrac{\mu}{2}u_0(z)\right) \, \mathrm{d}\sigma_z\right),
\end{align*} where we applied simply the fundamental theorem of calculus due to the facts that $w[\varphi](s,x)$ is the $s$-derivative of a certain function involving spherical means in \eqref{wave sol operator n odd} and $K_0(t,0;s;\mu,\nu^2)=-\frac{\mu}{2}(1+t)^{-\frac{\mu}{2}}$,  $K_1(t,0;s;\mu,\nu^2)=(1+t)^{-\frac{\mu}{2}}$ do not really depend on $s$. Also, when $\delta=1$ and $n\geq 3$ is odd, the term $u^{\nHuy}$ provides a backward wave front as well and, hence, Huygens' principle holds. Curiously, in the one dimensional case even in the very special case $\delta=1$ not only Huygens' principle but also the so-called \emph{incomplete Huygens' principle} fails.
The incomplete Huygens' principle, that was introduced in \cite{Yag13}, means the presence of a backward wave front for the homogeneous equation when the second data $u_1$ is identically 0. This is due to the presence of the integral terms in \eqref{representation formula 1d case} which do not cancel each others for $\delta=1$ even though $u_1=0$ and $f=0$. 


\paragraph{Connections with other hyperbolic models}
We point out now that the range for the parameters of Gauss' hypergeometric functions in \eqref{def E(t,x;b,y)} is somehow related to the range of the corresponding parameters for the representation formula of the solution to the Cauchy problem for the Klein-Gordon equation in the anti-de Sitter space-time with complex mass, namely,
\begin{align}\label{CP de Sitter}
\begin{cases}
w_{tt}-\mathrm{e}^{2t}\Delta w + M^2 w= g(t,x), & x\in \mathbb{R}^n, t>0, \\
w(0,x)= w_0(x), &  x\in \mathbb{R}^n, \\
w_t(0,x)= w_1(x), &  x\in \mathbb{R}^n, 
\end{cases}
\end{align} where $M\in \mathbb{C}$.
In fact, considering the change of variables $$1+t\doteq \mathrm{e}^\tau, \qquad \tau =\log (1+t)$$ and the transformation $$u(t,x)\doteq \mathrm{e}^{-\frac{\mu-1}{2}\tau}v(\tau,x),$$ we have that $u$ solves \eqref{inhomog CP} if and only if $v$ solves
\begin{align*}
\begin{cases} v_{\tau \tau}-\mathrm{e}^{2\tau}\Delta v -\frac{\delta}{4}v=\mathrm{e}^{\frac{\mu+3}{2}\tau}f(\mathrm{e}^\tau-1,x), &  x\in \mathbb{R}^n, \ \tau>0,\\
v(0,x)=u_0(x), & x\in \mathbb{R}^n, \\ v_\tau(0,x)=\frac{\mu-1}{2}u_0(x) +u_1(x), & x\in \mathbb{R}^n.
\end{cases}
\end{align*}

 In particular, the case $\delta=0$ corresponds to the massless case $M=0$ in \eqref{CP de Sitter}. So, it is not surprising to find $(\frac{1}{2},\frac{1}{2};1)$ as parameters in \eqref{def E(t,x;b,y)}, having in mind the corresponding representation formula for the solution of the wave equation in the anti-de Sitter space-time (cf. \cite[equations (1.2) and (1.6)]{YagGal08}).
 

On the one hand, for $\delta>0$ the Cauchy problem \eqref{inhomog CP} can be transformed in a Cauchy problem as in \eqref{CP de Sitter} with  an imaginary mass. Therefore, we find that $(\frac{1-\sqrt{\delta}}{2},\frac{1-\sqrt{\delta}}{2};1)$ are real parameter as in the corresponding representation for the solution of \eqref{CP de Sitter} (cf.  \cite[page 682]{Yag09}). According to \cite{PalThesis}, the case $\delta>0$ corresponds to the \emph{dominant damping case}. Thus, we have that the dominant damping case for the scale-invariant wave equation is related to the Klein-Gordon equation in the anti-de Sitter space-time with imaginary mass. On the other hand, the case $\delta<0$ (classified as \emph{Klein-Gordon type case} in \cite{PalThesis} for the scale-invariant model) is related in the same way to the Klein-Gordon equation in the anti-de Sitter space-time but now with positive mass. Hence, it is not surprising that in both cases we find an analogous situation for the parameters of the hypergeometric function: indeed, there exists a complex number with nontrivial imaginary part that appears in the hypergeometric function in the first two parameters. More precisely, these complex numbers are $\frac{1-i\sqrt{-\delta}}{2}$ for \eqref{inhomog CP} and $\frac{1}{2}+iM$ for \eqref{CP de Sitter}, cf. \cite[equation (0.20)]{YagGal09}.

However, Klein-Gordon equation in the anti-de Sitter space-time (or de Sitter space-time if we consider the backwards Cauchy problem) is not the only equation which is related to \eqref{inhomog CP}. Besides the previous case, we may consider a different change of variables and transformation of the dependent variable in the case $\delta\in (0,1]$, namely,
\begin{align*}
1+t\doteq (1+\tau)^{\ell+1}, \ \  x\doteq (\ell+1)y \ \ \mbox{and} \ \ v(\tau,y)\doteq (1+t)^{\frac{\mu-1+\sqrt{\delta}}{2}}u(t,x),
\end{align*} where $\ell\doteq \frac{1-\sqrt{\delta}}{\sqrt{\delta}}$. Then, $u$ solves \eqref{inhomog CP} if and only if $v$ solves
\begin{align*}
\begin{cases} v_{\tau \tau}-(1+\tau)^{2\ell}\Delta_y v=\frac{1}{\delta }(1+\tau)^{\frac{\mu-1+\sqrt{\delta}}{2}(\ell+1)+2\ell}f((1+\tau)^{\ell+1}-1,(\ell+1)y), &  y\in \mathbb{R}^n, \ \tau>0,\\
v(0,y)=u_0(y), & y\in \mathbb{R}^n, \\ v_\tau(0,y)=\frac{\mu-1+\sqrt{\delta}}{2}(\ell+1)u_0((\ell+1)y) +(\ell+1)u_1((\ell+1)y), &y\in \mathbb{R}^n.
\end{cases}
\end{align*} Employing the representation formula given in \cite{PalRei18} for the solution of the Cauchy problem 
\begin{align}\label{CP Tricomi}
\begin{cases}
w_{tt}-(1+t)^{2\ell}\Delta w = g(t,x), & x\in \mathbb{R}^n, t>0, \\
w(0,x)= w_0(x), &  x\in \mathbb{R}^n, \\
w_t(0,x)= w_1(x), &  x\in \mathbb{R}^n, 
\end{cases}
\end{align} it is possible to find the representation formula for \eqref{inhomog CP} in the one-dimensional case. In turn, the representation formula for \eqref{CP Tricomi} in the case $n=1$ is obtained in \cite[Section 4]{PalRei18} by following the works \cite{Yag04,Yag15} on the generalized Tricomi equation (Gellerstedt equation). For a summary overview on \emph{Yagdjian's Integral Transform approach} applied to several hyperbolic equations with variable coefficients, one can see also \cite{Yag10}.

\paragraph{Future applications of the representation formulae}

In the forthcoming paper \cite{Pal19scalder}, the representation formulae which are derived in this work will be applied to study the blow-up dynamic of the semilinear wave equation with damping and mass terms in the scale-invariant case and with nonlinearity of derivative type $|\partial_t u|^p$. 

\section*{Acknowledgments}

This work is supported by the University of Pisa, Project PRA 2018 49. The  author is member of the Gruppo Nazionale per L'Analisi Matematica, la Probabilit\`{a} e le loro Applicazioni (GNAMPA) of the Instituto Nazionale di Alta Matematica (INdAM). The results in this paper have been inspired by the series of seminars \emph{Integral Transform Approach to Wave and Klein-Gordon Equations in the de Sitter space-time} held by Karen Yagdjian (UTRGV, Edinburg, Texas) during the trimester June-August 2016 at the Institute of Applied Analysis of TU Bergakademie Freiberg. Moreover, the author thanks Karen Yagdjian for his useful comments and suggestions in preparing the final version.

\addcontentsline{toc}{chapter}{Bibliography}


\begin{thebibliography}{00}



\bibitem{ChenPal18} W. Chen, A. Palmieri,
\newblock{Weakly coupled system of semilinear wave equations with distinct scale-invariant terms in the linear part,}
\newblock{preprint, arXiv:1809:10103v1, 2018.}

\bibitem{Abb15}   M. D'Abbicco,
\newblock {The threshold of effective damping for semilinear wave equations,}
\newblock {\em Math. Methods Appl. Sci.} {\bf 38}(6) (2015), 1032--1045.

\bibitem{DabbLuc15} M. D'Abbicco,  S. Lucente, 
\newblock{NLWE with a special scale invariant damping in odd space dimension,}
\newblock{\em Discrete Contin. Dyn. Syst.}
\newblock{Dynamical systems, differential equations and applications.} 10th AIMS Conference. Suppl. (2015), 312--319, doi: 10.3934/proc.2015.0312.

\bibitem{DLR15}  M. D'Abbicco, S. Lucente, M. Reissig, 
\newblock {A shift in the Strauss exponent for semilinear wave equations with a not effective damping,}
\newblock {\em J. Differential Equations} {\bf 259}(10) (2015), 5040--5073.

\bibitem{DabbPal18} M. D'Abbicco, A. Palmieri,
\newblock{$L^p-L^q$ estimates on the conjugate line for semilinear critical dissipative Klein-Gordon equations,}
\newblock{preprint.}

\bibitem{Evans} L. C. Evans, \emph{Partial Differential Equations}, AMS, Graduate Studies in Mathematics Vol. 19, First Edition, 1998.

\bibitem{IS17} M. Ikeda, M. Sobajima,
\newblock{Life-span of solutions to semilinear wave equation with time-dependent critical damping for specially localized initial data,}
\newblock{\em Math. Ann.} (2018),   https://doi.org/10.1007/s00208-018-1664-1.

\bibitem{John55} F. John, \emph{ Plane waves and spherical means applied to partial differential equations}, Interscience Publishers, New York-London, 1955.

\bibitem{KatoSak18} M. Kato, M. Sakuraba,
\newblock{Global existence and blow-up for semilinear damped wave equations in three space dimensions,}
\newblock{preprint, arXiv:1807.04327v1, 2018.} 

\bibitem{LTW17} N. A. Lai, H. Takamura, K. Wakasa, 
\newblock{Blow-up for semilinear wave equations with the scale invariant damping and super-Fujita exponent,}
\newblock{\em J. Differential Equations} {\bf 263}(9) (2017), 5377--5394.

\bibitem{MPbook} E. Mitidieri , S. I. Pohozaev, 
\newblock{A priori estimates and the absence of solutions of nonlinear partial differential equations and inequalities,} \newblock{\em  Proc. Steklov Inst. Math.} {\bf 234} (2001), 1-362. 

\bibitem{NPR16} W. Nunes do Nascimento, A. Palmieri, M. Reissig, 
\newblock{Semi-linear wave models with power non-linearity and scale-invariant time-dependent mass and dissipation,}
\newblock{\em Math. Nachr.}  {\bf 290}(11/12) (2017), 1779--1805.

\bibitem{Pal17} A. Palmieri, 
\newblock{Global existence of solutions for semi-linear wave equation with scale-invariant damping and mass in exponentially weighted spaces,}
\newblock{\em J. Math. Anal. Appl.} {\bf 461}(2) (2018), 1215--1240.

\bibitem{Pal18odd}  A. Palmieri,
\newblock{Global existence results for a semilinear wave equation with scale-invariant damping and mass in odd space dimension,} 
\newblock{ in M. D’Abbicco et al. (eds.),} \newblock{\em New Tools for Nonlinear PDEs and Application}, \newblock{Trends in Mathematics,}  https://doi.org/10.1007/978-3-030-10937-0$\_$12
	
\bibitem{Pal18even} A. Palmieri,
\newblock{A global existence result for a semilinear wave equation with scale-invariant damping and mass in even space dimension,} \newblock{preprint, arXiv:1804.03978v1, 2018.}

\bibitem{PalThesis} A. Palmieri,
\emph{\em Global in time existence and blow-up results for a semilinear wave equation with scale-invariant damping and mass},
\newblock{PhD thesis, TU Bergakademie Freiberg,} 2018, 279 pp.

\bibitem{Pal19} A. Palmieri,
\newblock{A note on a conjecture for the critical curve of a weakly coupled system of semilinear wave equations with scale-invariant lower order terms,}  preprint, arXiv:1812.06588v1, 2018.



\bibitem{PalRei17}  A. Palmieri, M. Reissig,
\newblock{Semi-linear wave models with power non-linearity and
scale-invariant time-dependent mass and dissipation, II,}
\newblock{\em Math. Nachr.}, {\bf 291}(11/12) (2018), 1859--1892.

\bibitem{PalRei18} A. Palmieri, M. Reissig,
\newblock{A competition between Fujita and Strauss type exponents for blow-up of semi-linear wave equations with scale-invariant damping and mass,}
\newblock{\em J. Differential Equations} {\bf 266 } (2019), 1176-1220.

\bibitem{PT18} A. Palmieri, Z. Tu,
\newblock{Lifespan of semilinear wave equation with scale invariant dissipation and mass and sub-Strauss power nonlinearity,} \newblock{\em J. Math. Anal. Appl.} (2018), https://doi: 10.1016/j.jmaa.2018.10.015.

\bibitem{Pal19scalder}  A. Palmieri, Z. Tu,
\newblock{A blow-up result for a semilinear wave equation with scale-invariant damping and mass and nonlinearity of derivative type,} in preparation.

\bibitem{Sog08} C. D. Sogge, \emph{ Lectures on non-linear wave equations. Second edition}, International Press, Boston, MA, 2008.

\bibitem{TL1709} Z. Tu, J. Lin, 
\newblock{A note on the blowup of scale invariant damping wave equation with sub-Strauss exponent,} 
\newblock{preprint, arXiv:1709.00866v2, 2017.} 
	
	
\bibitem{Wak16} K. Wakasa,
\newblock{The lifespan of solutions to semilinear damped wave equations in one space dimension,}
\newblock{\em Commun. Pure Appl. Anal.} {\bf 15}(4) (2016), 1265-1283.  

\bibitem{Wak14} Y. Wakasugi,
\newblock{Critical exponent for the semilinear wave equation with scale invariant damping,}
\newblock{in: M. Ruzhansky , V. Turunen  (Eds.) Fourier Analysis. Trends in Mathematics. Birkh\"auser, Cham} (2014), https://doi.org/10.1007/978-3-319-02550-6$\_$19.

\bibitem{Yag04} K. Yagdjian,
\newblock{ A note on the fundamental solution for the Tricomi-type equation in the hyperbolic domain,}
\newblock{\em  J. Differential Equations} {\bf 206} (2004), 227-252.


\bibitem{Yag07} K. Yagdjian,
\newblock{The self-similar solutions of the Tricomi-type equations,}
\newblock{\em Z. Angew. Math. Phys.} {\bf 58} (2007), 612-645.

\bibitem{Yag09} K. Yagdjian,
\newblock{ The semilinear Klein-Gordon equation in de Sitter spacetime,}
\newblock{\em Discrete Contin. Dyn. Syst. Ser. S} {\bf 2}(3)  (2009), 679–696.


\bibitem{Yag10} K. Yagdjian,
\newblock{ Fundamental solutions for hyperbolic operators with variable coefficients,}
\newblock{\em Rend. Istit. Mat. Univ. Trieste} {\bf 42} (2010), suppl., 221-243.

\bibitem{Yag13} K. Yagdjian,
\newblock{ Huygens' principle for the Klein-Gordon equation in the de Sitter spacetime,}
 \newblock{\em J. Math. Phys.} {\bf 54}(9) (2013), 091503, 18 pp.

\bibitem{Yag15} K. Yagdjian,
\newblock{Integral transform approach to generalized Tricomi equations,}
\newblock{\em  J. Differential Equations} {\bf 259}(11) (2015),5927-5981.

\bibitem{Yag15MN} K. Yagdjian,
\newblock{Integral transform approach to solving Klein–Gordon equation with variable coefficients,}
\newblock{\em Math. Nachr.} {\bf 288}(17/18) (2015), 2129–2152  


\bibitem{YagGal08} K. Yagdjian, A. Galstian,
\newblock{ Fundamental solutions of the wave equation in Robertson-Walker spaces,}
\newblock{\em J. Math. Anal. Appl.} {\bf 346}(2) (2008),  501-520.

\bibitem{YagGal09} K. Yagdjian, A. Galstian,
\newblock{ Fundamental Solutions for the Klein-Gordon Equation in de Sitter Spacetime,}
\newblock{\em  Comm. Math. Phys.} {\bf 285} (2009), 293-344.


\end{thebibliography}
\end{document}